\newcommand{\bbN}{{\mathbb{N}}}
\newcommand{\bbR}{{\mathbb{R}}}
\newcommand{\bbZ}{{\mathbb{Z}}}
\newcommand{\bbC}{{\mathbb{C}}}
\newcommand{\cB}{{\mathcal B}}
\newcommand{\cE}{{\mathcal E}}
\newcommand{\cH}{{\mathcal H}}
\newcommand{\cM}{{\mathcal M}}
\newcommand{\cR}{{\mathcal R}}
\newcommand{\gC}{{\mathfrak{C}}}
\newcommand{\no}{\notag}
\newcommand{\lb}{\label}
\newcommand{\bi}{\bibitem}
\newcommand{\ol}{\overline}
\newcommand{\wti}{\widetilde}
\newcommand{\Oh}{O}
\newcommand{\oh}{o}
\newcommand{\f}{\frac}
\renewcommand{\Re}{\mathop\mathrm{Re}}
\renewcommand{\Im}{\mathop\mathrm{Im}}
\newcommand{\dom}{\text{\rm{dom}}}
\newcommand{\ran}{\text{\rm{ran}}}
\DeclareMathOperator{\tr}{tr}
\allowdisplaybreaks \numberwithin{equation}{section}
\newtheorem{theorem}{Theorem}[section]
\newtheorem{lemma}[theorem]{Lemma}
\newtheorem{hypothesis}[theorem]{Hypothesis}
\theoremstyle{remark}
\newtheorem{remark}[theorem]{Remark}
\begin{document}

\title[The Damped String Problem]{The Damped String Problem Revisited}

\author[F.\ Gesztesy]{Fritz Gesztesy}
\address{Department of Mathematics,
University of Missouri, Columbia, MO 65211, USA}
\email{gesztesyf@missouri.edu}
\urladdr{http://www.math.missouri.edu/personnel/faculty/gesztesyf.html}

\author[H.\ Holden]{Helge Holden}
\address{Department of Mathematical Sciences,
Norwegian University of
Science and Technology, NO--7491 Trondheim, Norway}
\email{holden@math.ntnu.no}
\urladdr{http://www.math.ntnu.no/~holden/}
\thanks{Supported in part by the Research Council of Norway.}
\date{\today}
\subjclass[2010]{Primary 35J25, 35J40, 47A05; Secondary 47A10, 47F05.}
\keywords{Damped string, supersymmetry.}

\begin{abstract}
We revisit the damped string equation on a compact interval with a variety of boundary conditions and derive an infinite sequence of trace formulas associated with it, employing methods familiar from supersymmetric quantum mechanics. 
We also derive completeness and Riesz basis results (with parentheses) for the associated root functions under less smoothness assumptions on the coefficients than usual, using operator theoretic methods (rather than detailed eigenvalue and root function asymptotics) only.  
\end{abstract}

\maketitle


\section{Introduction}
\label{s1}

We reconsider the damped one-dimensional wave equation
\begin{align}
& \rho(x)^2 u_{tt}(x,t) - u_{xx}(x,t) + \alpha(x) u_t(x,t) = 0, \quad 
(x,t)\in (0,1) \times [0,\infty),    \no  \\
& u(\cdot,0) = f_0, \; u_t (\cdot,0) = f_1,    \lb{1.1} \\
& u(\cdot,t), \; t \in\bbR, \, \text{ satisfies certain boundary conditions at 
$x=0$ and $x=1$,}   \no
\end{align}
assuming
\begin{equation}
0 < \rho, \rho^{-1} \in L^{\infty} ([0,1]; dx), \quad \alpha \in L^{\infty} ([0,1]; dx), \; 
\text{$\alpha$ real-valued},   \lb{1.2}
\end{equation}
and appropriately choosing $f_j \in L^2 ([0,1]; \rho^2 dx)$, $j=0,1$. Suitable 
boundary conditions for $u(\cdot,t)$, $t \geq 0$, in \eqref{1.1} at $x=0$ and 
$x=1$ studied in this paper are, for instance, Dirichlet, Neumann, (anti)periodic boundary conditions, etc. (we refer to Section \ref{s3} for details).  

We note that the seemingly more general equation with variable 
speed $c(\cdot)>0$, 
\begin{equation}
\rho(x)^2 u_{tt}(x,t) - c(x)^2 u_{xx}(x,t) + \alpha(x) u_t(x,t) = 0, 
\quad (x,t)\in (0,1) \times [0,\infty),  \lb{1.3}
\end{equation}
subordinates to the case described in \eqref{1.1} as long as 
\begin{equation}
0 < c, c^{-1} \in L^{\infty} ([0,1]; dx),    \lb{1.4}
\end{equation}
replacing $\rho$ by $\rho/c^2$ and $\alpha$ by $\alpha/c^2$ in \eqref{1.1}.

Our interest in this topic and the principal motivation for writing this paper 
has its origin in a question posed by Steve Cox in September of 2008: He 
had derived the following trace formula 
(in the case $\rho =1$) 
\begin{equation}
{\tr}_{L^2([0,1]; dx)}\big(\alpha ((- d^2/dx^2)_D)^{-1}\big) 
= \int_0^1 dx \, x(1-x) \alpha(x),    \lb{1.5} 
\end{equation}
in the case where $u(\cdot,t)$, $t \geq 0$, in \eqref{1.1} satisfies Dirichlet 
boundary conditions at $x=0,1$ (in particular, the operator $(- d^2/dx^2)_D$ 
in \eqref{1.5} denotes the Dirichlet Laplacian in $L^2([0,1]; dx)$). Steve Cox 
then posed the question whether there actually 
exists an infinite sequence of such trace formulas in analogy to the well-known sequences of trace formulas for completely integrable evolution equations of soliton-type (e.g., the Korteweg--de Vries (KdV) equation). This question will 
indeed be answered affirmatively in Theorem \ref{t5.4} in the general case 
where $\rho$ is nonconstant and for a variety of boundary conditions at 
$x=0,1$. 

We note that the area of damped wave equations remains incredibly  
active up to this day. Since we cannot possibly describe the recent developments 
in detail in this paper, we refer, for instance, to \cite{AL03}, \cite{BRT82}, 
\cite{BI88}, \cite{BIW95}, \cite{BR00}, \cite{BF09}, \cite{CR82}, \cite{CT89}, 
\cite{CFNQ90}, \cite{CE10}, \cite{CH08}, \cite{CK96}, \cite{CO96}, 
\cite{CZ94}, \cite{CZ95}, \cite{En92}, \cite{En94}, \cite{Fa90}, \cite{Fr96}, 
\cite{Fr99}, \cite{FZ96}, \cite{GGHT10}, \cite{GS03}, \cite{Hi03}, \cite{HS99}, 
\cite{HS04}, \cite{Hu88}, \cite{Hu97}, 
\cite{JT07}, \cite{JT09}, \cite{JTW08}, \cite{Lo97}, \cite{PBR04}, \cite{Ph09}, 
\cite{Pi97}, \cite{Pi98}, \cite{Pi99}, 
\cite{RTY09}, \cite{Sh96a}, \cite{Sh97}, \cite{Sh97a}, \cite{Sh99}, \cite{Sh01}, 
\cite{SMDB97}, \cite{Sj00}, \cite{Tr09}, \cite{Ve04}, and the references therein, which lead the 
interested reader into a variety of directions.   

The traditional semigroup approach to the damped (abstract) wave 
equations \eqref{1.1} (cf., e.g., \cite[Sect.\ VI.3]{EN00}, \cite[Ch.\ VIII]{Fa85}) consists of rewriting it as a first-order system of the type 
\begin{align}
\begin{split}
& \begin{pmatrix} u(\cdot,t) \\[1mm] u_t(\cdot,t) \end{pmatrix}_t = 
\begin{pmatrix} 0 & I_{L^2([0,1]; \rho^2 dx)}  \\[1mm] 
- \rho^{-2} (-d^2/dx^2)_{\rm bc} & - \alpha \rho^{-2} 
\end{pmatrix}
\begin{pmatrix} u(\cdot,t) \\[1mm] u_t(\cdot,t) \end{pmatrix},     \lb{1.6} \\[1mm] 
& \begin{pmatrix} u(\cdot,0) \\ u_t (\cdot,0) \end{pmatrix} 
= \begin{pmatrix} f_0 \\ f_1 \end{pmatrix}, \quad t \geq 0,   
\end{split}
\end{align} 
where $\rho^{-2} (-d^2/dx^2)_{bc} \geq 0$ in $L^2([0,1]; \rho^2 dx)$ is of the 
type
\begin{equation}
\rho^{-2} (-d^2/dx^2)_{\rm bc} = T_{\rm bc}^*  T_{\rm bc}^{}     \lb{1.7}
\end{equation} 
with 
\begin{equation}
T_{\rm bc}^{}  = i \rho^{-1} (d/dx)_{bc}    \lb{1.8}
\end{equation} 
in $L^2([0,1]; \rho^2 dx)$, and the subscript ``bc'' represents appropriate 
boundary conditions at $x=0,1$ (Dirichlet, Neumann, (anti)periodic, etc.) 
to be detailed in Section \ref{s3}.

However, we will not be working with the generator 
\begin{align}
\begin{split} 
& i G_{T_{\rm bc}^{} ,\alpha/\rho^2} = \begin{pmatrix} 0 & I_{L^2([0,1]; \rho^2 dx)}  
\\[1mm]  
- \rho^{-2} (-d^2/dx^2)_{\rm bc} & - \alpha \rho^{-2} \end{pmatrix},    \lb{1.9} \\
& \dom(G_{T_{\rm bc}^{} ,\alpha/\rho^2}) = \dom(T_{\rm bc}^*  T_{\rm bc}^{}) \oplus 
\dom(T_{\rm bc}^{} ), 
\end{split} 
\end{align}
in the Hilbert space $\cH_{T_{\rm bc}^{} } \oplus L^2([0,1]; \rho^2 dx)$ associated with \eqref{1.6}, where 
\begin{align}
\begin{split} 
& \cH_{T_{\rm bc}^{} } = (\dom(T_{\rm bc}^{} ), (\cdot,\cdot)_{\cH_{T_{\rm bc}^{} }}),   \lb{1.10} \\
& (u,v)_{\cH_{T_{\rm bc}^{} }} = (T_{\rm bc}^{}  u, T_{\rm bc}^{}  v)_{L^2([0,1]; \rho^2 dx)},  \; 
u, v \in \dom(T_{\rm bc}^{} ).
\end{split}
\end{align} 
Instead, we will put the principal focus on the Dirac-type operator 
\begin{align}
\begin{split}
& D_{\rm bc} + B = \begin{pmatrix} 0 & T_{\rm bc}^*  \\[1mm] T_{\rm bc}^{}  & 0 \end{pmatrix} 
+ \f{i \alpha}{\rho^2} \begin{pmatrix} I_{L^2([0,1]; \rho^2 dx)} & 0 \\[1mm] 0 & 0 
\end{pmatrix},   \lb{1.11} \\[1mm] 
&\dom(D_{\rm bc} + B) = \dom(D_{\rm bc}) = \dom(T_{\rm bc}^{} ) \oplus \dom(T_{\rm bc}^* )    
\end{split}
\end{align}
in the Hilbert space $L^2([0,1]; \rho^2 dx)^2$ by employing the fact that 
$ i G_{T_{\rm bc}^{} ,\alpha/\rho^2}$ in $\cH_{T_{\rm bc}^{} } \oplus L^2([0,1]; \rho^2 dx)$ 
is unitarily equivalent to 
$(D_{\rm bc} + B)(I_{L^2([0,1]; \rho^2 dx)} \oplus P_{\ran(T_{\rm bc}^{} )})$ in  
$L^2([0,1]; \rho^2 dx)^2$ (cf.\ Theorem \ref{t2.4} for details), as recently 
proven in \cite{GGHT10} in the context of abstract wave equations with 
a damping term. 

Working with $D_{\rm bc} + B$ rather than using $G_{T_{\rm bc}^{} ,\alpha/\rho^2}$ has 
two distinct advantages: First, the unperturbed Dirac operator 
\begin{equation}
D_{\rm bc} = \begin{pmatrix} 0 & T_{\rm bc}^*  \\[1mm] T_{\rm bc}^{}  & 0 \end{pmatrix} 
\lb{1.12}
\end{equation}
in $L^2([0,1]; \rho^2 dx)^2$ is self-adjoint and of a supersymmetric nature 
(cf.\ Appendix \ref{sA} for details) which permits its spectral analysis in terms 
of the operator $T_{\rm bc}^*  T_{\rm bc}^{}  \geq 0$ (resp., $T_{\rm bc}^{}  T_{\rm bc}^*  \geq 0$), 
and second, the non-self-adjoint term $B$ is represented in terms of a simple 
diagonal $2\times 2$ block operator in $L^2([0,1]; \rho^2 dx)^2$. 

Assuming \eqref{1.1} and choosing $\zeta\in\bbR\backslash\{0\}$ with 
$|\zeta|$ sufficiently small, such that $\zeta \in \rho(D_{\rm bc} + B)$, we prove in 
Theorem \ref{t5.4} that  
\begin{align}
& {\tr}_{L^2([0,1]; \rho^2 dx)^2}\big(\Im \big[(D_{\rm bc} + B - \zeta I_2)^{-1}\big]\big)  \no \\
& \quad = \Im\big[{\tr}_{L^2([0,1]; \rho^2 dx)}
\big(\big(2 \zeta + i \big(\alpha/\rho^2\big)\big) 
(T_{\rm bc}^*  T_{\rm bc}^{}  - \zeta^2 I - \zeta i (\alpha/\rho^2))^{-1}\big)\big]     \no \\
& \quad \; = \sum_{m=0}^\infty t_{{\rm bc}, 2m} \, \zeta^{2m},    \lb{1.13} 
\end{align}
where
\begin{align}
t_{{\rm bc},0} &= {\tr}_{L^2([0,1]; \rho^2 dx)}\big((\alpha/\rho^2) (T_{\rm bc}^*  T_{\rm bc}^{} )^{-1}\big),   \lb{1.14} \\
t_{{\rm bc},2} &= - {\tr}_{L^2([0,1]; \rho^2 dx)}\big(\big(\alpha/\rho^2\big)
(T_{\rm bc}^*  T_{\rm bc}^{} )^{-1} 
\big(\alpha/\rho^2\big) (T_{\rm bc}^*  T_{\rm bc}^{} )^{-1} 
\big(\alpha/\rho^2\big) (T_{\rm bc}^*  T_{\rm bc}^{} )^{-1}\big)   \no \\
& \quad + 3 {\tr}_{L^2([0,1]; \rho^2 dx)}\big(\big(\alpha/\rho^2\big)
(T_{\rm bc}^*  T_{\rm bc}^{} )^{-2}\big),    \lb{1.15}  \\
& \hspace*{-3mm} \text{ etc.,}    \no 
\end{align}
and explicit boundary condition (bc) dependent expressions for $t_{{\rm bc},0}$ 
are listed in \eqref{5.11a}--\eqref{5.11d}. The following infinite sequence of 
trace formulas, our principal new result, is proved in Theorem \ref{t5.8}:
\begin{equation}
\sum_{j\in J} \f{\Im\big(\lambda_j(D_{\rm bc} + B)^{m+1}\big)}
{|\lambda_j(D_{\rm bc} + B)|^{2(m+1)}} 
= \begin{cases} -  t_{{\rm bc},2 n}, & m=2n, \\ 0, & m=2n+1,  \end{cases}  
\quad n \in \bbN_0.   \lb{1.16}
\end{equation}
Explicitly, one obtains for $m=0,1$ in \eqref{1.16}, 
\begin{align}
& \sum_{j\in J} \f{\Im(\lambda_j(D_{\rm bc} + B))}{|\lambda_j(D_{\rm bc} + B)|^2} 
= - t_{{\rm bc},0} = 
- {\tr}_{L^2([0,1]; \rho^2 dx)}\big((\alpha/\rho^2) (T_{\rm bc}^*  T_{\rm bc}^{} )^{-1}\big),     
\lb{1.17} \\[1mm]
& \sum_{j\in J} \f{\Im(\lambda_j(D_{\rm bc} + B)) 
\Re(\lambda_j(D_{\rm bc} + B))}{|\lambda_j(D_{\rm bc} + B)|^4} = 0,     \lb{1.18} \\
& \quad  \text{ etc.}    \no 
\end{align}

In Section \ref{s2} we succinctly consider abstract damped wave equations and 
detail the intimate spectral connections between (abstract versions of) 
$G_{T_{\rm bc}^{} ,\alpha/\rho^2}$ and $D_{\rm bc} + B$, a topic discussed in depth 
in \cite{GGHT10}. Section \ref{s3} is devoted to the self-adjoint supersymmetric 
Dirac-type operator $D_{\rm bc} + B$ in the Hilbert space $L^2([0,1]; \rho^2 dx)^2$ 
and the associated operators $T_{\rm bc}^{} $. In particular, we describe in detail the boundary conditions chosen at $x=0,1$ and the Green's function corresponding 
to $T_{\rm bc}^*  T_{\rm bc}^{} $, that is, the integral kernel of the resolvent 
$(T_{\rm bc}^*  T_{\rm bc}^{}  - z I)^{-1}$. In Section \ref{s4} we derive 
an explicit formula for (an abstract version of) the resolvent of $D_{\rm bc} + B$,  
employing the well-known expression of the resolvent of the supersymmetric 
Dirc-type operator $D_{\rm bc}$. Section \ref{s5} focuses on the infinite sequence 
of trace formulas for the damped string equation (cf.\  \eqref{1.13}--\eqref{1.18}), 
using several well-known results in the spectral theory of non-self-adjoint 
operators associated with the names of Schur, Livsic, and Keldysh. In addition, 
based on abstract results on the existence of a Riesz basis with parentheses for 
a certain class of non-self-adjoint perturbations of a normal operator due to 
Katsnelson \cite{Ka67a}, \cite{Ka67b}, Markus \cite{Ma62}, and Markus and Matsaev \cite{MM81}, \cite{MM84}, we show that $D_{\rm bc} + B$ possesses a Riesz basis with parentheses in $L^2([0,1]; \rho^2 dx)^2$ (cf.\ 
Theorem \ref{t5.12}). A host of useful (spectral) properties of abstract self-adjoint supersymmetric Dirac-type operators is collected in Appendix \ref{sA}. 

Finally, we briefly summarize some of the notation used in this paper: Let $\cH$ be a separable complex Hilbert space, $(\cdot,\cdot)_{\cH}$ the scalar product in $\cH$ (linear in the second factor), and $I_{\cH}$ the identity operator in $\cH$.
Next, let $T$ be a linear operator mapping (a subspace of) a
Banach space into another, with $\dom(T)$, $\ran(T)$, and $\ker(T)$ denoting the
domain, range, and kernel (i.e., null space) of $T$. The closure of a closable 
operator $S$ is denoted by $\ol S$. The spectrum, essential spectrum, point spectrum, discrete spectrum, and resolvent set of a closed linear operator in $\cH$ will be denoted by 
$\sigma(\cdot)$, $\sigma_{\rm ess}(\cdot)$, $\sigma_{\rm p}(\cdot)$, $\sigma_{\rm d}(\cdot)$, and 
$\rho(\cdot)$, respectively. The
Banach spaces of bounded and compact linear operators in $\cH$ are
denoted by $\cB(\cH)$ and $\cB_\infty(\cH)$, respectively. Similarly,
the Schatten--von Neumann (trace) ideals will subsequently be denoted
by $\cB_p(\cH)$, $p\in (0,\infty)$. Analogous notation $\cB(\cH_1,\cH_2)$,
$\cB_\infty (\cH_1,\cH_2)$, etc., will be used for bounded, compact,
etc., operators between two Hilbert spaces $\cH_1$ and $\cH_2$. In
addition, ${\tr}_{\cH}(T)$ denotes the trace of a trace class operator
$T\in\cB_1(\cH)$ and ${\det}_{\cH,k}(I_{\cH}+S)$ represents the (modified)
Fredholm determinant associated with an operator $S\in\cB_k(\cH)$,
$k\in\bbN$ (for $k=1$ we omit the subscript $1$ in $\det_{\cH}(\cdot)$). 
Finally, $P_{\cM}$ denotes the orthogonal projection onto a closed, linear 
subspace $\cM$ of $\cH$, $\dot +$ denotes the direct (not necessarily 
orthogonal) sum in $\cH$, and $\oplus$ abbreviates the direct orthogonal 
sum in $\cH$.

\section{Abstract Damped Wave Equations}
\lb{s2}

We start with some abstract considerations modeling damped wave equations.

\begin{hypothesis} \lb{h2.1}
Let $\cH$ be a complex separable Hilbert space. \\
$(i)$ Assume that $A$ is a densely defined, closed operator in $\cH$ satisfying  
\begin{equation}
A^* A \geq \varepsilon I_{\cH}    \lb{2.1}
\end{equation}
for some $\varepsilon >0$. \\
$(ii)$ Suppose that $R\in\cB(\cH)$. 
\end{hypothesis}

We note that various extensions of the condition $R\in\cB(\cH)$ are possible, see for instance,  \cite{BI88}, \cite{BIW95}, \cite[Sect.\ VI.3]{EN00}. 

Since $\ker(A^* A) = \ker(A)$ if $A$ is densely defined and closed in $\cH$, assumption 
\eqref{2.1} implies that 
\begin{equation}
\ker (A) = \ker (A^* A) = \{0\}.    \lb{2.1a} 
\end{equation}
In addition, we recall that \eqref{2.1} implies that $\ran(A)$ is a closed linear 
subspace of $\cH$. 

Since we are interested in an abstract version of the damped wave equation of 
the form
\begin{equation}
u_{tt} + R u_t + A^*A u =0, \quad u(0) = f_0, \;  u_t(0) = f_1, \quad t \geq 0,   \lb{2.2}
\end{equation}
we rewrite it in the familiar first-order form
\begin{equation}
\begin{pmatrix} u \\ u_t \end{pmatrix}_t =\begin{pmatrix} 0 & I_{\cH} \\ - A^* A & -R \end{pmatrix}
\begin{pmatrix} u \\ u_t \end{pmatrix}, \quad 
\begin{pmatrix} u(0) \\ u_t(0) \end{pmatrix} = \begin{pmatrix} f_0 \\ f_1 \end{pmatrix}, 
\quad t \geq 0,    \lb{2.3}
\end{equation}
and set up the abstract initial value problem \eqref{2.2} as follows: First, one introduces the Hilbert space 
$\cH_A$ by
\begin{equation}
\cH_A = (\dom(A); (\cdot,\cdot)_{\cH_A}), \quad (u,v)_{\cH_A} = (Au,Av)_{\cH}, \; u,v \in \dom(A)  \lb{2.4}
\end{equation}
($\cH_A$ is complete since by hypothesis, $A^* A \geq \varepsilon I_{\cH}$). Moreover, since  
$\ker(A) = \{0\}$, the polar decomposition of $A$ in $\cH$ is of the form $A = J_A |A|$, where $J_A$ 
is an isometry in $\cH$ and $|A| = (A^* A)^{1/2} \geq \varepsilon^{1/2} I_{\cH}$. Thus, 
$\|Af\|_{\cH} = \| |A| \|_{\cH}$, $f\in\dom(A)$, and hence one actually has 
\begin{equation}
\cH_A = \cH_{|A|} \, \text{ with } \, |A| \geq \varepsilon^{1/2} I_{\cH}.    \lb{2.4a}
\end{equation}
We emphasize, that while we assumed that $\ker(A) = \{0\}$, it may happen that $\ker(A^*) \supsetneqq \{0\}$ and hence  $\ran (A) \subsetneqq \cH$, since $\cH = \ker(A^*) \oplus \ol{\ran(A)}$. In particular, we are not assuming $0 \in \rho(A)$, but \eqref{2.1} implies $0\in \rho(|A|)$. 

Given $\cH_A$, one then studies the abstract Cauchy problem in the Hilbert space $\cH_A \oplus \cH$, 
\begin{equation}
F' (t) = G_{A,R} F(t),  \quad F(t) \in \dom (G_{A,R}), \; t \geq 0,  \quad 
F(0) = \begin{pmatrix} f_0 \\ f_1 \end{pmatrix},   \lb{2.5}
\end{equation}
where 
\begin{align}
& G_{A,R} = \begin{pmatrix} 0 & I_{\cH} \\ - A^* A & -R \end{pmatrix}, \quad 
\dom(G_{A,R}) = \dom(A^* A) \oplus \dom(A) \subseteq \cH_A \oplus \cH,    
\lb{2.6} \\
& F(t) = \begin{pmatrix} f(t) \\ g(t) \end{pmatrix} \in \dom (G_{A,R}), 
\quad t \geq 0,   \lb{2.7} 
\end{align}
and the scalar product in $\cH_A \oplus \cH$ is of course defined as usual by
\begin{align}
& \left(\begin{pmatrix} u_1 \\ v_1 \end{pmatrix}, 
\begin{pmatrix} u_2 \\ v_2 \end{pmatrix} \right)_{\cH_A \oplus \cH} 
= (u_1, u_2)_{\cH_A} + (v_1, v_2)_{\cH}  \no \\
& \quad \, = (A u_1, A u_2)_{\cH} + (v_1, v_2)_{\cH}, \quad 
\begin{pmatrix} u_1 \\ v_1 \end{pmatrix}, 
\begin{pmatrix} u_2 \\ v_2 \end{pmatrix} \in \cH_A \oplus \cH.   \lb{2.7a}
\end{align}
Since $i G_{A,0}$ is well-known to be self-adjoint in $\cH_A \oplus \cH$, generating the corresponding unitary group in $\cH_A \oplus \cH$,
\begin{equation}
e^{G_{A,0} t} =\begin{pmatrix} \cos\big((A^* A)^{1/2} t\big) & 
(A^* A)^{-1/2}  \sin\big((A^* A)^{1/2} t\big) \\[2mm]  - (A^* A)^{1/2}  \sin\big((A^* A)^{1/2} t\big) 
& \cos\big((A^* A)^{1/2} t\big) \end{pmatrix}, \quad t \geq 0    \lb{2.8}
\end{equation}
(cf.\ e.g., \cite[Sect.\ VI.3]{EN00}, \cite[Sect.\ 2.7]{Go85}, \cite{GW03}), and 
\begin{equation}
B_R = \begin{pmatrix} 0 & 0 \\ 0 & - R \end{pmatrix} \in \cB(\cH_A \oplus \cH),   \lb{2.9}
\end{equation}
$G_{A,R} = G_{A,0} + B_R$ generates a $C_0$ group on $\cH_A \oplus \cH$ (see, e.g., 
\cite[Sect.\ VI.3]{EN00}, \cite[Sect.\ 2.7]{Go85}, \cite{GW03}). 

Next, we make a connection between the point spectral properties of $i G_{A,R}$ in 
$\cH_A \oplus \cH$ and an abstract perturbed Dirac-type operator $Q + S$ in $\cH \oplus \cH$ defined by
\begin{align}
& Q = \begin{pmatrix} 0 & A^* \\ A & 0 \end{pmatrix}, \quad \dom(Q) = \dom(A) \oplus \dom(A^*) 
\subseteq \cH \oplus \cH,   \lb{2.10} \\
& S = \begin{pmatrix} -i R & 0 \\ 0 & 0 \end{pmatrix}, \quad \dom(S) = \cH \oplus \cH,   \lb{2.11} \\
& Q + S = \begin{pmatrix} -i R & A^* \\ A & 0 \end{pmatrix}, \quad 
\dom(Q + S) = \dom(A) \oplus \dom(A^*) \subseteq \cH \oplus \cH.   \lb{2.12}
\end{align}
For more details on the self-adjoint operator $Q$ we refer to Appendix \ref{sA}. 

We start with the following elementary result:  
\begin{lemma} \lb{l2.2}
Assume Hypothesis \ref{h2.1}. Then 
\begin{align}
\begin{split}
& \begin{pmatrix} -i R & A^* \\ A & 0 \end{pmatrix} 
\begin{pmatrix} 0 & I_{\cH} \\  - i A & 0 \end{pmatrix} =
\begin{pmatrix} 0 & I_{\cH} \\  - i A & 0 \end{pmatrix} 
i \begin{pmatrix} 0 & I_{\cH} \\  - A^* A  & -R \end{pmatrix}  \\
& \, \quad \text{on } \, \dom(A^* A) \oplus \dom(A) \subseteq \cH_A \oplus \cH.   \lb{2.13}
\end{split} 
\end{align}
\end{lemma}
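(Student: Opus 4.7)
The plan is a direct verification: apply both sides of \eqref{2.13} to an arbitrary element $\binom{f}{g}\in\dom(A^*A)\oplus\dom(A)$ and check that the results agree. Since the statement is an operator identity (not just an algebraic one about formal $2\times 2$ matrices), the first thing to settle is that every composition in \eqref{2.13} actually makes sense on the stated domain. For the left-hand side, the inner operator $\binom{\phantom{-}0\phantom{i}\;I_{\cH}}{-iA\;\phantom{i}0}$ sends $\binom{f}{g}$ to $\binom{g}{-iAf}$; this requires $f\in\dom(A)$ (which follows from $f\in\dom(A^*A)$) and the output lies in $\dom(A)\oplus\dom(A^*)=\dom(Q)$ because $g\in\dom(A)$ by assumption and $Af\in\dom(A^*)$ since $f\in\dom(A^*A)$. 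For the right-hand side, $iG_{A,R}\binom{f}{g}=\binom{ig}{-iA^*Af-iRg}\in\dom(A)\oplus\cH$ (using $R\in\cB(\cH)$), and then the outer matrix needs $ig\in\dom(A)$, which again follows from $g\in\dom(A)$.

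Once the domain bookkeeping is in place, the verification is a one-line matrix computation on each side. The left-hand side gives
\begin{equation*}
\begin{pmatrix} -iR & A^* \\ A & 0 \end{pmatrix}\begin{pmatrix} g \\ -iAf \end{pmatrix}
=\begin{pmatrix} -iRg - iA^*Af \\ Ag \end{pmatrix},
\end{equation*}
while the right-hand side gives
\begin{equation*}
\begin{pmatrix} 0 & I_{\cH} \\ -iA & 0 \end{pmatrix}\begin{pmatrix} ig \\ -iA^*Af - iRg \end{pmatrix}
=\begin{pmatrix} -iA^*Af - iRg \\ Ag \end{pmatrix},
\end{equation*}
using $-iA(ig)=Ag$. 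The two column vectors coincide, establishing \eqref{2.13}.

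There is no genuine obstacle here; the lemma is an intertwining identity that says, at the algebraic level, that the formal $2\times 2$ matrix $\binom{\phantom{-}0\phantom{i}\;I_{\cH}}{-iA\;\phantom{i}0}$ intertwines $iG_{A,R}$ with $Q+S$. The only subtle point worth emphasizing in the write-up is that this intertwiner is \emph{not} everywhere defined or invertible between $\cH_A\oplus\cH$ and $\cH\oplus\cH$, which is precisely why the stronger unitary equivalence statement announced in Theorem~\ref{t2.4} requires the extra projection $I_{L^2}\oplus P_{\ran(T_{\rm bc})}$ and a separate argument; at the level of this lemma, however, nothing more than the bounded multiplication of block operators on the indicated domain is needed.
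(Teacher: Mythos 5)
Your proof is correct and takes the same direct-verification route as the paper: apply both sides to an arbitrary $\binom{f}{g}\in\dom(A^*A)\oplus\dom(A)$ and compare. The explicit domain bookkeeping you include is a welcome addition that the paper leaves implicit.
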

\begin{proof}
A direct calculation yields for $u \in \dom(A^* A)$ and $v \in \dom(A)$,
\begin{equation}
\begin{pmatrix} -i R & A^* \\ A & 0 \end{pmatrix}
\begin{pmatrix} 0 & I_{\cH}\\ - i A & 0 \end{pmatrix} \begin{pmatrix}u \\ v \end{pmatrix} = 
\begin{pmatrix} -i A^* A u - i R v \\ Av \end{pmatrix} 
\end{equation}
and
\begin{equation}
\begin{pmatrix} 0 & I_{\cH} \\ -i A & 0 \end{pmatrix}
i \begin{pmatrix} 0 & I_{\cH} \\ - A^* A & - R \end{pmatrix} 
\begin{pmatrix} u \\ v \end{pmatrix} = 
\begin{pmatrix} -i A^* A u - i R v \\ Av \end{pmatrix}. 
\end{equation}
\end{proof}

In this context one observes that the facts $A^* A = |A|^2$ and 
$\dom(A) = \dom(|A|)$ permit one to replace the Dirac-type operator $Q$ by 
\begin{equation}
Q_{|A|} = \begin{pmatrix} 0 & |A| \\ |A| & 0 \end{pmatrix}, \quad 
\dom(Q_{|A|}) = \dom(A) \oplus \dom(A) 
\subseteq \cH \oplus \cH,   \lb{2.14} 
\end{equation} 
which can be advantageous as $|A| \geq \varepsilon^{1/2} I_{\cH}$ (cf.\ \eqref{2.4a}).

Next, we recall a few facts regarding eigenvalues of a densely defined, closed, linear operator  $T$ in $\cH$: The {\it geometric multiplicity}, $m_g(\lambda_0,T)$, of an eigenvalue $\lambda_0 \in \sigma_{\rm p}(T)$ of $T$ is given by 
\begin{equation}
m_g(\lambda_0,T) = \dim(\ker(T - \lambda_0 I_{\cH})),    \lb{2.15} 
\end{equation}
with $\ker(T - \lambda_0 I_{\cH})$ a closed linear subspace in $\cH$. 
(Here, and in the remainder of this paper, dimension always refers to the 
cardinality of an orthonormal basis (i.e., a complete orthonormal sequence of elements) in the separable pre-Hilbert space in question, cf.\ 
\cite[Sect.\ 3.3]{We80}.)

The set of all {\it root vectors} of $T$ (i.e., eigenvectors and generalized eigenvectors, or associated eigenvectors) corresponding to 
$\lambda_0 \in \sigma_{\rm p}(T)$ is given by 
\begin{equation} 
\cR(\lambda_0,T) = \big\{f\in\cH\,\big|\, (T - \lambda_0 I_{\cH})^k f = 0 \; 
\text{for some $k\in\bbN$}\big\}.     \lb{2.15a}
\end{equation}
The set $\cR(\lambda_0,T)$ is a linear subspace of $\cH$ whose dimension 
equals the {\it algebraic multiplicity}, $m_a(\lambda_0,T)$, of $\lambda_0$, 
\begin{equation}
m_a(\lambda_0,T) = \dim\big(\big\{f\in\cH\,\big|\, (T - \lambda_0 I_{\cH})^k f = 0 
\; \text{for some $k\in\bbN$}\big\}\big).   \lb{2.15b}
\end{equation}
In general, $\cR(\lambda_0,T)$ is not a closed linear subspace of $\cH$. 
($\cR(\lambda_0,T)$ is of course closed if $m_a(\lambda_0,T) < \infty$.) One has  
\begin{equation} 
m_g(\lambda_0,T) \leq m_a(\lambda_0,T).    \lb{2.15c}
\end{equation}

If in addition, the eigenvalue $\lambda_0$ of $T$ is an isolated point in $\sigma(T)$ (i.e., separated from the remainder of the spectrum of $T$), one can introduce 
the Riesz projection, $P(\lambda_0,T)$ of $T$ corresponding to $\lambda_0$, by 
\begin{equation}
P(\lambda_0,T)=-\f{1}{2\pi i}\oint_{C(\lambda_0; \varepsilon) }
d\zeta \, (T-\zeta I_{\cH})^{-1}, \lb{2.15d}
\end{equation}
with $C(\lambda_0; \varepsilon) $ a counterclockwise oriented circle centered at 
$\lambda_0$ with sufficiently small radius $\varepsilon>0$, such that the closed 
disk with center $\lambda_0$ and radius $\varepsilon$ excludes   
$\sigma(T)\backslash\{\lambda_0\}$. In this case,
\begin{equation}
\cH = \ker(P(\lambda_0,T)) \, \dot + \, \ran(P(\lambda_0,T)),    \lb{2.15e}
\end{equation} 
with $\ker(P(\lambda_0,T))$ and $\ran(P(\lambda_0,T))$ closed linear 
subspaces in $\cH$, and (cf.\ \cite[No.\ 149]{RS90})
\begin{equation}
\ran(P(\lambda_0,T)) = \big\{f \in \cH \,\big|\, 
\lim_{n\to\infty} \|(T - \lambda_0 I_{\cH})^n f\|^{1/n} = 0\big\},    \lb{2.15f}
\end{equation}
and hence,
\begin{equation}
\cR(\lambda_0,T) \subseteq \ran(P(\lambda_0,T)).    \lb{2.15g}
\end{equation}
Moreover, in the particular case where 
\begin{equation}
m_a(\lambda_0,T) < \infty, \, \text{ or equivalently, } \, 
\dim(\ran(P(\lambda_0,T)) < \infty, 
\end{equation} 
one has (cf.\ \cite[Sect.\ II.1]{GGK90}, \cite[Sects.\ I.1, I.2]{GK69})
\begin{equation}
m_a(\lambda_0,T) = \dim(\ran(P(\lambda_0,T)) < \infty, \quad 
\cR(\lambda_0,T) = \ran(P(\lambda_0,T)).      \lb{2.15h}
\end{equation}
If $T\in \cB_{\infty}(\cH)$, then any 
$\lambda_0 \in \sigma_{\rm p}(T)\backslash\{0\}$ satisfies \eqref{2.15h}. 

We also note for later purpose that if 
$\Lambda = \{\lambda_1,\dots,\lambda_N\}$ 
for some $N\in\bbN$, represents a finite cluster of 
eigenvalues of $T$, isolated from the remaining spectrum of $T$, then the 
Riesz projection of $T$ corresponding to $\Lambda$ is given by 
\begin{equation}
P(\Lambda,T) = - \f{1}{2\pi i}\oint_{C(\Lambda) }
d\zeta \, (T-\zeta I_{\cH})^{-1}, \lb{2.15ha}
\end{equation}
with $C(\Lambda)$ a counterclockwise oriented simple contour enclosing all eigenvalues in the cluster $\Lambda$ in its open interior, such that 
$C(\Lambda)$ excludes $\sigma(T)\backslash\Lambda$. In this situation 
$P(\Lambda,T)$ represents the sum of the disjoint Riesz projections 
corresponding to $\lambda_j \in \Lambda$, $j = 1,\dots,N$,  
\begin{equation}
P(\Lambda,T) = \sum_{j=1}^N P(\lambda_j,T), \quad 
P(\lambda_j,T) P(\lambda_k,T) = \delta_{j,k} P(\lambda_k,T), \; 
j,k = 1,\dots,N. 
\end{equation}

The connection between the point spectra of $i G_{A,R}$ and $(Q+S)$, more precisely, between $i G_{A,R}$ and 
$(Q+S)(I_{\cH} \oplus P_{\ran(A)})$, is detailed in the following result.  
In this context one observes that Hypothesis \ref{h2.1} implies that 
$\ran(A)$ is a closed linear subspace of $\cH$, 
\begin{equation}
\ran(A) = \ol{\ran(A)} = (\ker(A^*))^\bot.   \lb{2.15i}
\end{equation}

\begin{theorem} \lb{t2.3}
Assume Hypothesis \ref{h2.1}. Then 
\begin{equation}
0 \notin \sigma_{\rm p}(i G_{A,R}),    \lb{2.16}
\end{equation}
and 
\begin{equation}
\sigma_{\rm p}(i G_{A,R}) = \sigma_{\rm p}(Q + S)\backslash \{0\},   \lb{2.17}
\end{equation}
with geometric and algebraic multiplicities preserved. 

More precisely, let $0 \neq \lambda_0 \in \sigma_{\rm p} (i G_{A,R})$ and 
\begin{equation}
i G_{A,R} \begin{pmatrix} u(\lambda_0) \\ v(\lambda_0) \end{pmatrix} = 
\lambda_0 \begin{pmatrix} u(\lambda_0) \\ v(\lambda_0) \end{pmatrix}, \quad 
\begin{pmatrix} u(\lambda_0) \\ v(\lambda_0) \end{pmatrix} \in \dom (G_{A,R}),   \lb{2.20}
\end{equation} 
then $\lambda_0 \in \sigma_{\rm p} (Q+S)$ and 
\begin{equation}
(Q+S) \begin{pmatrix} v(\lambda_0) \\ -i A u(\lambda_0) \end{pmatrix} = 
\lambda_0 \begin{pmatrix} v(\lambda_0) \\ -i A u(\lambda_0) \end{pmatrix}, \quad 
\begin{pmatrix} v(\lambda_0) \\ -i A u(\lambda_0) \end{pmatrix} \in \dom (Q+S).  \lb{2.21}
\end{equation}
Conversely, let $0 \neq \lambda_1 \in \sigma_{\rm p} (Q+S)$ and 
\begin{equation}
(Q+S) \begin{pmatrix} \psi_1 (\lambda_1) \\ \psi_2 (\lambda_1) \end{pmatrix} = 
\lambda_1 \begin{pmatrix} \psi_1 (\lambda_1) \\ \psi_2 (\lambda_1) \end{pmatrix}, \quad 
\begin{pmatrix} \psi_1 (\lambda_1) \\ \psi_2 (\lambda_1) \end{pmatrix} \in \dom (Q+S),   \lb{2.22}
\end{equation}
then $\lambda_1 \in \sigma_{\rm p} (i G_{A,R})$, $\psi_2 (\lambda_1) \in \ran(A)$, and 
\begin{align}
\begin{split} 
i G_{A,R} \begin{pmatrix} i A^{-1}|_{\ran(A)} \psi_2 (\lambda_1) \\ \psi_1 (\lambda_1) \end{pmatrix} = 
\lambda_1\begin{pmatrix} i A^{-1}|_{\ran(A)} \psi_2 (\lambda_1) \\ \psi_1 (\lambda_1) \end{pmatrix},&   \\
\begin{pmatrix} i A^{-1}|_{\ran(A)} \psi_2 (\lambda_1) \\ \psi_1 (\lambda_1) \end{pmatrix} \in \dom (G_{A,R}),&    
\lb{2.23}
\end{split} 
\end{align} 
where $A^{-1}|_{\ran(A)}$ denotes the inverse of the map $A:\dom(A)\to\ran(A)$. 

We note that $Q+S$ in \eqref{2.17}, \eqref{2.21}, and \eqref{2.22} can be replaced by $(Q+S)(I_{\cH} \oplus P_{\ran(A)})$.
\end{theorem}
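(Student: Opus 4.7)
The backbone of the proof is the intertwining identity of Lemma \ref{l2.2}, which I abbreviate as $(Q+S)\,M = M\,(iG_{A,R})$ on $\dom(G_{A,R})$, where
\begin{equation*}
M = \begin{pmatrix} 0 & I_{\cH} \\ -iA & 0 \end{pmatrix}
\end{equation*}
maps $\dom(A^*A) \oplus \dom(A)$ into $\dom(A) \oplus \dom(A^*)$. A crucial preliminary observation is that $M$ is injective: $M(u,v)^\top = 0$ forces $v = 0$ and $Au = 0$, which by \eqref{2.1a} gives $u=0$. The same remark dispatches \eqref{2.16}: $iG_{A,R}(u,v)^\top = 0$ implies $v = 0$ and $A^*Au = 0$, so $u \in \ker(A^*A) = \ker(A) = \{0\}$.

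For the forward direction of \eqref{2.17}, given a nonzero eigenpair $(\lambda_0,(u(\lambda_0),v(\lambda_0))^\top)$ of $iG_{A,R}$, the intertwining immediately produces
\begin{equation*}
(Q+S)\begin{pmatrix} v(\lambda_0) \\ -iAu(\lambda_0) \end{pmatrix} = M(iG_{A,R})\begin{pmatrix} u(\lambda_0) \\ v(\lambda_0) \end{pmatrix} = \lambda_0 M\begin{pmatrix} u(\lambda_0) \\ v(\lambda_0) \end{pmatrix},
\end{equation*}
with the right-hand side a nonzero element of $\dom(Q+S)$ by injectivity of $M$ and the domain inclusion $M[\dom(G_{A,R})] \subseteq \dom(Q+S)$, yielding \eqref{2.21}. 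For the converse \eqref{2.23}, given $0\neq\lambda_1\in\sigma_{\rm p}(Q+S)$ with eigenvector $(\psi_1(\lambda_1),\psi_2(\lambda_1))^\top$, the lower row of the eigenvalue equation reads $A\psi_1(\lambda_1) = \lambda_1\psi_2(\lambda_1)$, placing $\psi_2(\lambda_1)$ in the closed subspace $\ran(A)$ (cf.\ \eqref{2.15i}). Since $A\colon\dom(A)\to\ran(A)$ is bijective by \eqref{2.1a}, $A^{-1}|_{\ran(A)}$ is well-defined and the candidate $(iA^{-1}|_{\ran(A)}\psi_2(\lambda_1),\psi_1(\lambda_1))^\top$ lies in $\dom(A^*A)\oplus\dom(A) = \dom(G_{A,R})$; direct substitution into \eqref{2.6}, using both rows of \eqref{2.22}, confirms \eqref{2.23}.

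The assignment $\Phi\colon(u,v)^\top\mapsto(v,-iAu)^\top$ restricts to a linear bijection between the geometric eigenspaces of $iG_{A,R}$ and of $Q+S$ at any $\lambda\neq 0$, with inverse given by the formula in \eqref{2.23}, so geometric multiplicities agree. For algebraic multiplicities, iteration of Lemma \ref{l2.2} gives $(Q+S-\lambda I)^k M = M(iG_{A,R}-\lambda I)^k$ on the natural iterated domain, so $\Phi$ transports root vectors to root vectors; the converse transport is obtained by the same component-by-component inversion as above. Finally, the substitution of $(Q+S)(I_{\cH}\oplus P_{\ran(A)})$ in place of $Q+S$ is permissible because every nonzero eigenvector of $Q+S$ automatically has its second component in $\ran(A)$, so the projection acts as the identity on the relevant spectral subspace. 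The main technical hurdle is the algebraic multiplicity statement: iterating the intertwining requires checking at each step that the intermediate vectors remain in the appropriate domains of these unbounded operators. Should this bookkeeping become awkward, one can instead pass to resolvents, where Lemma \ref{l2.2} yields $(Q+S-\zeta I)^{-1}M \supseteq M(iG_{A,R}-\zeta I)^{-1}$ for $\zeta$ in the joint resolvent set, and then transport the root subspaces via the Riesz projection formula \eqref{2.15d}.
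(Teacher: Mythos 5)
Your proof is correct and, for the identity $0\notin\sigma_{\rm p}(iG_{A,R})$, the eigenvalue correspondence, and the geometric-multiplicity equality, follows the same route as the paper: the intertwining relation $(Q+S)M = M\,(iG_{A,R})$ of Lemma \ref{l2.2} together with the injectivity of $M$. The genuine divergence is in the treatment of algebraic multiplicities, which the paper dispatches in one sentence by appealing to the unitary equivalence $(Q+S)(I_{\cH}\oplus P_{\ran(A)}) = U_{\wti A}\,iG_{A,R}\,U_{\wti A}^{-1}$ of Theorem \ref{t2.4}, itself cited from \cite{GGHT10}. You instead iterate the intertwining to obtain $(Q+S-\lambda I)^k M = M\,(iG_{A,R}-\lambda I)^k$, which is more self-contained. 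The domain bookkeeping you anticipate is in fact automatic: for the power $(iG_{A,R}-\lambda I)^k$ to annihilate a vector, every intermediate vector in the Jordan chain must already lie in $\dom(G_{A,R})$, and $M$ carries $\dom(A^*A)\oplus\dom(A)$ into $\dom(A)\oplus\dom(A^*)$ at each step. The one point you should make explicit is the converse transport of root vectors: you need every vector in a Jordan chain for $Q+S$ at a nonzero $\lambda$, not only the genuine eigenvector at the bottom of the chain, to have second component in $\ran(A)$. This follows by descending induction on the chain: if $(Q+S-\lambda I)\Psi$ already has second component in $\ran(A)$, then the second row $A\psi_1-\lambda\psi_2 = [(Q+S-\lambda I)\Psi]_2$ gives $\lambda\psi_2\in\ran(A)$, hence $\psi_2\in\ran(A)$ since $\lambda\neq 0$ and $\ran(A)$ is closed. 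Your resolvent alternative evades this point by noting that $\cH\oplus\ran(A)$ reduces $Q+S$ and $Q+S$ vanishes on the complement $\{0\}\oplus\ker(A^*)$, so all nonzero Riesz ranges sit inside $\cH\oplus\ran(A)$; this is sound, but that reducing-subspace fact is essentially the structural content of Theorem \ref{t2.4}, so the resolvent route buys less independence from the paper than the direct chain iteration does.
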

\begin{proof}
Let $(u(\lambda_0) \; v(\lambda_0))^\top \in \dom(G_{A,R})$ and suppose that 
$i G_{A,R} (u(\lambda_0) \; v(\lambda_0))^\top =0$, that is,
\begin{equation}
\begin{pmatrix} 0 & I_{\cH} \\ - A^* A & - R \end{pmatrix} 
\begin{pmatrix} u(\lambda_0) \\ v(\lambda_0)  \end{pmatrix} = 
\begin{pmatrix} v(\lambda_0) \\ 
- A^* A u(\lambda_0) - R v(\lambda_0) \end{pmatrix} =0,    \lb{2.24}
\end{equation}
implying $u(\lambda_0) = v(\lambda_0) =0$, and hence \eqref{2.16}. 

Next, let $0 \neq \lambda_0 \in \sigma_{\rm p} (i G_{A,R})$ and suppose \eqref{2.20} holds. Then, 
$(u(\lambda_0) \; v(\lambda_0))^\top \in \dom (G_{A,R})$ implies 
$u(\lambda_0) \in \dom (A^* A)$ and $v(\lambda_0) \in \dom(A)$ and 
using \eqref{2.13} one obtains   
\begin{align}
& \begin{pmatrix} - i R & A^* \\ A & 0 \end{pmatrix} 
\begin{pmatrix} v(\lambda_0) \\ -i A u(\lambda_0) \end{pmatrix} = 
\begin{pmatrix} - i R & A^* \\ A & 0 \end{pmatrix} 
\begin{pmatrix} 0 & I_{\cH} \\ A & 0 \end{pmatrix} 
\begin{pmatrix} u(\lambda_0) \\ v(\lambda_0) \end{pmatrix}  \no \\
& \quad = \begin{pmatrix} - i R & A^* \\ A & 0 \end{pmatrix} 
i \begin{pmatrix} 0 & I_{\cH} \\ - A^* A & - R \end{pmatrix} 
\begin{pmatrix} u(\lambda_0) \\ v(\lambda_0) \end{pmatrix} = 
\lambda_0 \begin{pmatrix} v(\lambda_0) \\ -i A u(\lambda_0) \end{pmatrix},    
\lb{2.25}
\end{align}
proving \eqref{2.21}. It is also clear that if 
$(u_{j}(\lambda_0) \; v_{j}(\lambda_0))^\top$, $j=1,2$, are two linearly independent 
nonzero solutions of $i G_{A,R} (u(\lambda_0) \; v(\lambda_0))^\top 
= \lambda_0 (u(\lambda_0) \; v(\lambda_0))^\top$, then also   
$(v_j(\lambda_0) \; -i A u_j(\lambda_0))^\top$, $j=1,2$, of 
$(Q+S) (\psi_1(\lambda_0) \;  \psi_2(\lambda_0))^\top = 
\lambda_0 (\psi_1(\lambda_0) \; \psi_2(\lambda_0))^\top$ are linearly independent 
nonzero solutions since by hypothesis, 
$\ker (A) = \{0\}$, proving that geometric multiplicities are preserved. 

Finally, supose that $0 \neq \lambda_1 \in \sigma_{\rm p} (Q+S)$ and assume that \eqref{2.22} holds. Then one obtains 
\begin{align}
& - i R \psi_1 (\lambda_1) + A^* \psi_2 (\lambda_1) = \lambda_1 \psi_1 (\lambda_1), \quad 
\psi_1 (\lambda_1) \in \dom (A), \; \psi_2 (\lambda_1) \in \dom(A^*),    \lb{2.26} \\
& \; A \psi_1 (\lambda_1) = \lambda_1 \psi_2 (\lambda_1), \, \text{ implying } \, 
\psi_2 (\lambda_1) \in \ran (A) = \dom\big( A^{-1}|_{\ran(A)}\big).     \lb{2.27}
\end{align}
Thus, one computes 
\begin{align}
& i \begin{pmatrix} 0 & I_{\cH} \\ - A^* A & - R \end{pmatrix} 
\begin{pmatrix} i A^{-1}|_{\ran(A)} \psi_2 (\lambda_1) \\ 
\psi_1 (\lambda_1)\end{pmatrix}    \no \\ 
& \quad = 
\begin{pmatrix} 0 & i A^{-1}|_{\ran(A)} \\ I_{\cH} & 0 \end{pmatrix} 
\begin{pmatrix} - i R & A^* \\ A & 0 \end{pmatrix}
\begin{pmatrix} \psi_1 (\lambda_1) \\ \psi_2 (\lambda_1) \end{pmatrix}    \no \\
& \quad = \lambda_1 \begin{pmatrix} 0 & i A^{-1}|_{\ran(A)} \\ 
I_{\cH} & 0 \end{pmatrix}  
\begin{pmatrix} \psi_1 (\lambda_1) \\ \psi_2 (\lambda_1) \end{pmatrix} = 
\lambda_1 \begin{pmatrix} i A^{-1}|_{\ran(A)} \psi_2 (\lambda_1) \\ 
\psi_1 (\lambda_1)\end{pmatrix}.    \lb{2.28}
\end{align}
Again one infers that geometric multiplicities are preserved as the map 
$A^{-1}|_{\ran(A)}:\ran(A)\to\dom(A)$ is injective. 

Finally, the preservation of algebraic multiplicities follows from the 
unitary equivalence result in Theorem \ref{t2.4} below.
\end{proof}

In fact, one can prove the following extension of Theorem \ref{t2.3}, and 
we refer to \cite{GGHT10} for a detailed proof: 

\begin{theorem}  [\cite{GGHT10}] \lb{t2.4}
Assume Hypothesis \ref{h2.1}. Then 
$\cH \oplus (\ker(A^*))^\bot = \cH \oplus \ran(A)$ is a reducing subspace 
for $Q+S$ and  
\begin{align}
\begin{split}
& (Q + S) (I_{\cH} \oplus [I_{\cH} - P_{\ker(A^*)}]) 
= (Q + S) (I_{\cH} \oplus P_{\ran(A)})   \\
& \quad = U_{\wti A} \, i \, G_{A,R} U_{\wti A}^{-1},    \lb{2.36}
\end{split} 
\end{align}
where $\wti A$ defined by 
\begin{equation}
\wti A : \begin{cases} \cH_A \to \ran(A), \\  f \mapsto Af, \end{cases}   
\, \text{ is unitary,}      \lb{2.36a}
\end{equation}
and 
\begin{align}
U_{\wti A} &= \begin{pmatrix} 0 & I_{\cH} \\ -i \, \wti A & 0 \end{pmatrix} \in 
\cB(\cH_A \oplus \cH, \cH \oplus \ran(A))  \, \text{ is unitary,}    \lb{2.37} \\
U_{\wti A}^{-1} &= \begin{pmatrix} 0 & i {\wti A}^{-1} \\ I_{\cH}  & 0 \end{pmatrix} 
\in \cB(\cH \oplus \ran(A), \cH_A \oplus \cH)  \, \text{ is unitary.}     \lb{2.39}
\end{align}
\end{theorem}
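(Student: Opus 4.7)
The plan is to verify the three unitarity claims and the intertwining identity in turn, with Lemma~\ref{l2.2} doing most of the algebraic heavy lifting.

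\textbf{Step 1: $\wti A$ is unitary.} Since $A^*A\ge\varepsilon I_\cH$ forces $\ker(A)=\{0\}$, the map $\wti A\colon \dom(A)\to\ran(A)$, $f\mapsto Af$, is a bijection. Because $\ran(A)$ is closed in $\cH$ (this follows from \eqref{2.1} as already noted), it is a Hilbert space in its own right, and the very definition of the $\cH_A$-inner product gives $(\wti A f,\wti A g)_{\ran(A)} = (Af,Ag)_\cH = (f,g)_{\cH_A}$. Hence $\wti A$ is a unitary map between the two Hilbert spaces.

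\textbf{Step 2: $U_{\wti A}$ is unitary with inverse as in \eqref{2.39}.} A direct block computation shows the operator in \eqref{2.39} is a two-sided inverse of $U_{\wti A}$ (keeping in mind that the two factors act between different Hilbert spaces). The isometry property is immediate from Step~1:
\begin{align*}
\bigl\|U_{\wti A}(u,v)^\top\bigr\|_{\cH\oplus\ran(A)}^2
&= \|v\|_\cH^2 + \|{-i\wti A u}\|_{\ran(A)}^2  \\
&= \|v\|_\cH^2 + \|u\|_{\cH_A}^2
= \bigl\|(u,v)^\top\bigr\|_{\cH_A\oplus\cH}^2,
\end{align*}
and surjectivity is clear from the explicit inverse formula. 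Thus $U_{\wti A}$ is unitary.

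\textbf{Step 3: Reducing subspace.} Let $P:=I_\cH\oplus P_{\ran(A)}=I_\cH\oplus(I_\cH - P_{\ker(A^*)})$; we must check that $P$ maps $\dom(Q+S)=\dom(A)\oplus\dom(A^*)$ into itself and commutes with $Q+S$ there. For the first claim, $\ker(A^*)\subseteq\dom(A^*)$ trivially, so $P_{\ran(A)}g = g - P_{\ker(A^*)}g \in \dom(A^*)$ whenever $g\in\dom(A^*)$. For the commutation, observe $A^* P_{\ker(A^*)}=0$, hence $A^*P_{\ran(A)}g=A^*g$, while $Af\in\ran(A)$ automatically, so $P_{\ran(A)}Af=Af$. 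A two-line block computation then gives $P(Q+S)=(Q+S)P$ on $\dom(Q+S)$, establishing that $\cH\oplus\ran(A)$ reduces $Q+S$.

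\textbf{Step 4: The unitary equivalence.} This is the heart of the matter, and Lemma~\ref{l2.2} supplies the algebraic content: on $\dom(A^*A)\oplus\dom(A)=\dom(G_{A,R})$,
\[
(Q+S)\begin{pmatrix} 0 & I_\cH \\ -iA & 0\end{pmatrix}
= \begin{pmatrix} 0 & I_\cH \\ -iA & 0\end{pmatrix} i\,G_{A,R}.
\]
On underlying vectors the matrix in this identity agrees with $U_{\wti A}$, but as a bounded operator it now lives unitarily between $\cH_A\oplus\cH$ and $\cH\oplus\ran(A)$ by Steps~1--2. Conjugating yields $U_{\wti A}\,iG_{A,R}\,U_{\wti A}^{-1}$ as an operator on $\cH\oplus\ran(A)$, and the domain bookkeeping gives $\dom(A)\oplus(\ran(A)\cap\dom(A^*))$ for both sides: for the right-hand side this is $U_{\wti A}(\dom(G_{A,R}))$ together with the identity $A(\dom(A^*A))=\ran(A)\cap\dom(A^*)$; for $(Q+S)(I_\cH\oplus P_{\ran(A)})$ restricted to $\cH\oplus\ran(A)$, the projection acts as the identity, leaving $\dom(Q+S)\cap(\cH\oplus\ran(A))$. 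Combined with the calculation of Step~3 which extends $(Q+S)P$ from $\dom(Q+S)$ to a genuine operator on $\cH\oplus\cH$ via $P$, this gives the two displayed equalities in \eqref{2.36}.

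\textbf{Main obstacle.} The algebra is forced by Lemma~\ref{l2.2}; the delicate part is the simultaneous domain tracking across three different Hilbert spaces ($\cH_A\oplus\cH$, $\cH\oplus\ran(A)$, and $\cH\oplus\cH$) and verifying that the conjugate $U_{\wti A}\,iG_{A,R}\,U_{\wti A}^{-1}$ has exactly the same domain as the projected operator $(Q+S)(I_\cH\oplus P_{\ran(A)})$, rather than a possibly smaller core. The identifications $\dom(A^*A)=\{u\in\dom(A):Au\in\dom(A^*)\}$ and $\wti A(\dom(A^*A))=\ran(A)\cap\dom(A^*)$ are the key checks.
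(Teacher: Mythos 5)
The paper deliberately omits the proof of Theorem~\ref{t2.4}, referring to \cite{GGHT10}, so there is no internal proof to compare against. That said, your argument is mathematically sound and takes precisely the route that the paper's scaffolding (most visibly Lemma~\ref{l2.2}) is built to support: Steps~1--2 are correct verifications of the unitarity claims, Step~3 correctly checks $P_{\ran(A)}\dom(A^*)\subseteq\dom(A^*)$ together with $A^*P_{\ran(A)}=A^*$ on $\dom(A^*)$ and $P_{\ran(A)}A=A$ to establish reduction, and Step~4 correctly identifies the intertwiner of Lemma~\ref{l2.2} with $U_{\wti A}$ and performs the domain bookkeeping, the key identity being $\wti A\bigl(\dom(A^*A)\bigr)=\ran(A)\cap\dom(A^*)$.

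One point worth flagging, though it is really an imprecision in the statement you inherited rather than a gap in your proof: literally, $(Q+S)(I_{\cH}\oplus P_{\ran(A)})$ is an operator on $\cH\oplus\cH$ whose domain contains vectors with nonzero component in $\{0\}\oplus\ker(A^*)$, whereas $U_{\wti A}\,iG_{A,R}\,U_{\wti A}^{-1}$ lives entirely on $\cH\oplus\ran(A)$. The two sides of \eqref{2.36} therefore agree only after restriction to the reducing subspace $\cH\oplus\ran(A)$ (equivalently, $U_{\wti A}\,iG_{A,R}\,U_{\wti A}^{-1}$ is the \emph{part} of $Q+S$ on $\cH\oplus\ran(A)$). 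You acknowledge this correctly in Step~4 by restricting; it would strengthen the write-up to say explicitly at the outset that the identity \eqref{2.36} is being read on the reducing subspace, so the reader is not left wondering whether the operators are literally equal on all of $\cH\oplus\cH$.
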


We note that while Theorem \ref{t2.4} has been obtained by Huang 
\cite{Hu97} under the additional assumption $|A| \geq \varepsilon^{1/2} I_{\cH}$, 
the results in Theorems \ref{t2.3} and \ref{t2.4} appear to be new under 
the general Hypothesis \ref{h2.1}. 

Next, let $\gC$ be a conjugation operator in $\cH$, that is, $\gC $ is an antilinear 
involution satisfying (see, e.g., \cite[Sect.\ III.5]{EE89} and \cite[p.\ 76]{Gl65}) 
\begin{equation}
	(\gC u,v)_\cH=(\gC v,u)_\cH  \quad u,v\in \cH, \quad \gC^2=I_{\cH}.    \lb{2.46}
\end{equation}
In particular,
\begin{equation}
	(\gC u,\gC v)_\cH=(v,u)_\cH, \quad u,v\in \cH. \lb{2.47}
\end{equation}
The densely defined operator $S$ in $\cH$ is called $\gC$-invariant if  
\begin{equation}
	S=\gC \, S \, \gC.     \lb{2.48}
\end{equation}

\begin{lemma} \lb{l2.5}
Assume Hypothesis \ref{h2.1}, let $\gC$ be a conjugation operator in $\cH$ and suppose that $A^*A$ and $R$ are $\gC $-invariant. Then
\begin{equation}
\lambda_0 \in \sigma_{\rm p} (i G_{A,R}) \, \text{ if and only if } \, 
- \ol{\lambda_0} \in \sigma_{\rm p} (i G_{A,R}),      \lb{2.49}
\end{equation}
with geometric and algebraic multiplicities preserved. As a consequence of 
Theorem \ref{t2.4}, 
the analogous relation \eqref{2.49} $($including preservation of geometric and algebraic multiplicities$)$ extends to nonzero eigenvalues of $Q+S$.
\end{lemma}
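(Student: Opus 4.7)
The plan is to exhibit an antilinear involution $J$ on $\cH_A \oplus \cH$ that commutes with $G_{A,R}$ (hence anti-commutes with $iG_{A,R}$), read off the spectral symmetry $\lambda_0 \mapsto -\overline{\lambda_0}$ from this intertwining, and then transfer the result to nonzero eigenvalues of $Q+S$ via the unitary equivalence in Theorem \ref{t2.4}.

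First I would upgrade the $\gC$-invariance of $A^*A$ to one of $|A| = (A^*A)^{1/2}$ by applying real Borel functional calculus to the self-adjoint operator $A^*A$ (using $\gC c\, \gC = \conj{c}$ for scalars and $\gC A^*A \gC = A^*A$ on the invariant domain). Since $\dom(A) = \dom(|A|)$ by polar decomposition, this yields $\gC\dom(A) = \dom(A)$ and $\|A \gC u\|_\cH = \||A|\gC u\|_\cH = \|\gC |A|u\|_\cH = \||A|u\|_\cH = \|Au\|_\cH$, so that $\gC$ is an isometric antilinear involution on $\cH_A$. I then set $J := \gC \oplus \gC$, which is an antilinear involution on $\cH_A \oplus \cH$ preserving the domain $\dom(G_{A,R}) = \dom(A^*A) \oplus \dom(A)$.

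A routine $2\times 2$ block computation using the $\gC$-invariance of $A^*A$ and $R$ (plus $\gC^2 = I_\cH$) gives $J G_{A,R} = G_{A,R} J$ on $\dom(G_{A,R})$. Antilinearity of $J$ flips the scalar $i$, which upgrades this to
\begin{equation*}
J(iG_{A,R}) = -(iG_{A,R})J \quad \text{on } \dom(G_{A,R}).
\end{equation*}
Hence if $(iG_{A,R})\psi = \lambda_0 \psi$, then $(iG_{A,R})(J\psi) = -J(iG_{A,R})\psi = -\overline{\lambda_0}\,(J\psi)$, establishing one direction of \eqref{2.49}; the other follows from $J^2 = I$. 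For multiplicities, an induction on $k$ shows
\begin{equation*}
J(iG_{A,R} - \lambda_0 I)^k = (-1)^k (iG_{A,R} + \overline{\lambda_0} I)^k J, \quad k \in \bbN,
\end{equation*}
so $J$ maps the algebraic eigenspace at $\lambda_0$ antilinearly bijectively onto that at $-\overline{\lambda_0}$. Since antilinear bijections send $\bbC$-linearly independent sets to $\bbC$-linearly independent sets (apply $J$ a second time), both geometric ($k=1$) and algebraic multiplicities are preserved.

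For the final clause, I would invoke Theorem \ref{t2.4}: $(Q+S)(I_\cH \oplus P_{\ran(A)})$ is unitarily equivalent to $iG_{A,R}$ on $\cH \oplus \ran(A)$, so the nonzero point spectra together with geometric and algebraic multiplicities coincide, and the involution $\lambda_0 \leftrightarrow -\overline{\lambda_0}$ transfers verbatim to nonzero eigenvalues of $Q+S$. The main obstacle is the opening domain-theoretic step: one must be careful that $\gC$-invariance of the operator $A^*A$ (on its natural domain) in fact passes to $|A|$, hence preserves $\dom(A)$, and that $J$ genuinely acts as an antilinear isometric involution on the graph-normed space $\cH_A$. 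Once that is secured, everything reduces to essentially formal algebra driven by the intertwining $J(iG_{A,R}) = -(iG_{A,R})J$.
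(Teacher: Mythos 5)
Your argument is correct and is essentially the paper's: the paper's entire proof consists of the single intertwining identity $(\gC \oplus \gC)\, iG_{A,R}\, (\gC \oplus \gC) = -iG_{A,R}$, from which the spectral symmetry and preservation of multiplicities follow. You have simply made explicit what the paper leaves implicit, namely that $\gC$-invariance of $A^*A$ passes to $|A|$ and hence $\gC\oplus\gC$ genuinely preserves $\dom(G_{A,R})$, and the inductive bookkeeping $(\gC\oplus\gC)(iG_{A,R}-\lambda_0 I)^k = (-1)^k(iG_{A,R}+\ol{\lambda_0}I)^k(\gC\oplus\gC)$ behind the multiplicity claim.
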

\begin{proof}
This follows from
\begin{equation}
\begin{pmatrix} \gC & 0 \\ 0 & \gC \end{pmatrix} i G_{A,R} 
\begin{pmatrix} \gC & 0 \\ 0 & \gC \end{pmatrix} = - i G_{A,R}. 
\end{equation}
\end{proof}

The following remark illustrates the connection between the generator 
$i G_{A,R}$ and the monic second-degree operator polynomial 
$L(z) = z^2 I_{\cH} + z i R - A^*A$, $z\in\bbC$, in $\cH$:

\begin{remark} \lb{r2.5}
The eigenvalue problem for $i G_{A,R}$, that is,
\begin{equation}
i G_{A,R} \begin{pmatrix} u(\lambda_0) \\ v(\lambda_0) \end{pmatrix} = 
\lambda_0 \begin{pmatrix} u(\lambda_0) \\ v(\lambda_0) \end{pmatrix}, \quad 
\begin{pmatrix} u(\lambda_0) \\ v(\lambda_0) \end{pmatrix} \in \dom (G_{A,R}),   \lb{2.29}
\end{equation} 
is equivalent to the pair of equations
\begin{equation}
v(\lambda_0) = - i \lambda_0 u(\lambda_0), \quad 
A^* A u(\lambda_0) + R v(\lambda_0) - i \lambda_0 v(\lambda_0) = 0.    \lb{2.30}
\end{equation}
Thus, eliminating the component $v(\lambda_0)$ in the second equation in \eqref{2.30} 
yields the quadratic pencil equation for $u(\lambda_0)$, 
\begin{equation}
\big(A^* A - \lambda_0 i R -\lambda_0^2\big) u(\lambda_0) =0, \quad u(\lambda_0) \in \dom (A^* A). 
\lb{2.31}
\end{equation}
Similarly, eliminating $ \psi_2 (\lambda_1)$ in the eigenvalue equation \eqref{2.22} for $(Q+S)$ results in the analogous quadratic pencil equation for $\psi_1 (\lambda_1)$, 
\begin{equation}
\big(A^* A - \lambda_1 i R -\lambda_1^2\big) \psi_1 (\lambda_1) =0, \quad 
\psi_1 (\lambda_1) \in \dom (A^* A).    \lb{2.32}
\end{equation}
Moreover, introducing the quadratic pencil (i.e., the second-degree  operator polynomial) $L(\cdot)$ in $\cH$, 
\begin{equation}
L(z) = z^2 I_{\cH} + z i R - A^*A, \quad 
\dom(L(z)) = \dom(A^* A), \quad z \in \bbC,     \lb{2.33a}
\end{equation}
one verifies the identity 
\begin{equation}
(L(z) \oplus I_{\cH}) F(z) = 
E(z) (i G_{A,R} - z I_{\cH \oplus \cH}), \quad z \in \bbC,   \lb{2.33b}
\end{equation}
where (for $z \in \bbC$)
\begin{align}
& E(z) = \begin{pmatrix} - z I_{\cH} - i R & -i I_{\cH} \\
I_{\cH} & 0 \end{pmatrix},  \quad \dom(E(z)) = 
\dom(R) \oplus \cH \subseteq \cH \oplus \cH,   \\
& E(z)^{-1} = \begin{pmatrix} 0 & I_{\cH} \\
i I_{\cH} & -i(- z I_{\cH} - i R) \end{pmatrix},  \quad 
\dom\big(E(z)^{-1}\big) = 
\cH \oplus \dom(R) \subseteq \cH \oplus \cH,   \\
& F(z) = \begin{pmatrix} I_{\cH} & 0 \\
- z I_{\cH} & i I_{\cH} \end{pmatrix}, \, 
F(z)^{-1} = \begin{pmatrix} I_{\cH} & 0 \\
- i z I_{\cH} & -i I_{\cH} \end{pmatrix} \in \cB(\cH \oplus \cH). 
\end{align}
Thus, identity \eqref{2.33b} exhibits $i G_{A,R}$ as a global linearization 
of the quadratic pencil $L(\cdot)$ (in analogy to the discussion in 
\cite[Sect.\ 1.1, Example\ 1.1.4]{Ro89} in the context of bounded 
operator pencils), and again the (geometric and algebraic) multiplicity of 
nonzero eigenvalues of $L(\cdot)$ and $i G_{A,R}$ coincide by definition. 
Of course, the unitary equivalence described in Theorem \ref{t2.4} also 
exhibits $(Q + S) (I_{\cH} \oplus P_{\ran(A)})$ as a global linearization 
of $L(\cdot)$. 

We note that even though the pencil $L(\cdot)$ has unbounded coefficients,
replacing $L(\cdot)$ by $L(\cdot)(A^*A + I_{\cH})^{-1}$ reduces matters to a 
pencil with bounded coefficients, in particular,  
\cite[Lemmas\ 20.1 and 20.2]{Ma88} apply to the spectrum of $L(\cdot)$ in 
this context.  

Finally, we note that the standard separation of variables argument, making the ansatz
\begin{equation}
u (t) = e ^{-i \lambda t} u(\lambda), \quad t \in \bbR, \; \lambda \in\bbC,   \lb{2.33}
\end{equation}
and inserting it into the equation 
\begin{equation}
\begin{pmatrix} u(t) \\ u_t (t) \end{pmatrix}_t = G_{A,R} \begin{pmatrix} u(t) \\ u_t (t) \end{pmatrix}, 
\quad t \geq 0,     \lb{2.34}
\end{equation}
 then yields of course the familiar eigenvalue problem
 \begin{equation}
i G_{A,R} \begin{pmatrix} u(\lambda) \\ - i \lambda u(\lambda) \end{pmatrix} 
= \lambda \begin{pmatrix} u(\lambda) \\ - i \lambda u(\lambda) \end{pmatrix},      
\lb{2.35}  
 \end{equation}
 compatible with \eqref{2.29} and \eqref{2.30}. 
 \end{remark}

\section{Supersymmetric Dirac-Type Operators}
\label{s3}

In this section we study self-adjoint supersymmetric Dirac-type operators $D$ 
of the type 
\begin{equation}
D = \begin{pmatrix} 0 & T^* \\ T & 0 \end{pmatrix}, \quad 
\dom(D) = \dom(T) \oplus \dom(T^*)   \lb{3.1}
\end{equation} 
in the Hilbert space 
\begin{equation} 
L^2([0,1]; \rho^2 dx)^2 = L^2([0,1]; \rho^2 dx) \oplus L^2([0,1]; \rho^2 dx) \simeq 
L^2([0,1]; \rho^2 dx) \otimes \bbC^2,    \lb{3.2}
\end{equation} 
where $T$ (and its adjoint $T^*$) are densely defined and closed first-order differential operators in 
$L^2([0,1]; \rho^2 dx)$ of the form
\begin{equation}
T = \f{i}{\rho} \f{d}{dx}, \quad T^* = \f{i}{\rho^2} \f{d}{dx} \rho,    \lb{3.3}
\end{equation}
with appropriate boundary conditions at $x=0,1$, and $\rho>0$ is an appropriate weight function, which, throughout this paper, will be assumed to 
satisfy the following conditions:

\begin{hypothesis}  \lb{h3.1}
Suppose that 
\begin{equation}
0 < \rho \in L^\infty([0,1]; dx), \quad 1/\rho \in L^\infty([0,1]; dx).    \lb{3.6}
\end{equation}
\end{hypothesis}

Assuming Hypothesis \ref{h3.1}, we now introduce the following concrete models for the 
operator $T$ in $L^2([0,1]; \rho^2 dx)$:
\begin{align}
& T_{\max} f = (i/\rho) f',     \lb{3.7} \\ 
& f\in \dom(T_{\max}) = \big\{g\in L^2([0,1]; \rho^2 dx) \, \big| \, g \in AC([0,1]);  \,       
g'  \in L^2([0,1]; \rho^2 dx)\big\},     \no \\
& T_{\min} f = (i/\rho) f',   \no \\
& f\in \dom(T_{\min}) = \big\{g\in L^2([0,1]; \rho^2 dx) \, \big| \, g \in AC([0,1]);  \, g(0)=0=g(1);     
\lb{3.8} \\
&  \hspace*{7.9cm} g'  \in L^2([0,1]; \rho^2 dx)\big\},   \no \\
& T_{0} f = (i/\rho) f',   \no \\
& f\in \dom(T_{0}) = \big\{g\in L^2([0,1]; \rho^2 dx) \, \big| \, g \in AC([0,1]);  \, g(0)=0;     
\lb{3.9} \\
&  \hspace*{6.45cm} g'  \in L^2([0,1]; \rho^2 dx)\big\},   \no \\
& T_{1} f = (i/\rho) f',   \no \\
& f\in \dom(T_{1}) = \big\{g\in L^2([0,1]; \rho^2 dx) \, \big| \, g \in AC([0,1]);  \, g(1)=0;     
\lb{3.10} \\
&  \hspace*{6.45cm} g'  \in L^2([0,1]; \rho^2 dx)\big\},   \no \\
& T_{\omega} f = (i/\rho) f',   \quad \omega \in\bbC\backslash\{0\},   \no \\
& f\in \dom(T_{\omega}) = \big\{g\in L^2([0,1]; \rho^2 dx) \, \big| \, g \in AC([0,1]); \, 
\, g(1)= \omega g(0);      \lb{3.11} \\
&  \hspace*{7.25cm}  g'  \in L^2([0,1]; \rho^2 dx)\big\},   \no
\end{align}
where $AC([0,1])$ denotes the set of absolutely continuous functions on $[0,1]$. 
Due to the conditions \eqref{3.6} on $\rho$, the fact that all operators in 
\eqref{3.7}--\eqref{3.10} are closed and densely defined in $L^2([0,1]; \rho^2 dx)$ 
parallels the well-known special case where $\rho =1$ a.e. 

The associated adjoint operators to \eqref{3.7}--\eqref{3.10} are then given by  
\begin{align}
& T_{\max}^*  f = (i/\rho^2) (\rho f)',   \no \\
& f\in \dom(T_{\max}^* ) = \big\{g\in L^2([0,1]; \rho^2 dx) \, \big| \, \rho g \in AC([0,1]);  
\, (\rho g)(0)=0=(\rho g)(1);     \no \\
&  \hspace*{7.3cm} (\rho g)'  \in L^2([0,1]; \rho^2 dx)\big\},   \lb{3.12} \\
& T_{\min}^*  f = (i/\rho^2) (\rho f)',     \lb{3.13} \\
& f\in \dom(T_{\min}^* ) = \big\{g\in L^2([0,1]; \rho^2 dx) \, \big| \, \rho g \in AC([0,1]); \, 
 (\rho g)' \in L^2([0,1]; \rho^2 dx)\big\},      \no \\
& T_0^* f = (i/\rho^2) (\rho f)',   \no \\
& f\in \dom(T_0^*) = \big\{g\in L^2([0,1]; \rho^2 dx) \, \big| \, \rho g \in AC([0,1]);  \, (\rho g)(1)=0;     
\lb{3.14} \\
&  \hspace*{6.7cm} (\rho g)' \in L^2([0,1]; \rho^2 dx)\big\},   \no \\
& T_1^* f = (i/\rho^2) (\rho f)',   \no \\
& f\in \dom(T_1^*) = \big\{g\in L^2([0,1]; \rho^2 dx) \, \big| \, \rho g \in AC([0,1]);  \, (\rho g)(0)=0;     
\lb{3.15} \\
&  \hspace*{6.7cm} (\rho g)' \in L^2([0,1]; \rho^2 dx)\big\},   \no \\
& T_{\omega}^*  f = (i/\rho^2) (\rho f)',   \quad \omega \in\bbC\backslash\{0\},   \no \\
& f\in \dom(T_{\omega}^* ) = \big\{g\in L^2([0,1]; \rho^2 dx) \, \big| \, \rho g \in AC([0,1]); \, 
\, (\rho g)(1)= (1/{\ol \omega}) (\rho g)(0);     \no \\
&  \hspace*{7.3cm}  (\rho g)' \in L^2([0,1]; \rho^2 dx)\big\}.    \lb{3.16}  
\end{align}

The sufficiency of the boundary conditions in \eqref{3.12}--\eqref{3.16} is clear from an elementary 
integration by parts
\begin{align}
& \int_0^1 \rho(x)^2  dx \, \ol{f(x)} [(1/\rho(x)) g'(x)] = \int_0^1 dx \, \ol{[\rho (x) f(x)]} g'(x)  \no \\
& \quad = \ol{[\rho (x) f(x)]} g(x)\big|_{0}^1 - \int_0^1 dx \, \ol{[\rho (x) f(x)]'} g(x)   \no \\
& \quad = \ol{[\rho (x) f(x)]} g(x)\big|_{0}^1 
- \int_0^1 \rho(x)^2 dx \, \ol{[1/\rho(x)^2] [\rho (x) f(x)]'} g(x).   \lb{3.17} 
\end{align}
We omit the details of the necessity of these boundary conditions which are well-known in the special case $\rho =1$ a.e. 

Replacing $T$ in \eqref{3.1} by $T_{\max}$, $T_{\min}$, $T_0$, $T_1$, and 
$T_{\omega}$, the corresponding self-adjoint Dirac-type operators in $L^2([0,1]; \rho^2 dx)^2$ are then denoted by $D_{\max}$, $D_{\min}$, $D_0$, $D_1$, 
and $D_{\omega}$, respectively.

Next we analyze the nullspaces of all these operators:

\begin{lemma}  \lb{l3.2}
Assume Hypothesis \ref{h3.1}. Then
\begin{align}
& \ker (T_{\max}) = \bbC, \quad \ker(T_{\max}^* ) = \{0\},     \lb{3.18} \\
& \ker (T_{\min}) = \{0\}, \quad \ker(T_{\min}^* ) = \{c/\rho \,|\, c \in\bbC\},     \lb{3.19} \\
& \ker (T_{0}) = \{0\}, \quad \ker(T_0^*) = \{0\},     \lb{3.20} \\
& \ker (T_{1}) = \{0\}, \quad \ker(T_1^*) = \{0\},     \lb{3.21} \\
& \ker (T_{\omega}) = \begin{cases} \{0\}, & \omega \in \bbC\backslash\{0,1\}, \\ 
\bbC, & \omega =1, \end{cases} 
\quad \ker(T_{\omega}^* ) = \begin{cases} \{0\}, & \omega \in \bbC\backslash\{0,1\}, \\
\{c/\rho \,|\, c\in\bbC\}, & \omega = 1, \end{cases}       \lb{3.22} 
\end{align} 
and hence\footnote{For simplicity, $0$ denotes the zero vector in 
$L^2([0,1]; \rho^2 dx)$ as well as in $L^2([0,1]; \rho^2 dx)^2$.} 
\begin{align}
& \ker (D_{\max}) = \bbC \oplus \{0\},     \lb{3.23} \\
& \ker (D_{\min}) = \{0\} \oplus \{c/\rho \,|\, c \in\bbC\},     \lb{3.24} \\
& \ker (D_{0}) = \{0\},     \lb{3.25} \\
& \ker (D_{1}) = \{0\},     \lb{3.26} \\
& \ker (D_{\omega}) = \begin{cases} \{0\}, & \omega \in \bbC\backslash\{0,1\}, \\ 
\bbC \oplus \{c/\rho \,|\, c\in\bbC\}, & \omega =1. \end{cases}    \lb{3.27}
\end{align}
\end{lemma}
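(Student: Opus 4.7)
The plan is to handle each kernel computation by first solving the pointwise ODE ($f' = 0$ or $(\rho f)' = 0$) and then imposing the relevant boundary condition from the domain definitions \eqref{3.7}--\eqref{3.16}, finally assembling the Dirac kernels from the block structure.

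First I would compute $\ker(T_{\#})$ for $\# \in \{\max, \min, 0, 1, \omega\}$. Any $f \in \dom(T_{\#})$ lies in $AC([0,1])$, so $T_{\#} f = (i/\rho) f' = 0$ forces $f' = 0$ a.e., i.e., $f \equiv c$ is constant. Since $\rho \in L^\infty([0,1];dx)$ by Hypothesis \ref{h3.1}, every constant lies in $L^2([0,1];\rho^2 dx)$, so the only question is which constants satisfy the boundary conditions. For $T_{\max}$ there are none, giving $\bbC$; for $T_{\min}$, $T_0$, $T_1$ the condition $g(0) = 0$ or $g(1) = 0$ forces $c = 0$; for $T_\omega$, the relation $c = \omega c$ gives $c = 0$ unless $\omega = 1$, in which case any constant qualifies. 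This produces \eqref{3.18}--\eqref{3.22} on the $T$-side.

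Next I would compute $\ker(T_{\#}^*)$. For $f \in \dom(T_{\#}^*)$, $\rho f \in AC([0,1])$ and $T_{\#}^* f = (i/\rho^2)(\rho f)' = 0$ forces $(\rho f)' = 0$, hence $\rho f \equiv c$, i.e., $f = c/\rho$. Since $1/\rho \in L^\infty([0,1];dx)$, $c/\rho \in L^2([0,1];\rho^2 dx)$ (indeed $\int_0^1 \rho^2 |c/\rho|^2 dx = |c|^2$). The boundary conditions now act on $\rho g$: for $T_{\max}^*$, $(\rho g)(0) = (\rho g)(1) = 0$ forces $c = 0$; for $T_{\min}^*$ there is no boundary constraint so all $c \in \bbC$ occur; for $T_0^*$ and $T_1^*$ one endpoint condition on $\rho g$ forces $c = 0$; for $T_\omega^*$ the relation $c = (1/\bar\omega) c$ forces $c = 0$ unless $\omega = 1$. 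This yields the stated $\ker(T_{\#}^*)$.

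Finally, the Dirac kernels follow at once from the block form of $D$: for any choice of boundary condition ``bc'',
\begin{equation}
D_{\rm bc} \begin{pmatrix} f \\ g \end{pmatrix} = \begin{pmatrix} T_{\rm bc}^* g \\ T_{\rm bc}^{} f \end{pmatrix},
\end{equation}
so $\ker(D_{\rm bc}) = \ker(T_{\rm bc}^{}) \oplus \ker(T_{\rm bc}^*)$; substituting the results from the previous two paragraphs yields \eqref{3.23}--\eqref{3.27}. There is essentially no obstacle here — the argument is a routine unwinding of the domain definitions; the only minor point to be careful about is to verify that the candidate constant or $c/\rho$ indeed lies in the prescribed domain (the $L^2$ and $AC$ requirements, which are immediate from Hypothesis \ref{h3.1}), so that the conclusion is ``the only elements in the kernel are those,'' not ``the only candidates from the ODE are those.''
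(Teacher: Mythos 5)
Your proposal is correct and takes essentially the same route as the paper: solve the first-order ODE in the distributional sense (forcing the kernel candidates to be $c$ for $T$ and $c/\rho$ for $T^*$), check which candidates satisfy the boundary conditions in each domain, verify membership in $L^2([0,1];\rho^2\,dx)$ using Hypothesis \ref{h3.1}, and then assemble $\ker(D_{\rm bc}) = \ker(T_{\rm bc}) \oplus \ker(T_{\rm bc}^*)$ from the block structure (which the paper cites as \eqref{A.21}).
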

\begin{proof}
Since 
\begin{equation}
\f{i}{\rho(x)} \bigg(\f{d}{dx} c\bigg) = 0, \quad 
\f{i}{\rho(x)^2} \bigg[\f{d}{dx}\bigg( \rho(x) \bigg(\f{c}{\rho(x)}\bigg)\bigg)\bigg] = 0, \quad c\in\bbC,    \lb{3.28}
\end{equation}
in the sense of distributions on $(0,1)$, and in each of these cases the operators $T$ 
and $T^*$ are of first order and hence have at most a one-dimensional nullspace, one 
concludes that $\bbC \subset L^2([0,1]; \rho^2 dx)$ (resp.,  
$\{c/\rho\,|\, c\in\bbC\} \subset L^2([0,1]; \rho^2 dx)$) are the kernels for $T$ (resp.,  
$T^*$) if and only if $c$ (resp., $c/\rho$) satisfy the boundary conditions 
in $T$ (resp., $T^*$). Checking whether the boundary conditions are fulfilled is 
elementary and yields \eqref{3.18}--\eqref{3.22}.

Relations \eqref{3.23}--\eqref{3.27} then follow from the general fact  
that $\ker(D) = \ker(T) \oplus \ker(T^*)$ (cf.\ \eqref{A.21}). 
\end{proof}

We continue by listing the corresponding operators $T^* T$:
\begin{align}
& T_{\max}^*  T_{\max} f = -(1/\rho^2) f'',   \no \\
& f \in \dom(T_{\max}^*  T_{\max}^{} ) =  \big\{g \in L^2([0,1]; \rho^2 dx) \,\big|\, g, g' \in AC([0,1]);  \lb{3.29} \\ 
& \hspace*{3.65cm}  g'(0) = 0 = g'(1); \, g'' \in L^2([0,1]; \rho^2 dx)\big\},  \no \\ 
& T_{\min}^*  T_{\min}^{}  f = -(1/\rho^2) f'',   \no \\
& f \in \dom(T_{\min}^*  T_{\min}^{} ) =  \big\{g \in L^2([0,1]; \rho^2 dx) \,\big|\, g, g' \in AC([0,1]);  \lb{3.30} \\ 
& \hspace*{3.7cm}  g(0) = 0 = g(1); \, g'' \in L^2([0,1]; \rho^2 dx)\big\},  \no \\ 
& T_0^* T_{0}^{} f = -(1/\rho^2) f'',   \no \\
& f \in \dom(T_0^* T_{0}^{}) =  \big\{g \in L^2([0,1]; \rho^2 dx) \,\big|\, g, g' \in AC([0,1]);  \lb{3.31} \\ 
& \hspace*{3.05cm}  g(0) = 0 = g'(1); \, g'' \in L^2([0,1]; \rho^2 dx)\big\},  \no \\ 
& T_1^* T_{1}^{} f = -(1/\rho^2) f'',   \no \\
& f \in \dom(T_1^* T_{1}^{}) =  \big\{g \in L^2([0,1]; \rho^2 dx) \,\big|\, g, g' \in AC([0,1]);  \lb{3.32} \\ 
& \hspace*{3.05cm}  g'(0) = 0 = g(1); \, g'' \in L^2([0,1]; \rho^2 dx)\big\},  \no \\ 
& T_{\omega}^*  T_{\omega}^{}  f = -(1/\rho^2) f'',   \no \\
& f \in \dom(T_{\omega}^*  T_{\omega}^{} ) =  \big\{g \in L^2([0,1]; \rho^2 dx) \,\big|\, g, g' \in AC([0,1]);  
\lb{3.33} \\ 
& \hspace*{3.05cm}  g(1) = \omega g(0), \, g'(1) = (1/{\ol \omega}) g'(0); \, 
g'' \in L^2([0,1]; \rho^2 dx)\big\}.  \no 
\end{align}
Analogously, one obtains for the operators $T T^*$: 
\begin{align}
& T_{\max}^{}  T_{\max}^*  f = -(1/\rho) [(1/\rho^2)(\rho f)']',   \no \\
& f \in \dom(T_{\max}^*  T_{\max}^{} ) =  \big\{g \in L^2([0,1]; \rho^2 dx) \,\big|\, \rho g, 
\rho^{-2} (\rho g)' \in AC([0,1]);  \lb{3.34} \\ 
& \hspace*{2.5cm}  (\rho g)(0) = 0 = (\rho g)(1); \, 
[(1/\rho^2)(\rho g)']' \in L^2([0,1]; \rho^2 dx)\big\},  \no \\  
& T_{\min}^{}  T_{\min}^*  f = -(1/\rho) [(1/\rho^2)(\rho f)']',   \no \\
& f \in \dom(T_{\min}^*  T_{\min}^{} ) =  \big\{g \in L^2([0,1]; \rho^2 dx) \,\big|\, \rho g, 
\rho^{-2} (\rho g)' \in AC([0,1]);  \lb{3.35} \\ 
& \hspace*{2.2cm}  (\rho g)'(0) = 0 = (\rho g)'(1); \, 
[(1/\rho^2)(\rho g)']' \in L^2([0,1]; \rho^2 dx)\big\},  \no \\ 
& T_{0} T_0^* f = -(1/\rho) [(1/\rho^2)(\rho f)']',   \no \\
& f \in \dom(T_0^* T_{0}^{}) =  \big\{g \in L^2([0,1]; \rho^2 dx) \,\big|\, \rho g, 
\rho^{-2} (\rho g)' \in AC([0,1]);  \lb{3.36} \\ 
& \hspace*{1.7cm}  (\rho g)'(0) = 0 = (\rho g)(1); \, 
[(1/\rho^2)(\rho g)']' \in L^2([0,1]; \rho^2 dx)\big\},  \no \\  
& T_{1}^{} T_1^* f = -(1/\rho) [(1/\rho^2)(\rho f)']',   \no \\
& f \in \dom(T_1^* T_{1}^{}) =  \big\{g \in L^2([0,1]; \rho^2 dx) \,\big|\, \rho g, 
\rho^{-2} (\rho g)' \in AC([0,1]);  \lb{3.37} \\ 
& \hspace*{1.7cm}  (\rho g)(0) = 0 = (\rho g)'(1); \, 
[(1/\rho^2)(\rho g)']' \in L^2([0,1]; \rho^2 dx)\big\},  \no \\  
& T_{\omega}^{}  T_{\omega}^*  f = -(1/\rho) [(1/\rho^2)(\rho f)']',   \no \\
& f \in \dom(T_{\omega}^*  T_{\omega}^{} ) =  \big\{g \in L^2([0,1]; \rho^2 dx) \,\big|\, \rho g, 
\rho^{-2} (\rho g)' \in AC([0,1]);  \lb{3.38} \\ 
& \hspace*{.4cm}  (\rho g)(1) = (1/{\ol \omega}) (\rho g)(0), \, (\rho g)'(1)= \omega (\rho g)'(1); \, 
[(1/\rho^2)(\rho g)']' \in L^2([0,1]; \rho^2 dx)\big\}.  \no 
\end{align}

One recalls that the spectra of all these operators $T^* T$ and $T T^*$ in 
\eqref{3.29}--\eqref{3.38} are purely discrete, that is, 
\begin{align}
& \sigma(T_{\max, \min, 0,1,\omega}^* T_{\max, \min, 0,1,\omega}^{}) 
= \sigma_{\rm d} (T_{\max, \min, 0,1,\omega}^* T_{\max, \min, 0,1,\omega}^{}),  \no \\ 
&\sigma_{\rm ess} (T_{\max, \min, 0,1,\omega}^* T_{\max, \min, 0,1,\omega}^{}) = \emptyset,    \lb{3.38A}  \\
& \sigma(T_{\max, \min, 0,1,\omega}^{} T_{\max, \min, 0,1,\omega}^*) 
= \sigma_{\rm d} (T_{\max, \min, 0,1,\omega}^{} T_{\max, \min, 0,1,\omega}^*),   \no \\ 
& \sigma_{\rm ess} (T_{\max, \min, 0,1,\omega}^{} T_{\max, \min, 0,1,\omega}^*) = \emptyset,   \lb{3.38a} 
\end{align}
in addition, we have of course (cf.\ \eqref{A.8} and \eqref{A.10})
\begin{equation}
\sigma(T_{\max, \min, 0,1,\omega}^* T_{\max, \min, 0,1,\omega}^{})\backslash\{0\} 
= \sigma(T_{\max, \min, 0,1,\omega}^{} T_{\max, \min, 0,1,\omega}^*)\backslash\{0\}     \lb{3.38b}
\end{equation}
and 
\begin{equation}
\ker(T^* T^{}) = \ker(T), \quad \ker(T^{} T^*) = \ker(T^*).    \lb{3.38ba}
\end{equation}
Here, and occasionally later on, $T$ stands for $T_{\max}$, $T_{\min}$, $T_{0}$, $T_{1}$, or $T_{\omega}$. 

For subsequent purpose we next describe the connection between 
$T_{\max, \min, 0,1,\omega}$ and $T_{\max, \min, 0,1,\omega}^*$ 
in the Hilbert space $L^2([0,1]; \rho^2 dx)$ and the corresponding operators 
\begin{align}
\begin{split}
\bigg(i \f{d}{dx}\bigg)_{\max, \min, 0,1,\omega} 
&= T_{\max, \min, 0,1,\omega} (\rho \equiv 1) \, \text{ in } L^2([0,1]; dx),  \lb{3.88c} \\
\bigg(i \f{d}{dx}\bigg)_{\max, \min, 0,1,\omega}^* 
&= T_{\max, \min, 0,1,\omega}^* (\rho \equiv 1) \, \text{ in } L^2([0,1]; dx), 
\end{split}
\end{align} 
taking $\rho\equiv 1$ in \eqref{3.7}--\eqref{3.16}. Introducing the unitary operator 
$U_{\rho}$ via 
\begin{equation}
U_{\rho} \colon \begin{cases} L^2([0,1]; \rho^2 dx) \to  L^2([0,1]; dx) \\
\hspace*{2.05cm}  f \mapsto \rho f, \end{cases}     \lb{3.38d}
\end{equation}
one obtains
\begin{align}
\begin{split}
& U_{\rho} T_{\max, \min, 0,1,\omega} U_{\rho}^{-1} 
= \bigg(i \f{d}{dx}\bigg)_{\max, \min, 0,1,\omega} M_{1/\rho}, \\ 
& U_{\rho} T_{\max, \min, 0,1,\omega}^* U_{\rho}^{-1} 
= M_{1/\rho} \bigg(i \f{d}{dx}\bigg)_{\max, \min, 0,1,\omega}^*,    \lb{3.38e}
\end{split}
\end{align}
where $M_w$ denotes the bounded operator of multiplication by the function $w \in L^\infty([0,1]; dx)$ in the Hilbert space $L^2([0,1]; dx)$, and hence,
\begin{align}
& U_{\rho} T_{\max, \min, 0,1,\omega}^* T_{\max, \min, 0,1,\omega}^{} U_{\rho}^{-1} 
= M_{1/\rho} \bigg(- \f{d^2}{dx^2}\bigg)_{\max, \min, 0,1,\omega} M_{1/\rho}, 
\lb{3.38f} \\ 
& U_{\rho} T_{\max, \min, 0,1,\omega}^{} T_{\max, \min, 0,1,\omega}^* U_{\rho}^{-1} 
= \bigg(i \f{d}{dx}\bigg)_{\max, \min, 0,1,\omega}
M_{1/\rho^2} \bigg(i \f{d}{dx}\bigg)_{\max, \min, 0,1,\omega}^*,    \no
\end{align}
where we abbreviated 
\begin{align}
\bigg(- \f{d^2}{dx^2}\bigg)_{\max, \min, 0,1,\omega}
& = \bigg(i \f{d}{dx}\bigg)_{\max, \min, 0,1,\omega}^*
\bigg(i \f{d}{dx}\bigg)_{\max, \min, 0,1,\omega}   \no \\
& = T_{\max, \min, 0,1,\omega}^* (\rho\equiv 1)  
T_{\max, \min, 0,1,\omega}^{} (\rho\equiv 1)    \lb{3.38g}
\end{align}
in $L^2([0,1]; dx)$, which are obtained as in \eqref{3.29}--\eqref{3.33}, taking 
$\rho \equiv 1$. 

Both relations in \eqref{3.38e} immediately follow from a consideration of 
\begin{equation}
(f, T g)_{L^2([0,1]; \rho^2 dx)}, \quad f \in L^2([0,1]; \rho^2 dx), \;   
g \in \dom (T)   \lb{3.38h}
\end{equation}
respectively, 
\begin{equation}
(f, T^* g)_{L^2([0,1]; \rho^2 dx)}, \quad f \in L^2([0,1]; \rho^2 dx), \; g \in \dom (T^*).  
\lb{3.38i}
\end{equation}

Relation \eqref{3.38f} implies the following trace ideal result:

\begin{theorem} \lb{t3.3}
Assume Hypothesis \ref{h3.1} and let 
\begin{equation}
z \in \rho(T_{\max, \min, 0,1,\omega}^* T_{\max, \min, 0,1,\omega}^{})
\cap \rho(T_{\max, \min, 0,1,\omega}^{} T_{\max, \min, 0,1,\omega}^*).    \lb{3.38j}
\end{equation}
Then 
\begin{align}
\begin{split}
& \big(T_{\max, \min, 0,1,\omega}^* T_{\max, \min, 0,1,\omega}^{} - z I\big)^{-1} 
\in \cB_1\big(L^2([0,1]; \rho^2 dx)\big),   \\
& \big(T_{\max, \min, 0,1,\omega}^{} T_{\max, \min, 0,1,\omega}^* - z I\big)^{-1} 
\in \cB_1\big(L^2([0,1]; \rho^2 dx)\big).   \lb{3.38k}
\end{split}
\end{align}
and 
\begin{align}
\begin{split}
& T_{\max, \min, 0,1,\omega} 
\big(T_{\max, \min, 0,1,\omega}^* T_{\max, \min, 0,1,\omega}^{} - z I\big)^{-1} 
\in \cB_2\big(L^2([0,1]; \rho^2 dx)\big),   \\
& T_{\max, \min, 0,1,\omega}^*
\big(T_{\max, \min, 0,1,\omega}^{} T_{\max, \min, 0,1,\omega}^* - z I\big)^{-1} 
\in \cB_2\big(L^2([0,1]; \rho^2 dx)\big).   \lb{3.38l}
\end{split}
\end{align}
\end{theorem}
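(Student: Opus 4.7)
The plan is to reduce the trace ideal statements to the well-known unweighted case $\rho\equiv 1$ via the unitary equivalence \eqref{3.38f}, and then exploit the fact that each of the boundary condition realizations of $-d^2/dx^2$ on $[0,1]$ is a regular Sturm--Liouville operator with compact, trace class resolvent. The cross-term estimates \eqref{3.38l} then follow by a standard Hilbert--Schmidt norm computation combined with the second resolvent identity.

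More precisely, I would first argue that for each bc $\in\{\max,\min,0,1,\omega\}$, the self-adjoint realization $(-d^2/dx^2)_{\rm bc}$ in $L^2([0,1];dx)$ has purely discrete spectrum with eigenvalue asymptotics $\lambda_{{\rm bc},n}\sim c\,n^2$, which is classical for all of these regular separated/coupled boundary conditions, and hence $((-d^2/dx^2)_{\rm bc}-zI)^{-1}\in\cB_1(L^2([0,1];dx))$ for every $z$ in the resolvent set. By the unitary equivalence \eqref{3.38f}, $T_{\rm bc}^*T_{\rm bc}^{}$ is similar (in fact unitarily equivalent after conjugation by $U_\rho$) to $M_{1/\rho}(-d^2/dx^2)_{\rm bc}M_{1/\rho}$, which is a bounded two-sided perturbation of $(-d^2/dx^2)_{\rm bc}$ preserving the Weyl-type eigenvalue asymptotics (since $\rho,\rho^{-1}\in L^\infty$ bound the associated quadratic forms from above and below by positive multiples of the Dirichlet form). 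Consequently $T_{\rm bc}^*T_{\rm bc}^{}$ itself has discrete spectrum $\{\mu_n\}$ with $\mu_n\sim c'n^2$, so in its eigenbasis the self-adjoint operator $(T_{\rm bc}^*T_{\rm bc}^{}-zI)^{-1}$ is diagonal with eigenvalues $(\mu_n-z)^{-1}$, summable in $n$; this proves the first assertion in \eqref{3.38k}. The corresponding assertion for $T_{\rm bc}^{}T_{\rm bc}^*$ then follows from \eqref{3.38b}--\eqref{3.38ba}, which forces the two operators to share all nonzero eigenvalues with multiplicity, so one resolvent is trace class iff the other is.

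For the Hilbert--Schmidt statements \eqref{3.38l}, I would compute directly in the orthonormal eigenbasis $\{\psi_n\}$ of $T_{\rm bc}^*T_{\rm bc}^{}$: since $\|T_{\rm bc}^{}\psi_n\|^2=(T_{\rm bc}^*T_{\rm bc}^{}\psi_n,\psi_n)=\mu_n$, one obtains
\begin{equation}
\big\|T_{\rm bc}^{}\big(T_{\rm bc}^*T_{\rm bc}^{}-zI\big)^{-1}\big\|_{\cB_2}^2=\sum_n\frac{\mu_n}{|\mu_n-z|^2}<\infty,
\end{equation}
since the summand behaves like $\mu_n^{-1}\sim n^{-2}$. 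Equivalently, one can write
\begin{equation}
\big\|T_{\rm bc}^{}\big(T_{\rm bc}^*T_{\rm bc}^{}-zI\big)^{-1}\big\|_{\cB_2}^2
=\tr\Big(\big(T_{\rm bc}^*T_{\rm bc}^{}-\bar z I\big)^{-1}\big[I+z\big(T_{\rm bc}^*T_{\rm bc}^{}-zI\big)^{-1}\big]\Big),
\end{equation}
where each summand is a product involving a trace class resolvent and a bounded factor, hence trace class with finite trace. The second inclusion in \eqref{3.38l} is handled identically with the roles of $T_{\rm bc}^{}$ and $T_{\rm bc}^*$ interchanged.

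The main technical point is verifying the eigenvalue asymptotics $\mu_n\sim c'n^2$ uniformly across the five boundary condition types under the mere $L^\infty$ hypothesis \eqref{3.6} on $\rho$ (in particular, without smoothness), which is where one wants to invoke either an explicit comparison of quadratic forms (the form of $T_{\rm bc}^*T_{\rm bc}^{}$ in $L^2([0,1];\rho^2 dx)$ is $\int_0^1|f'|^2\,dx$ on the appropriate form domain, pinched between constants times $\int|f'|^2\rho^2\,dx$ and $\int|f'|^2\,dx$ via $\rho,\rho^{-1}\in L^\infty$, followed by the min-max principle) or the classical continuity of the explicit Green's function on $[0,1]^2$ together with Mercer-type arguments; given the paper's setup I would favor the form/min-max route since it handles all five boundary conditions uniformly.
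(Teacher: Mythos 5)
Your proof is correct and follows essentially the same route as the paper: transfer the $O(n^2)$ eigenvalue asymptotics of the $\rho\equiv 1$ Sturm--Liouville realizations through the conjugation \eqref{3.38f} to obtain \eqref{3.38k}, then derive \eqref{3.38l} from the resulting eigenvalue decay of $|T|=(T^*T)^{1/2}$. Where you supply an explicit min-max/Rayleigh-quotient comparison (using $\rho,\rho^{-1}\in L^\infty$ to pinch the Rayleigh quotients between constant multiples of the unweighted ones) to justify that the $M_{1/\rho}$-conjugation preserves the quadratic growth of eigenvalues---a step the paper states only tersely as ``follows from \eqref{3.38f}''---and then compute the Hilbert--Schmidt norm directly in the eigenbasis as $\sum_n \mu_n/|\mu_n-z|^2<\infty$, the paper instead packages \eqref{3.38l} through the polar decomposition $T=V_T|T|$ and the two-sided ideal property of $\cB_2$ as in \eqref{3.38n}; these are interchangeable and both close the argument.
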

\begin{proof}
Since \eqref{3.38k}, in the case of 
$T_{\max, \min, 0,1,\omega}^* T_{\max, \min, 0,1,\omega}^{}$, is well-known if $\rho \equiv 1$ (in which case one can explicitly determine the asymptotics of the $n$th eigenvalue involved 
as $\Oh(n^2)$ as $n\to\infty$), \eqref{3.38k} follows from \eqref{3.38f} and the fact that the nonzero eigenvalues of $(T^* T)^{1/2}$ and $(T T^*)^{1/2}$ 
(i.e., the singular values of $T$ and $T^*$, respectively) coincide, as recalled in 
\eqref{3.38b}. Using the familiar polar decomposition formulas 
\begin{equation}
T = V_T |T|, \quad T^* = (V_T)^* |T^*|, \, \text{ where } \,  |T| = (T^* T)^{1/2}, 
\; |T^*| = (T T^*)^{1/2},  
\lb{3.38m}
\end{equation}
for any closed densely defined operator $T$ in $\cH$, with $V_T$ a partial isometry 
in $\cH$ (cf.\ \cite[Sect.\ VI.2.7]{Ka80}), one obtains 
\begin{align}
\begin{split}
T(T^* T -z I_{\cH})^{-1} &= V_T |T| \big(|T|^2 - z I_{\cH}\big)^{-1},   \\ 
T^*(T T^* -z I_{\cH})^{-1} &= (V_T)^* |T^*| \big(|T^*|^2 - z I_{\cH}\big)^{-1},  \lb{3.38n}
\end{split}
\end{align}
proving \eqref{3.38l}.
\end{proof}

Of course, \eqref{3.38k} and \eqref{3.38l} extend to all $z$ in the corresponding resolvent set of $T^* T$ and $T T^*$, respectively. 

\medskip
 
We conclude this section by listing the Green's functions (i.e., the integral kernels of 
the resolvent of) $T^*T$ at $z=0$ whenever the corresponding nullspaces 
are trivial (cf.\ \eqref{3.18}--\eqref{3.22}). We write\footnote{We denote by $I$ the identity operator in $L^2([0,1]; \rho^2 dx)$) and similarly, by $I_2$ the identity operator in $L^2([0,1]; \rho^2 dx)^2$.}
\begin{align} 
\big((T^* T -z I)^{-1} f\big)(x) = \int_0^1 \rho(x')^2 dx' \, G_{T^* T}(z,x,x') f(x'),&   
\lb{3.39} \\
z \in \rho(T^*T), \; f \in L^2([0,1]; \rho^2 dx),&   \no 
\end{align}
where $T$ represents one of $T_{\min, 0,1,\omega}$, and record the following result:

\begin{lemma} \lb{l3.4}
Assume Hypothesis \ref{h3.1}. Then,
\begin{align}
& G_{T_{\min}^*  T_{\min}^{} } (0,x,x') = \begin{cases} x(1-x'), & 0 \leq x \leq x' \leq 1, \\ 
x'(1-x), & 0 \leq x' \leq x \leq 1, \end{cases}    \lb{3.40} \\ 
& G_{T_0^* T_{0}^{}} (0,x,x') = \begin{cases} x, & 0 \leq x \leq x' \leq 1, \\ 
x', & 0 \leq x' \leq x \leq 1, \end{cases}    \lb{3.41} \\ 
& G_{T_1^* T_{1}^{}} (0,x,x') = \begin{cases} (1-x), & 0 \leq x \leq x' \leq 1, \\ 
(1-x'), & 0 \leq x' \leq x \leq 1, \end{cases}    \lb{3.42} \\
& G_{T_{\omega}^*  T_{\omega}^{} } (0,x,x') = -\f{1}{2} |x-x'| 
- \f{1 + \omega - {\ol \omega} - |\omega|^2}{2|1-\omega|^2} x 
- \f{1 - \omega + {\ol \omega} - |\omega|^2}{2|1-\omega|^2} x'     \no \\
& \hspace*{2.65cm} + \f{1}{|1-\omega|^2}, \quad \omega 
\in \bbC\backslash\{0,1\}.   \lb{3.43}
\end{align}
\end{lemma}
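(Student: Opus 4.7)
The plan is to characterize $G_{T^*T}(0,\cdot,\cdot)$ as the integral kernel of $(T^*T)^{-1}$, then solve the resulting distributional ODE case by case. Since in each of the four cases $T^*T$ acts by $-(1/\rho^2)\partial_x^2$ and, by Lemma \ref{l3.2}, has trivial kernel, the operator $T^*T$ is boundedly invertible and the Green's function at $z=0$ exists and is unique. A short computation shows that the representation
\begin{equation*}
((T^*T)^{-1} f)(x) = \int_0^1 \rho(x')^2 \, G_{T^*T}(0,x,x') \, f(x') \, dx'
\end{equation*}
combined with $-u''/\rho^2 = f$ forces the $\rho$-weights to cancel at $x = x'$, so that $G_{T^*T}(0,\cdot,x')$ must satisfy $-\partial_x^2 G = \delta(x-x')$ in $\cD'(0,1)$ together with the boundary conditions inherited from \eqref{3.30}--\eqref{3.33}.

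Next, I would construct $G$ piecewise. On each of the intervals $(0,x')$ and $(x',1)$ the function $G(\cdot,x')$ is affine in $x$; the two linking conditions (continuity at $x'$ and the jump $[\partial_x G(\cdot,x')]_{x'-}^{x'+} = -1$) together with the two boundary conditions give four linear equations in four unknowns. The Dirichlet case $T_{\min}^*T_{\min}^{}$ is immediate with the ansatz $G = \alpha x$ on $[0,x']$ and $G = \beta(1-x)$ on $[x',1]$, which yields $\alpha = 1-x'$, $\beta = x'$, and hence \eqref{3.40}. For $T_0^*T_0^{}$ and $T_1^*T_1^{}$ the Neumann endpoint forces one branch to be constant, and the same procedure produces \eqref{3.41} and \eqref{3.42} with minimal algebra.

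The main obstacle is the quasi-periodic case $T_\omega^*T_\omega^{}$ for $\omega\in\bbC\setminus\{0,1\}$, where both affine branches carry two genuinely free parameters and the four linear equations couple real and complex data through $G(1,x') = \omega G(0,x')$ and $\partial_x G(1,x') = \bar\omega^{-1}\partial_x G(0,x')$. Unique solvability of the resulting $4\times 4$ system rests precisely on the determinantal nonvanishing that fails exactly at $\omega = 1$ (in agreement with \eqref{3.22} and the need to exclude $\omega=1$ in \eqref{3.43}). After solving, I would peel off the absolute-value piece by writing $G = -|x-x'|/2 + Ax + Bx' + C$, extract $A$, $B$, $C$ by matching with the explicit piecewise formula, and rationalize the denominators $1-\omega$ and $1-\bar\omega$ using $|1-\omega|^2 = (1-\omega)(1-\bar\omega)$ to arrive at the symmetric form \eqref{3.43}. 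Finally, as a built-in consistency check, the Hermitian symmetry $G(x,x') = \overline{G(x',x)}$ forced by self-adjointness of $T_\omega^*T_\omega^{}$ translates into $A = \overline{B}$ and $C = \overline{C}$ for the coefficients appearing in \eqref{3.43}.
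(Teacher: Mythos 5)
Your proposal is correct and follows the same underlying idea as the paper: reduce to the distributional equation $-\partial_x^2 G(\cdot,x')=\delta(\cdot-x')$ (after observing that the $\rho^2$-weights cancel) together with the boundary conditions inherited from the domains of $T^*T$. The difference is organizational and concerns only the quasi-periodic case. You propose a uniform ``affine on $(0,x')$ and on $(x',1)$, four unknowns, four linear conditions'' construction for all four cases and then, a posteriori, rewrite the $\omega$ result as $-\tfrac12|x-x'|+Ax+Bx'+C$. The paper instead takes the reference kernel $-\tfrac12|x-x'|$ as the starting point for $T_\omega^*T_\omega^{}$: it subtracts $g(x;f)=-\tfrac12\int_0^1\rho(x')^2|x-x'|f(x')\,dx'$, observes $-(1/\rho^2)\partial_x^2[g_\omega-g]=0$, so $g_\omega-g=c_1(f)+c_2(f)x$ with only two unknowns, which are then fixed by the two quasi-periodic boundary conditions via Cramer's rule. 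Both routes are valid and of comparable length; the paper's buys a $2\times 2$ rather than $4\times 4$ system for the nonseparated case, while yours has the advantage of being entirely uniform across all four boundary conditions. Your remark that unique solvability of the linear system fails exactly at $\omega=1$ and that Hermitian symmetry forces $A=\overline{B}$, $C\in\bbR$ are correct and useful sanity checks, consistent with \eqref{3.22} and the explicit coefficients in \eqref{3.43}.
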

\begin{proof}
In all cases depicted in \eqref{3.40}--\eqref{3.43}, one has 
$(T^* T)^{-1} \in \cB(L^2([0,1]; \rho^2 dx))$ and hence $G_{T^* T}(0,x,x')$ is 
well-defined. Since $T^* T$ is of the type $-(1/\rho^2) (d^2/dx^2)$, one infers 
that for all $c, d \in\bbC$, $T^* T (c + dx) =0$ is valid in the sense of distributions, and hence \eqref{3.40}--\eqref{3.42} readily follow from the separated boundary conditions imposed in \eqref{3.30}, \eqref{3.31}, and \eqref{3.32}. The nonseparated boundary conditions in $T_{\omega}^*  T_{\omega}^{} $ in 
\eqref{3.33} require a slightly different strategy: Introducing  
\begin{align}
g_{\omega} (x; f) &= \int_0^1 \rho(x')^2 dx' \, 
G_{T_{\omega}^*  T_{\omega}^{} }(0,x,x') f(x'), \quad f \in L^2([0,1]; \rho^2 dx), 
\lb{3.44}  \\
g (x; f) &= - \f{1}{2} \int_0^1 \rho(x')^2 dx' \, 
|x-x'| f(x'), \quad f \in L^2([0,1]; \rho^2 dx),    \lb{3.45}
\end{align}
one infers that
\begin{equation}
- \f{1}{\rho (x)^2} \f{d^2}{dx^2} [g_{\omega}(x; f) - g(x; f)] = 0,
\end{equation}
and hence,
\begin{equation}
g_{\omega}(x; f) = g(x; f) + c_1(f) + c_2(f) x, \quad x \in [0,1],
\end{equation}
for some coefficients $c_k(f) \in\bbC$, $k=1,2$.

Since by definition, $g_{\omega}(\cdot; f) \in \dom (T_{\omega}^*  T_{\omega}^{} )$, the boundary conditions (cf.\ \eqref{3.33})
\begin{equation}
g_{\omega} (1; f) = \omega g_{\omega} (0; f), \quad 
g'_{\omega} (1; f) = (1/{\ol \omega}) g'_{\omega} (0; f), 
\end{equation}
and Cramer's rule determine $c_k(f)$, $k=1,2$, implying  
\eqref{3.43}.
\end{proof}

\section{Non-Self-Adjoint Perturbations of Supersymmetric \\ Dirac-Type 
Operators}   \label{s4}

In this section we present an abstract result\footnote{The notation used 
in this abstract part is also employed in Appendix \ref{sA}.}, the computation  
of the resolvent of a non-self-adjoint perturbation of special diagonal type of a supersymmetric Dirac-type operator.

\begin{hypothesis} \lb{h4.1}
Let $\cH_j$, $j=1,2$, be separable complex Hilbert spaces and let 
\begin{equation}
A: \dom(A) \subseteq \cH_1 \to \cH_2    \lb{4.1}
\end{equation}
be a densely defined closed linear operator. In addition, assume that 
\begin{equation}
V \in \cB(\cH_1).     \lb{4.2}
\end{equation}
\end{hypothesis}

Given Hypothesis \ref{h4.1}, we define the self-adjoint supersymmetric Dirac-type operator $Q$ in $\cH_1 \oplus \cH_2$ by 
\begin{equation}
Q = \begin{pmatrix} 0 & A^* \\ A & 0 \end{pmatrix}, \quad 
\dom(Q) = \dom(A) \oplus \dom(A^*),      \lb{4.3}
\end{equation}
introduce the special diagonal operator $W \in \cB(\cH_1 \oplus \cH_2)$ via  
\begin{equation}
W = \begin{pmatrix}  V & 0 \\ 0 & 0 \end{pmatrix},    \lb{4.4}
\end{equation}
and finally consider the perturbed Dirac-type operator 
\begin{align}
\begin{split}
& Q + W = \begin{pmatrix} 0 & A^* \\ A & 0 \end{pmatrix}  
+ \begin{pmatrix} V & 0 \\ 0 & 0 \end{pmatrix},   \lb{4.5} \\
&\dom(Q + W) = \dom(Q) = \dom(A) \oplus \dom(A^*).    
\end{split}
\end{align}

One then computes the following expression for the resolvent of $Q + W$:

\begin{theorem} \lb{t4.2} 
Assume Hypothesis \ref{h4.1} and choose $\zeta\in\bbC$ such that 
\begin{equation}
\zeta^2 \in \rho(A^* A)\cap \rho(A A^*) \, \text{ and } \,  
\big[I + \zeta V (A^* A - \zeta^2 I_{\cH_1})^{-1}\big]^{-1} \in \cB(\cH_1).
\lb{4.6}
\end{equation} 
Then $\zeta \in \rho(Q+W)$ and 
\begin{align}
& (Q + W - \zeta I_{\cH_1 \oplus \cH_2})^{-1}   \no \\
& \quad = \left(\begin{smallmatrix} 
\zeta (A^* A - \zeta^2 I_{\cH_1})^{-1}  \phantom{A^* A - \zeta^2}
& - \zeta (A^* A - \zeta^2 I_{\cH_1})^{-1} [I + \zeta V (A^* A - \zeta^2 I_{\cH_1})^{-1}]^{-1} \\
\times [I + \zeta V (A^* A - \zeta^2 I_{\cH_1})^{-1}]^{-1}
& \times V A^* (A A^* - \zeta^2 I_{\cH_2})^{-1} + A^* (A A^* - \zeta^2 I_{\cH_2})^{-1}  \\[2mm]
A (A^* A - \zeta^2 I_{\cH_1})^{-1}  \phantom{A^* A-\zeta}
& - A (A^* A - \zeta^2 I_{\cH_1})^{-1} [I + \zeta V (A^* A - \zeta^2 I_{\cH_1})^{-1}]^{-1} \\
\times  [I + \zeta V (A^* A - \zeta^2 I_{\cH_1})^{-1}]^{-1} 
& \times V A^* (A A^* - \zeta^2 I_{\cH_2})^{-1} + \zeta (A A^* - \zeta^2 I_{\cH_2})^{-1} 
\end{smallmatrix}\right).    \lb{4.7}
\end{align}
\end{theorem}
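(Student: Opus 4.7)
The plan is to bootstrap from the explicit resolvent formula for the unperturbed supersymmetric Dirac-type operator $Q$ that is catalogued in Appendix \ref{sA}: for $\zeta^2 \in \rho(A^*A) \cap \rho(AA^*)$,
\begin{equation*}
(Q - \zeta I_{\cH_1 \oplus \cH_2})^{-1}
= \begin{pmatrix} \zeta (A^* A - \zeta^2 I_{\cH_1})^{-1} & A^* (A A^* - \zeta^2 I_{\cH_2})^{-1} \\
A (A^* A - \zeta^2 I_{\cH_1})^{-1} & \zeta (A A^* - \zeta^2 I_{\cH_2})^{-1} \end{pmatrix}.
\end{equation*}
Since $Q + W - \zeta I_{\cH_1 \oplus \cH_2} = [I_{\cH_1 \oplus \cH_2} + W(Q - \zeta I_{\cH_1 \oplus \cH_2})^{-1}](Q - \zeta I_{\cH_1 \oplus \cH_2})$ on $\dom(Q)$, the factorised form of the second resolvent identity will yield $(Q + W - \zeta I_{\cH_1 \oplus \cH_2})^{-1} = (Q - \zeta I_{\cH_1 \oplus \cH_2})^{-1} [I_{\cH_1 \oplus \cH_2} + W(Q - \zeta I_{\cH_1 \oplus \cH_2})^{-1}]^{-1}$, provided the bracketed factor is boundedly invertible on $\cH_1 \oplus \cH_2$.

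The decisive simplification is that $W$ has only one nonzero block, so a one-line matrix product gives
\begin{equation*}
I_{\cH_1 \oplus \cH_2} + W(Q - \zeta I_{\cH_1 \oplus \cH_2})^{-1} = \begin{pmatrix} I_{\cH_1} + \zeta V (A^*A - \zeta^2 I_{\cH_1})^{-1} & V A^* (AA^* - \zeta^2 I_{\cH_2})^{-1} \\ 0 & I_{\cH_2} \end{pmatrix}.
\end{equation*}
This is block upper triangular with $I_{\cH_2}$ in the $(2,2)$ slot, so invertibility in $\cB(\cH_1 \oplus \cH_2)$ reduces to invertibility of the single $(1,1)$-block $I_{\cH_1} + \zeta V(A^*A - \zeta^2 I_{\cH_1})^{-1}$, which is exactly the second requirement in \eqref{4.6}. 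A standard block-triangular inversion will then deliver $[I + W(Q - \zeta I)^{-1}]^{-1}$ in closed form, with $(1,1)$-block equal to $[I_{\cH_1} + \zeta V(A^*A - \zeta^2 I_{\cH_1})^{-1}]^{-1}$, $(1,2)$-block equal to $-[I_{\cH_1} + \zeta V(A^*A - \zeta^2 I_{\cH_1})^{-1}]^{-1} V A^*(AA^* - \zeta^2 I_{\cH_2})^{-1}$, $(2,1)$-block $0$, and $(2,2)$-block $I_{\cH_2}$.

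What remains is the $2 \times 2$ block product $(Q - \zeta I_{\cH_1 \oplus \cH_2})^{-1} [I + W(Q - \zeta I_{\cH_1 \oplus \cH_2})^{-1}]^{-1}$, which by direct multiplication reproduces the four blocks displayed in \eqref{4.7} and simultaneously certifies $\zeta \in \rho(Q + W)$. I do not anticipate a real obstacle here; the only care needed is to track domains so that $W(Q - \zeta I)^{-1}$ is genuinely in $\cB(\cH_1 \oplus \cH_2)$ (which it is, since $V \in \cB(\cH_1)$ and the supersymmetric resolvent is bounded under \eqref{4.6}). As an independent cross-check, one can simply verify \eqref{4.7} by computing $(Q + W - \zeta I_{\cH_1 \oplus \cH_2})$ times the proposed expression block by block, the crucial algebraic identity being
\begin{equation*}
A^*A(A^*A - \zeta^2 I_{\cH_1})^{-1} + (V - \zeta I_{\cH_1})\zeta (A^*A - \zeta^2 I_{\cH_1})^{-1} = I_{\cH_1} + \zeta V(A^*A - \zeta^2 I_{\cH_1})^{-1},
\end{equation*}
which collapses the $(1,1)$-block of the product to $I_{\cH_1}$ and is mirrored in the cancellations yielding the remaining three blocks.
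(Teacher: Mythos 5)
Your proposal is correct and follows essentially the same route as the paper: invoke the supersymmetric resolvent formula \eqref{A.25}, write $(Q+W-\zeta I_{\cH_1\oplus\cH_2})^{-1}=(Q-\zeta I_{\cH_1\oplus\cH_2})^{-1}\big[I_{\cH_1\oplus\cH_2}+W(Q-\zeta I_{\cH_1\oplus\cH_2})^{-1}\big]^{-1}$, observe that the bracketed operator is block upper triangular with trivial $(2,2)$-block, invert it via \eqref{4.9}, and multiply out. The extra direct cross-check by multiplying $(Q+W-\zeta I_{\cH_1\oplus\cH_2})$ against \eqref{4.7} is a small addition not present in the paper but consistent with it.
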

\begin{proof}
A direct computation, using \eqref{A.25} reveals
\begin{align}
& (Q + W - \zeta I_{\cH_1 \oplus \cH_2})^{-1} 
= (Q - \zeta I_{\cH_1 \oplus \cH_2})^{-1} 
\big[I_{\cH_1 \oplus \cH_2} + W (Q - \zeta I_{\cH_1 \oplus \cH_2})^{-1}\big]^{-1} 
\no \\
& \quad = \begin{pmatrix} \zeta (A^* A - \zeta^2 I_{\cH_1})^{-1} 
& A^* (A A^* - \zeta^2 I_{\cH_2})^{-1}  \\
A (A^* A - \zeta^2 I_{\cH_1})^{-1} & \zeta (A A^* - \zeta^2 I_{\cH_2})^{-1} \end{pmatrix}   \no \\
& \qquad \times \left(\begin{pmatrix} I_{\cH_1} & 0 \\ 0 & I_{\cH_2} \end{pmatrix}
+ \begin{pmatrix} V & 0 \\ 0 & 0 \end{pmatrix}
\begin{pmatrix} \zeta (A^* A - \zeta^2 I_{\cH_1})^{-1} & A^* (A A^* - \zeta^2 I_{\cH_2})^{-1}  \\ 
A (A^* A - \zeta^2 I_{\cH_1})^{-1} 
& \zeta (A A^* - \zeta^2 I_{\cH_2})^{-1} \end{pmatrix} \right)^{-1} 
\no \\
& \quad = \begin{pmatrix} \zeta (A^* A - \zeta^2 I_{\cH_1})^{-1} 
& A^* (A A^* - \zeta^2 I_{\cH_2})^{-1}  \\
A (A^* A - \zeta^2 I_{\cH_1})^{-1} & \zeta (A A^* - \zeta^2 I_{\cH_2})^{-1} \end{pmatrix}   \no \\
& \qquad \times \left(\begin{pmatrix} I_{\cH_1} & 0 \\ 0 & I_{\cH_2} \end{pmatrix} 
+ \begin{pmatrix} \zeta V (A^* A - \zeta^2 I_{\cH_1})^{-1} & 
V A^* (A A^* - \zeta^2 I_{\cH_2})^{-1} \\ 0 & 0 \end{pmatrix} \right)^{-1}   \no \\
& \quad = \begin{pmatrix} \zeta (A^* A - \zeta^2 I_{\cH_1})^{-1} 
& A^* (A A^* - \zeta^2 I_{\cH_2})^{-1}  \\
A (A^* A - \zeta^2 I_{\cH_1})^{-1} & \zeta (A A^* - \zeta^2 I_{\cH_2})^{-1} \end{pmatrix}   \no \\
& \qquad \times \left(
\begin{smallmatrix} [I + \zeta V (A^* A - \zeta^2 I_{\cH_1})^{-1}]^{-1}  
& \quad - [I + \zeta V (A^* A - \zeta^2 I_{\cH_1})^{-1}]^{-1} V 
A^* (A A^* - \zeta^2 I_{\cH_2})^{-1}  \\[2mm] 0 & I \end{smallmatrix} \right),   \lb{4.8}
\end{align}
using 
\begin{equation}
\begin{pmatrix} F & G \\ 0 & I_{\cH_2} \end{pmatrix}^{-1} 
=\begin{pmatrix} F^{-1} & - F^{-1} G \\ 0 & I_{\cH_2}\end{pmatrix}    \lb{4.9}
\end{equation}
for $F, F^{-1} \in \cB(\cH_1)$, $G \in \cB(\cH_2,\cH_1)$. Relation \eqref{4.8} 
proves \eqref{4.7}. 
\end{proof}

Of course this result extends to more general perturbations $V$: For instance, comparing $|Q|= \left(\begin{smallmatrix} |A| & 0 \\ 0 & |A^*| 
\end{smallmatrix}\right)$ and $W$, it is clear that $V$ being relatively bounded 
with respect to $(A^* A)^{1/2}$ with relative bound strictly less than $1$, would be sufficient. In particular, the term 
\begin{equation}
V A^* (A A^* - \zeta^2 I_{\cH_2})^{-1} =\big [V (A^* A - \zeta^2 I_{\cH_1})^{-1/2}\big] 
\ol{\big[(A^* A - \zeta^2 I_{\cH_1})^{-1/2} A^*\big]} \in \cB(\cH_1)
\end{equation}  
in \eqref{4.7} is then well-defined. More generally, one could invoke form rather 
than operator perturbations $W$ of $|Q|$.

\section{An Infinite Sequence of Trace Formulas for \\ the Damped String Equation}
\label{s5}

The principal aim of this section is the derivation of an infinite sequence of trace 
formulas for the damped string equation.

Throughout the major part of this section the coefficients $\alpha$ and $\rho$ 
in \eqref{1.1} will be assumed to satisfy the following conditions:

\begin{hypothesis}  \lb{h5.1}
Suppose that 
\begin{equation}
\alpha \in L^\infty([0,1]; dx), \, \text{ $\alpha$ real-valued a.e.\ on $(0,1)$},     \lb{5.1}
\end{equation}
and 
\begin{equation}
0 < \rho \in L^\infty([0,1]; dx), \quad 1/\rho \in L^\infty([0,1]; dx).    \lb{5.2}
\end{equation}
\end{hypothesis}

Introducing the self-adjoint Dirac-type operator $D$ in $L^2([0,1]; \rho^2 dx)^2$ 
by
\begin{align}
D = \begin{pmatrix} 0 & T^* \\ T & 0 \end{pmatrix}, \quad 
\dom(D) = \dom(T) \oplus \dom(T^*),   \lb{5.2a}
\end{align}
the concrete non-self-adjoint perturbations $B$ of $D$ then will be of the special diagonal form 
\begin{equation}
B = \begin{pmatrix} - i \alpha/\rho^2 & 0 \\ 0 & 0 \end{pmatrix} 
\in \cB\big(L^2([0,1]; \rho^2 dx)^2\big).     \lb{5.3}
\end{equation}

Given Hypothesis \ref{h5.1}, we introduce the closed (densely defined) 
Dirac-type operator 
\begin{align}
\begin{split}
& D + B = \begin{pmatrix} 0 & T^* \\ T & 0 \end{pmatrix} + 
\begin{pmatrix} - i \alpha/\rho^2 & 0 \\ 0 & 0 \end{pmatrix},   \lb{} \\
&\dom(D + B) = \dom(D) = \dom(T) \oplus \dom(T^*)   \lb{5.4}
\end{split}
\end{align}
in $L^2([0,1]; \rho^2 dx)^2$, where $T$ represents one of $T_{\min}$, $T_0$, 
$T_1$, and $T_{\omega}$, as singled out in Lemma \ref{l3.4}. 

\begin{remark}  \lb{r5.2}
To make the connection with the abstract damped wave equation discussed in Section \ref{s2}, one identifies, $\cH$ and $L^2([0,1]; \rho^2 dx)^2$, $A$ and $T$, $Q$ and $D$, $B$ and $S$, and $R$ and $- i \alpha/\rho^2$, respectively. 
\end{remark}

Our main aim is to derive an infinite sequence of trace formulas associated with $D + B$, but first we note the following result, denoting by 
\begin{equation}
\Im (S)= (S-S^*)/(2i), \quad S\in\cB(\cH)    \lb{5.4a}
\end{equation}
the imaginary part of $S$: 

\begin{theorem} \lb{t5.3}
Assume Hypothesis \ref{h5.1} and choose $\zeta\in\bbC$ such that 
\begin{equation}
\zeta^2 \in \rho(T^* T)\cap \rho(T T^*) \, \text{ and } \,  
\big[I + \zeta B (T^* T - \zeta^2 I)^{-1}\big]^{-1} \in \cB(\cH_1).
\lb{5.5}
\end{equation} 
Then $\zeta \in \rho(D+B)$ and 
\begin{align}
& (D + B - \zeta I_2)^{-1}   \no \\
& = \left(\begin{smallmatrix} 
\zeta (T^* T - \zeta^2 I)^{-1} [I + \zeta B (T^* T - \zeta^2 I)^{-1}]^{-1} 
& - \zeta (T^* T - \zeta^2 I)^{-1} [I + \zeta B (T^* T - \zeta^2 I)^{-1}]^{-1} \\
& \times B T^* (T T^* - \zeta^2 I)^{-1} + T^* (T T^* - \zeta^2 I)^{-1}  \\[2mm]
T (T^* T - \zeta^2 I)^{-1} [I + \zeta B (T^* T - \zeta^2 I)^{-1}]^{-1}  
& - T (T^* T - \zeta^2 I)^{-1} [I + \zeta B (T^* T - \zeta^2 I)^{-1}]^{-1} \\
& \times B T^* (T T^* - \zeta^2 I)^{-1} + \zeta (T T^* - \zeta^2 I)^{-1} 
\end{smallmatrix}\right),    \lb{5.6}
\end{align}
with 
\begin{align}
& (D + B - \zeta I_2)^{-1} \in \cB_2\big(L^2([0,1]; \rho^2 dx)^2\big) 
\backslash  \cB_1\big(L^2([0,1]; \rho^2 dx)^2\big) ,   \lb{5.7} \\
& \Im \big[(D + B - \zeta I_2)^{-1}\big] \in \cB_1\big(L^2([0,1]; \rho^2 dx)^2\big).  
 \lb{5.8}
\end{align}
Here $T$ and $D$ stand for one of $T_{\max, \min, 0,1,\omega}$ and 
$D_{\max, \min, 0,1,\omega}$, respectively. 
\end{theorem}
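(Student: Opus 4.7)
The plan is to layer the abstract resolvent formula of Theorem \ref{t4.2} on top of the trace-ideal bounds of Theorem \ref{t3.3}. For \eqref{5.6}, I would set $\cH_1 = \cH_2 = L^2([0,1];\rho^2 dx)$, $A = T$, and $V = -i\alpha/\rho^2$, which is bounded by Hypothesis \ref{h5.1}, and invoke Theorem \ref{t4.2} verbatim: its assumption \eqref{4.6} reads exactly as \eqref{5.5}, and its conclusion \eqref{4.7} reads exactly as \eqref{5.6}; in particular $\zeta\in\rho(D+B)$ is inherited directly.

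For \eqref{5.7}, abbreviate $R_\zeta = (D+B-\zeta I_2)^{-1}$ and inspect the four blocks of \eqref{5.6} with the help of Theorem \ref{t3.3}. The $(1,1)$-block is $(T^*T-\zeta^2 I)^{-1}\in\cB_1$ times a bounded factor, hence in $\cB_1$. The $(2,2)$-block splits as $(TT^*-\zeta^2 I)^{-1}\in\cB_1$ plus a term of the form $T(T^*T-\zeta^2 I)^{-1}\cdot(\text{bounded})\cdot T^*(TT^*-\zeta^2 I)^{-1}$, i.e.\ a product of two Hilbert--Schmidt operators around a bounded middle factor, again in $\cB_1$. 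Each off-diagonal block reduces, modulo a $\cB_1$ correction, to $T^*(TT^*-\zeta^2 I)^{-1}$ or $T(T^*T-\zeta^2 I)^{-1}$ -- each only in $\cB_2$ -- so $R_\zeta \in \cB_2$. To rule out $R_\zeta \in \cB_1$, I would argue by contradiction: if $R_\zeta\in\cB_1$, the second resolvent identity $(D-\zeta I_2)^{-1} = R_\zeta + R_\zeta B(D-\zeta I_2)^{-1}$ would force $(D-\zeta I_2)^{-1}\in\cB_1$, via $\cB_2\cdot(\text{bounded})\cdot\cB_2 \subseteq \cB_1$. But Appendix \ref{sA} together with standard one-dimensional Weyl asymptotics for the second-order operator $T^*T$ give $\sigma(D)\setminus\{0\} = \{\pm\lambda_n(T^*T)^{1/2}\}$ with $\lambda_n(T^*T)\sim Cn^2$, so the singular values of $(D-\zeta I_2)^{-1}$ decay like $1/n$ -- square-summable but not summable, a contradiction.

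Finally for \eqref{5.8}, since $D$ is self-adjoint one has $R_\zeta^* = (D+B^*-\bar\zeta I_2)^{-1}$, and the standard identity
\[
R_\zeta - R_\zeta^* = R_\zeta\bigl[(R_\zeta^*)^{-1} - R_\zeta^{-1}\bigr] R_\zeta^*
= R_\zeta\bigl[(B^*-B) + (\zeta-\bar\zeta) I_2\bigr] R_\zeta^*
\]
yields, after dividing by $2i$,
\[
\Im\bigl[(D+B-\zeta I_2)^{-1}\bigr] = R_\zeta\bigl[(\alpha/\rho^2)\diag(1,0) + \Im(\zeta) I_2\bigr] R_\zeta^*,
\]
a product of two $\cB_2$ operators sandwiching a bounded factor, hence in $\cB_1$. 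The one genuinely delicate point is the $R_\zeta \notin \cB_1$ part of \eqref{5.7}: it relies on the concrete second-order Weyl asymptotics $\lambda_n(T^*T)\sim Cn^2$ rather than any purely abstract input, whereas everything else reduces to clean bookkeeping on Theorems \ref{t3.3} and \ref{t4.2}.
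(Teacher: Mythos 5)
Your proposal is correct, and its route coincides with the paper's for \eqref{5.6}, for the $\cB_2$-membership in \eqref{5.7}, and for \eqref{5.8} (the identity you write is precisely the paper's \eqref{5.9}). The one place you genuinely diverge is the $\notin\cB_1$ half of \eqref{5.7}. The paper proceeds directly on the off-diagonal block: it observes that $\lambda_n(T^*T)\sim cn^2$ forces $|T|(T^*T+I)^{-1}\notin\cB_1$, then uses the polar decomposition $T=V_T|T|$ and the two-sided ideal property of $\cB_1$ to conclude $T(T^*T-\zeta^2 I)^{-1}\notin\cB_1$, which sits (up to a bounded invertible factor and a $\cB_1$ correction) in a single matrix entry of \eqref{5.6}. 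You instead reduce to the free operator via the second resolvent identity: if $R_\zeta\in\cB_1$ then $(D-\zeta I_2)^{-1}\in\cB_1$ (the correction term $R_\zeta B(D-\zeta I_2)^{-1}$ lies in $\cB_2\cdot\cB\cdot\cB_2\subseteq\cB_1$ unconditionally), contradicting the $1/n$ decay of the singular values of $(D-\zeta I_2)^{-1}$ that follows from $\sigma(D)\setminus\{0\}=\{\pm\lambda_n(T^*T)^{1/2}\}$ and Weyl asymptotics. Both arguments hinge on the same $n^2$ asymptotics for $T^*T$; yours has the small advantage of bypassing the block-by-block bookkeeping (and the polar-decomposition trick) by appealing directly to the self-adjoint $D$, for which $(D-\zeta I_2)^{-1}$ is normal and its singular values are simply $|\lambda_n(D)-\zeta|^{-1}$. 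One point you leave implicit but should flag: the resolvent identity needs $\zeta\in\rho(D)$, which indeed follows from $\zeta^2\in\rho(T^*T)\cap\rho(TT^*)$ via \eqref{A.25}, so the hypothesis \eqref{5.5} supplies it.
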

\begin{proof}
Equation \eqref{5.6} and
$(D + B - \zeta I_2)^{-1} \in \cB_2\big(L^2([0,1]; \rho^2 dx)^2\big)$
are immediate consequences of Theorems \ref{t3.3} and \ref{t4.2}. To show that 
$(D + B - \zeta I_2)^{-1} \notin \cB_1\big(L^2([0,1]; \rho^2 dx)^2\big)$ we first focus on the term 
$T(T^* T -\zeta^2 I)^{-1}$ in the $(1,2)$-entry of \eqref{5.6} and argue as follows: Since the 
eigenvalues of $T^* T$ are known to be asymptotically of the form $c n^2 + \oh(n^2)$ for some 
$c\neq 0$ as $n\to \infty$, one concludes that 
\begin{equation}
|T|(T^* T + I)^{-1} = |T|\big(|T|^2 + I\big)^{-1} \notin \cB_1 \big(L^2([0,1]; \rho^2 dx)^2\big).   \lb{5.8a}
 \end{equation}
Next, we recall the polar decomposition for the densely defined and closed operator $T$, that is, 
$T = V_T |T|$, or, $|T| = (V_T)^* T$, where $|T|=(T^*T)^{1/2}\geq 0$ and $V_T$ is a partial isometry 
(cf.\ \cite[Sect.\ VI.2.7]{Ka80}). Hence, if one argues by contradiction and assumes that 
$T (T^* T + I)^{-1} \in \cB_1 \big(L^2([0,1]; \rho^2 dx)^2\big)$, multiplication of $|T|(T^* T + I)^{-1}$ 
from the left by the partial isometry $(V_T)^*$, and applying the trace ideal property of $\cB_1(\cdot)$, 
would yield the contradiction $|T|(T^* T + I)^{-1} \in \cB_1 \big(L^2([0,1]; \rho^2 dx)^2\big)$. 
Replacing $T$ by $T^*$ this also yields that 
$T^* (T T^* + I)^{-1} \notin \cB_1 \big(L^2([0,1]; \rho^2 dx)^2\big)$ in the $(2,1)$-entry of \eqref{5.6}. 
Since all other terms in \eqref{5.6} are in $\cB_1 \big(L^2([0,1]; \rho^2 dx)^2\big)$, this proves 
\eqref{5.7}. 

Finally, \eqref{5.8} follows from \eqref{5.7} and 
the resolvent identity 
\begin{equation}
 \Im \big[(D + B - \zeta I_2)^{-1}\big] = (D + B - \zeta I_2)^{-1} 
 [\Im(\zeta) - \Im(B)] \big[(D + B - \zeta I_2)^{-1}\big]^*.    \lb{5.9}
\end{equation}
\end{proof}

Next, we turn to the expansion of 
$\tr_{L^2([0,1]; \rho^2 dx)^2}\big(\Im \big[(D + B - \zeta I_2)^{-1}\big]\big)$ 
as $\zeta\to 0$:

\begin{theorem} \lb{t5.4}
Assume Hypothesis \ref{h5.1} and choose $\zeta\in\bbR\backslash\{0\}$ with 
$|\zeta|$ sufficiently small, such that $\zeta \in \rho(D+B)$. Then,   
\begin{align}
& {\tr}_{L^2([0,1]; \rho^2 dx)^2}\big(\Im \big[(D + B - \zeta I_2)^{-1}\big]\big)  \no \\
& \quad = \Im\big[{\tr}_{L^2([0,1]; \rho^2 dx)}\big(\big(2 \zeta + i \big(\alpha/\rho^2\big)\big) 
(T^* T - \zeta^2 I - \zeta i (\alpha/\rho^2))^{-1}\big)\big]     \no \\
& \quad \; = \sum_{m=0}^\infty t_{2m} \, \zeta^{2m},    \lb{5.10} 
\end{align}
where
\begin{align}
t_0 &= {\tr}_{L^2([0,1]; \rho^2 dx)}\big((\alpha/\rho^2) (T^* T)^{-1}\big),   \lb{5.10a} \\
t_2 &= - {\tr}_{L^2([0,1]; \rho^2 dx)}\big(\big(\alpha/\rho^2\big)(T^* T)^{-1} 
\big(\alpha/\rho^2\big) (T^* T)^{-1} \big(\alpha/\rho^2\big) (T^* T)^{-1}\big)   \no \\
& \quad + 3 {\tr}_{L^2([0,1]; \rho^2 dx)}\big(\big(\alpha/\rho^2\big)(T^* T)^{-2}\big),    \lb{5.11}  \\
& \hspace*{-1mm} \text{ etc.}    \no 
\end{align}
Here $T$, $D$, and $t_m$, $m\in\bbN_0$, stand for one of $T_{\min, 0,1,\omega}$,  
$D_{\min, 0,1,\omega}$, and $t_{\min, 0,1,\omega;m}$, $m\in\bbN_0$, respectively, and we only consider the case $\omega\in\bbC\backslash\{0,1\}$ $($cf.\ \eqref{3.22}$)$. 

Explicitly, one obtains for the coefficient $t_0$:
\begin{align}
t_{\min; 0} & = \int_0^1 dx \, x(1-x) \alpha(x),    \lb{5.11a} \\
t_{0;0} & = \int_0^1 dx \; x \alpha(x),    \lb{5.11b}  \\
t_{1;0} & = \int_0^1 dx \; (1-x) \alpha(x),    \lb{5.11c}  \\
t_{\omega;0} & = \f{1}{|1-\omega|^2} \int_0^1 dx \; [(1- |\omega|^2) x +1] \alpha(x),  
\quad \omega \in \bbC\backslash\{0,1\}.  \lb{5.11d} 
\end{align}
\end{theorem}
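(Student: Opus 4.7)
The plan is to start from the explicit resolvent formula in Theorem~\ref{t5.3}, which, setting $K(\zeta) := (T^*T - \zeta^2 I - i\zeta(\alpha/\rho^2))^{-1}$ and $V := -i\alpha/\rho^2$, identifies the $(1,1)$-block of $(D+B-\zeta I_2)^{-1}$ as $\zeta K(\zeta)$ and the $(2,2)$-block as $\zeta (TT^* - \zeta^2 I)^{-1} - T K(\zeta) V T^* (TT^* - \zeta^2 I)^{-1}$. By \eqref{5.8} one knows $\Im[(D+B-\zeta I_2)^{-1}] \in \cB_1(L^2([0,1]; \rho^2 dx)^2)$, so its trace is well-defined and may be split block-by-block; on the other hand, by \eqref{5.7} the resolvent itself is only Hilbert--Schmidt, so individual block traces need not be finite in isolation and one must always combine terms to land back in $\cB_1$. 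This bookkeeping is the principal technical obstacle.

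For real $\zeta$ close to $0$, I would then reduce ${\tr}_{L^2([0,1]; \rho^2 dx)^2}(\Im[(D+B-\zeta I_2)^{-1}])$ to the closed-form middle expression in \eqref{5.10}. The summand $\zeta (TT^* - \zeta^2 I)^{-1}$ in the $(2,2)$-block is self-adjoint and contributes nothing to the imaginary part. For the remaining pieces, cyclicity of trace combined with the supersymmetry identity $T^* (TT^* - zI)^{-1} = (T^*T - zI)^{-1} T^*$ and the algebraic identity $T^*T (T^*T - \zeta^2 I)^{-1} = I + \zeta^2 (T^*T - \zeta^2 I)^{-1}$ allow every factor of $T$ and $T^*$ to be absorbed back into $T^*T$. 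At that point the calculation hinges on the second-resolvent-identity consequence
\[
K(\zeta) = (T^*T - \zeta^2 I)^{-1} + i\zeta \, K(\zeta) (\alpha/\rho^2) (T^*T - \zeta^2 I)^{-1},
\]
which, because $\tr((T^*T - \zeta^2 I)^{-1}) \in \bbR$, yields $\Im[\tr(K(\zeta))] = \zeta \, \Re[\tr(K(\zeta) (\alpha/\rho^2) (T^*T - \zeta^2 I)^{-1})]$. Substituting this into the sum of the two surviving block-trace contributions and collecting produces exactly $\Im[\tr((2\zeta + i\alpha/\rho^2) K(\zeta))]$, which is the middle expression in \eqref{5.10}.

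Finally, the power series $\sum_m t_{2m} \zeta^{2m}$ is obtained by inserting the Neumann expansion $K(\zeta) = \sum_{n\ge 0} [(T^*T)^{-1} (\zeta^2 I + i\zeta\, \alpha/\rho^2)]^n (T^*T)^{-1}$, which converges for small $|\zeta|$ since $(T^*T)^{-1} \in \cB(L^2([0,1]; \rho^2 dx))$ by Lemma~\ref{l3.2} and Theorem~\ref{t3.3}. That only even powers of $\zeta$ survive follows from the symmetry $K(-\zeta) = K(\zeta)^*$ for real $\zeta$: setting $F(\zeta) := \tr((2\zeta + i\alpha/\rho^2) K(\zeta))$ one verifies $F(-\zeta) = -\overline{F(\zeta)}$, so $\Im F$ is even. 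Collecting terms to second order produces $t_0 = \tr((\alpha/\rho^2)(T^*T)^{-1})$ as in \eqref{5.10a} and the stated $t_2$ in \eqref{5.11}. The case-by-case identities \eqref{5.11a}--\eqref{5.11d} then follow by writing $t_0 = \int_0^1 \alpha(x) G_{T^*T}(0,x,x) \, dx$ (the weight $\rho^2$ in the Hilbert space measure cancels the $1/\rho^2$ in $\alpha/\rho^2$) and substituting the diagonal values of the Green's functions computed in Lemma~\ref{l3.4}.
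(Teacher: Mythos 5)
Your proposal is correct and takes essentially the same route as the paper: start from the explicit resolvent formula of Theorem \ref{t5.3}, observe that the trace of the imaginary part splits into the two diagonal blocks, use the supersymmetry intertwining \eqref{A.15} together with cyclicity of the trace and the second resolvent identity to collapse everything onto $K(\zeta) = (T^*T - \zeta^2 I - i\zeta\alpha/\rho^2)^{-1}$, then expand in a Neumann series and read off the Taylor coefficients via the Green's function of Lemma \ref{l3.4}. The one genuinely nicer touch in your write-up is the symmetry argument for evenness in $\zeta$ via $K(-\zeta) = K(\zeta)^*$ (hence $F(-\zeta) = -\overline{F(\zeta)}$ and $\Im F$ even), which replaces the paper's longer explicit manipulation in \eqref{5.12a}; one small quibble is that the passing remark about the individual diagonal block traces ``need not be finite in isolation'' is overly cautious, since each diagonal block in \eqref{5.6} is already a product of two Hilbert--Schmidt factors and hence separately trace class.
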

\begin{proof}
Using \eqref{5.6}, one computes
\begin{align}
& {\tr}_{L^2([0,1]; \rho^2 dx)^2}\big(\Im\big[(D+B-\zeta I_2)^{-1}\big]\big)    \no \\
& \quad = {\tr}_{L^2([0,1]; \rho^2 dx)}\Big(\Im\Big[\zeta(T^* T - \zeta^2 I)^{-1}
\big[I-\zeta i \big(\alpha/\rho^2\big)(T^* T - \zeta^2 I)^{-1}\big]^{-1}\Big]\Big)  \no \\
& \qquad + {\tr}_{L^2([0,1]; \rho^2 dx)}\Big(\Im\Big[T (T^* T - \zeta^2 I)^{-1} 
\big[I-\zeta i (\alpha/\rho^2)(T^* T - \zeta^2 I)^{-1}\big]^{-1}  \no \\
& \qquad \quad \times  i \big(\alpha/\rho^2\big) T^* (T T^* - \zeta^2 I)^{-1}\Big]\Big)    \no \\ 
& \quad = {\tr}_{L^2([0,1]; \rho^2 dx)}\Big(\Im\Big[\zeta 
\big(T^* T - \zeta^2 I - \zeta i \big(\alpha/\rho^2\big)\big)^{-1}\Big]\Big)   \no \\
& \qquad + {\tr}_{L^2([0,1]; \rho^2 dx)}\Big(\Im\Big[T^* T (T^* T - \zeta^2 I)^{-1}    \no \\
& \hspace*{3.3cm} \times \big(T^* T - \zeta^2 I - \zeta i \big(\alpha/\rho^2\big)\big)^{-1} 
i \big(\alpha/\rho^2\big)\Big]\Big)    \no \\
& \quad = {\tr}_{L^2([0,1]; \rho^2 dx)}\Big(\Im\Big[\zeta 
\big(T^* T - \zeta^2 I - \zeta i \big(\alpha/\rho^2\big)\big)^{-1}\Big]\Big)   \no \\
& \qquad 
- {\tr}_{L^2([0,1]; \rho^2 dx)}\Big(\Im\Big[\big(T^* T - \zeta^2 I - \zeta i \big(\alpha/\rho^2\big)\big)^{-1} 
(- i) \big(\alpha/\rho^2\big)\Big]\Big)    \no \\
& \qquad 
+ {\tr}_{L^2([0,1]; \rho^2 dx)}\Big(\Im\Big[\zeta^2 (T^* T - \zeta^2 I)^{-1}    \no \\
& \hspace*{3.3cm}  \times \big(T^* T - \zeta^2 I - \zeta i 
\big(\alpha/\rho^2\big)\big)^{-1} i \big(\alpha/\rho^2\big)\Big]\Big)    \no \\ 
& \quad = {\tr}_{L^2([0,1]; \rho^2 dx)}\Big(\Im\Big[\big(\zeta + i \big(\alpha/\rho^2\big)\big)  
\big(T^* T - \zeta^2 I - \zeta i \big(\alpha/\rho^2\big)\big)^{-1}\Big]\Big)   \no \\
& \qquad 
+ {\tr}_{L^2([0,1]; \rho^2 dx)}\Big(\Im\Big[\zeta \big(T^* T - \zeta^2 I - \zeta i \big(\alpha/\rho^2\big)\big)^{-1}
\no \\
& \hspace*{3.3cm} \times \zeta i \big(\alpha/\rho^2\big) 
(T^* T - \zeta^2 I)^{-1}\Big]\Big)   \no \\
& \quad = {\tr}_{L^2([0,1]; \rho^2 dx)}\Big(\Im\Big[\big(\zeta + i \big(\alpha/\rho^2\big)\big)  
\big(T^* T - \zeta^2 I - \zeta i \big(\alpha/\rho^2\big)\big)^{-1}\Big]\Big)   \no \\
& \qquad 
- {\tr}_{L^2([0,1]; \rho^2 dx)}\Big(\Im\Big[\zeta \big(T^* T - \zeta^2 I - \zeta i \big(\alpha/\rho^2\big)\big)^{-1} 
\no \\
& \qquad \quad \times 
\big(T^* T - \zeta^2 I - \zeta  i \big(\alpha/\rho^2\big) - T^* T + \zeta^2 I\big) 
(T^* T - \zeta^2 I)^{-1}\Big]\Big)   \no \\
& \quad = {\tr}_{L^2([0,1]; \rho^2 dx)}\Big(\Im\Big[\big(\zeta + i \big(\alpha/\rho^2\big)\big)   
\big(T^* T - \zeta^2 I - \zeta i \big(\alpha/\rho^2\big)\big)^{-1}\Big]\Big)   \no \\ 
& \qquad 
- {\tr}_{L^2([0,1]; \rho^2 dx)}\Big(\Im\Big[\zeta \big(T^* T - \zeta^2 I)^{-1} 
- \zeta \big(T^* T - \zeta^2 I - \zeta i \big(\alpha/\rho^2\big)\big)^{-1}\Big]\Big)  \no \\
& \quad = \Im\Big[{\tr}_{L^2([0,1]; \rho^2 dx)}\Big(\big(\zeta + i \big(\alpha/\rho^2\big)\big) 
\big(T^* T - \zeta^2 I - \zeta i \big(\alpha/\rho^2\big)\big)^{-1}\Big)\Big]   \no \\ 
& \qquad 
- \Im\Big[{\tr}_{L^2([0,1]; \rho^2 dx)}\Big(\zeta \big(T^* T - \zeta^2 I)^{-1} 
- \zeta \big(T^* T - \zeta^2 I - \zeta i \big(\alpha/\rho^2\big)\big)^{-1}\Big)\Big]  \no \\
& \quad = \Im\Big[{\tr}_{L^2([0,1]; \rho^2 dx)}\Big(\big(2 \zeta + i \big(\alpha/\rho^2\big)\big)  
\big(T^* T - \zeta^2 I - \zeta i \big(\alpha/\rho^2\big)\big)^{-1}\Big)\Big]   \no \\
& \quad = {\tr}_{L^2([0,1]; \rho^2 dx)}\Big(\Im\Big[\big(2 \zeta + i \big(\alpha/\rho^2\big)\big)  
\big(T^* T - \zeta^2 I - \zeta i \big(\alpha/\rho^2\big)\big)^{-1}\Big]\Big),    
\lb{5.12}
\end{align}
where we repeatedly used cyclicity of the trace (i.e., ${\tr}_{\cH_2}(AB) = {\tr}_{\cH_1}(BA)$ 
for $A\in \cB(\cH_1,\cH_2)$, $B \in \cB(\cH_2,\cH_1)$ with $AB\in \cB_1(\cH_2)$, 
$BA \in \cB_1(\cH_1)$, cf.\ \cite[Corollary\ 3.8]{Si05}),  
\eqref{A.15}, and the fact that ${\tr}_{\cH}(\Im(S)) = {\Im}_{\cH} (\tr(S))$ for $S\in\cB_1(\cH)$. 

The following elementary computation, again employing cyclicity of the trace,   
\begin{align}
& \Im\Big[{\tr}_{L^2([0,1]; \rho^2 dx)}\Big(\big(2 \zeta + i \big(\alpha/\rho^2\big)\big)  
\big(T^* T - \zeta^2 I - \zeta i \big(\alpha/\rho^2\big)\big)^{-1}\Big)\Big]   \no \\
& \quad = {\tr}_{L^2([0,1]; \rho^2 dx)}\Big(\Im\Big[\big(2 \zeta + i \big(\alpha/\rho^2\big)\big)  
\big(T^* T - \zeta^2 I - \zeta i \big(\alpha/\rho^2\big)\big)^{-1}\Big]\Big)   \no \\
& \quad = \f{1}{2i} {\tr}_{L^2([0,1]; \rho^2 dx)} \Big(\big(2\zeta + i\big(\alpha/\rho^2\big)\big) 
\big(T^*T - \zeta^2 I - \zeta i \big(\alpha/\rho^2\big)\big)^{-1}  \no \\
& \hspace*{3.3cm} - \big(T^*T - \zeta^2 I + \zeta i \big(\alpha/\rho^2\big)\big)^{-1} 
(2\zeta I - i \big(\alpha/\rho^2\big)\big)\Big)   \no \\
& \quad = \f{\zeta}{i}  {\tr}_{L^2([0,1]; \rho^2 dx)} 
\Big(\big(T^*T - \zeta^2 I - \zeta i \big(\alpha/\rho^2\big)\big)^{-1} 
- \big(T^*T - \zeta^2 I + \zeta i \big(\alpha/\rho^2\big)\big)^{-1}\Big)    \no \\
& \qquad + \f{1}{2} {\tr}_{L^2([0,1]; \rho^2 dx)}  \Big(\big(\alpha/\rho^2\big)
\big(T^*T - \zeta^2 I - \zeta i \big(\alpha/\rho^2\big)\big)^{-1}    \no \\
& \hspace*{3.5cm}
+ \big(T^*T - \zeta^2 I + \zeta i \big(\alpha/\rho^2\big)\big)^{-1}  \big(\alpha/\rho^2\big)\Big)    
\no \\
& \quad = \f{\zeta}{i}  {\tr}_{L^2([0,1]; \rho^2 dx)} 
\Big(\big(T^*T - \zeta^2 I - \zeta i \big(\alpha/\rho^2\big)\big)^{-1} 
- \big(T^*T - \zeta^2 I + \zeta i \big(\alpha/\rho^2\big)\big)^{-1}\Big)    \no \\ 
& \qquad + \f{1}{2} {\tr}_{L^2([0,1]; \rho^2 dx)}  \Big(\big(\alpha/\rho^2\big)  
 \big(T^*T - \zeta^2 I - \zeta i \big(\alpha/\rho^2\big)\big)^{-1}    \no \\
& \hspace*{3.5cm}
+ \big(\alpha/\rho^2\big) \big(T^*T - \zeta^2 I + \zeta i \big(\alpha/\rho^2\big)\big)^{-1} \Big),   
\lb{5.12a}
\end{align}
then shows that the expression \eqref{5.12} is indeed even with respect 
to $\zeta\in\bbR$, proving \eqref{5.10}. 

Finally, expanding the leading terms in \eqref{5.10} up to order $\Oh\big(\zeta^2\big)$ then 
yields \eqref{5.11}, and the expressions \eqref{5.11a}--\eqref{5.11d}  are a consequence of 
\eqref{3.40}--\eqref{3.43} and \eqref{5.10a}. 
\end{proof}

\begin{remark} \lb{r5.5}
Assume Hypothesis \ref{h5.1}. \\ 
$(i)$ With respect to the location of the eigenvalues $\lambda_j(D+B)$, $j\in J$, we note the standard fact (cf.\ \cite[Problem\ V.4.8]{Ka80}) that self-adjointness of $D$ 
and $B \in \cB\big(L^2([0,1]; \rho^2 dx)^2\big)$ yield for 
$d(z,\sigma(D))>\|B\|_{L^2([0,1]; \rho^2 dx)^2}$ (with $d(z, \Sigma)$ the distance between $z\in\bbC$ and the set $\Sigma \subset \bbC$) that 
\begin{equation}
\big\|B(D-z I_2)^{-1}\big\|_{L^2([0,1]; \rho^2 dx)^2} 
\leq \|B\|_{L^2([0,1]; \rho^2 dx)^2} 
d(z,\sigma(D))^{-1} < 1, 
\end{equation}
and hence by the usual geometric series argument,   
\begin{align}
& \big\|(D+B- z I_2)^{-1}\big\|_{\cB(L^2([0,1]; \rho^2 dx)^2)} 
 \no \\ 
& \quad 
\leq d(z,\sigma(D))^{-1} \sum_{k=0}^{\infty} [\|B\|_{L^2([0,1]; \rho^2 dx)^2} 
d(z,\sigma(D))^{-1}]^k    \no \\ 
& \quad = \big[d(z,\sigma(D)) - \|B\|_{L^2([0,1]; \rho^2 dx)^2}\big]^{-1}.
\end{align}
Thus, the (necessarily discrete) spectrum of $D+B$ is contained in the strip,   
\begin{equation}
\sigma(D+B) \subseteq \big\{z\in\bbC \,\big|\, \Im(z) \in 
\big[- \|B\|_{L^2([0,1]; \rho^2 dx)^2}, \|B\|_{L^2([0,1]; \rho^2 dx)^2}\big]\big\}.  
\lb{5.57}
\end{equation}
$(ii)$ Moreover, since $T^*T$ and $\alpha/\rho^2$ are invariant with respect to the 
operation of complex conjugation in $L^2([0,1]; \rho^2 dx)$ for 
$T=T_{\min,0,1}$, Lemma \ref{l2.5} 
applies to all nonzero eigenvalues of $D+B$ and hence 
\begin{equation}
\lambda_0 \in \sigma_{\rm p} (D+B) \, \text{ if and only if } \, 
- \ol{\lambda_0} \in \sigma_{\rm p} (D+B), \quad \lambda_0 
\in \bbC\backslash\{0\},      \lb{5.58}
\end{equation}
with geometric and algebraic multiplicities preserved. Here $D$ represents 
one of $D_{\min,0,1}$. In the case of $D_{\omega}$, 
$\omega \in \bbC\backslash \{0\}$, one analogously obtains 
\begin{equation}
\lambda_0 \in \sigma_{\rm p} (D_{\omega}+B) \, \text{ if and only if } \, 
- \ol{\lambda_0} \in \sigma_{\rm p} (D_{\ol \omega}+B), \quad \lambda_0 
\in \bbC\backslash\{0\}, \; \omega \in \bbC\backslash \{0\},    \lb{5.58a}
\end{equation}
since 
\begin{equation}
\begin{pmatrix} \gC & 0 \\ 0 & \gC \end{pmatrix} i G_{T_{\omega},B} 
\begin{pmatrix} \gC & 0 \\ 0 & \gC \end{pmatrix} = - i G_{T_{\ol \omega},B}. 
\end{equation}

More precisely (cf.\ Lemma \ref{l5.6} below), the point spectrum of 
$\sigma(D+B)$ and hence that of $i G_{T,\alpha/\rho^2}$, or equivalently, 
that of the associated quadratic pencil 
$L(z) = z^2 I_{\cH} + z i \alpha \rho^{-2} - T^*T$ in leading order   
(independently of the boundary conditions used at $x=0,1$), consists of 
two infinite sequences $\{\lambda_{\pm,j}(D+B)\}_{j \in\bbN} \subset \bbC$, 
such that 
\begin{equation}
\lambda_{\pm,j}(D+B) \underset{j \to \infty}{=} 
\pm  j \pi \bigg[\int_0^1 dx \, \rho(x)\bigg]^{-1} + \oh(|j|).     \lb{5.59}
\end{equation} 
Moreover, higher-order expansions of the type
\begin{align}
\begin{split}
\lambda_{\pm,j}(D+B) \underset{j \to \infty}{=} 
\pm j c_{-1} + \sum_{m=0}^M c_m (\pm j)^{-m} + \oh\big(|j|^{-M}\big), 
\quad M\in\bbN,& \\
c_{-1} = \bigg[\int_0^1 dx \, \rho(x)\bigg]^{-1},&     \lb{5.60}
\end{split}
\end{align}  
have been studied under a variety of additional smoothness assumptions on 
the coefficients $\rho$ and $\alpha$, for instance, in \cite{BF09}, \cite{Ce85}, 
\cite{CFNQ90}, \cite{CZ94}, \cite{Gu85}, \cite{Pi99}, \cite{Sh96a}, \cite{Sh97}, 
\cite{Sh98}, \cite{Sh99}, \cite{SMDB97}, \cite{Ya10}, and the references cited therein. Multiplicity questions of eigenvalues were discussed in \cite{Na00} 
and \cite{Pi99}, and various eigenvalue inequalities were derived in \cite{Na00}. 
\end{remark}

Since the leading-order asymptotics for the non-self-adjoint operator $D+B$ 
is of importance later in this section and we were unable to locate the result 
in the literature under our general assumptions on the coefficients $\alpha$ 
and $\rho$, we now present a proof of \eqref{5.59}:
 
\begin{lemma} \lb{l5.6}
Assume Hypothesis \ref{h5.1}. Then the leading-order asymptotics 
\eqref{5.59} holds. 
Consequently, the identical leading-order eigenvalue asymptotics applies to 
$i G_{T,\alpha/\rho^2}$ and the associated quadratic pencil 
$L(z) = z^2 I + z i \alpha \rho^{-2} - T^*T$, $z\in\bbC$.  
Here $D$ and $T$ represent any of 
$D_{\max, \min, 0,1,\omega}$ and $T_{\max, \min, 0,1,\omega}$, 
$\omega \in \bbC\backslash\{0\}$, respectively. 
\end{lemma}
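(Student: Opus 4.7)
The plan is to proceed in three stages. \textbf{Stage 1 (reduction to $D+B$).} By Theorems \ref{t2.3} and \ref{t2.4} together with Remark \ref{r2.5} (under the translation of Remark \ref{r5.2}), the nonzero eigenvalues of $D+B$, of $iG_{T,\alpha/\rho^2}$, and of the quadratic pencil $L(z)=z^2 I+zi\alpha\rho^{-2}-T^*T$ all coincide with preserved geometric and algebraic multiplicities. Hence it suffices to verify \eqref{5.59} for $D+B$ alone, after which the same leading-order asymptotics transfer to $iG_{T,\alpha/\rho^2}$ and to $L(\cdot)$.

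\textbf{Stage 2 (asymptotics of $\sigma(T^*T)$).} The eigenvalue problem for $T^*T$ amounts to the weighted Sturm--Liouville problem $-u''=\lambda\rho^2 u$ on $(0,1)$ with one of the self-adjoint boundary conditions listed in \eqref{3.29}--\eqref{3.33}. Applying the Liouville change of variable $y=\phi(x):=\int_0^x\rho(s)\,ds$, $L:=\phi(1)$, together with the unitary operator
\[
U\colon L^2([0,1];\rho^2 dx)\to L^2([0,L];dy),\qquad (Uf)(y):=\rho(\phi^{-1}(y))^{1/2}f(\phi^{-1}(y)),
\]
transforms $T^*T$ into $-d^2/dy^2$ on $L^2([0,L];dy)$ (with an appropriate self-adjoint boundary condition) modulo a symmetric first-order remainder of lower order. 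Under Hypothesis \ref{h3.1} alone, the classical Sturm--Liouville asymptotics (or the min--max comparison $\|1/\rho\|_\infty^{-2}\leq\rho^{-2}\leq\|\rho\|_\infty^{-2}$) then yield
\[
\mu_n(T^*T)\underset{n\to\infty}{=}(n\pi/L)^2+o(n^2)
\]
uniformly across the five boundary conditions (which affect only subleading terms). Combined with \eqref{3.38b} and the supersymmetric identification $\sigma(D)\setminus\{0\}=\{\pm\sqrt{\mu_n(T^*T)}\}_{n\in\bbN}$ coming from $D^2=T^*T\oplus TT^*$ (Appendix \ref{sA}), one obtains $\mu_{\pm,n}(D)=\pm n\pi/L+o(n)$.

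\textbf{Stage 3 (bounded perturbation by $B$).} Since $D$ is self-adjoint with compact resolvent (Theorem \ref{t3.3}) and $B\in\cB(L^2([0,1];\rho^2 dx)^2)$, the spectrum $\sigma(D+B)$ is purely discrete and sits in the horizontal strip \eqref{5.57}. Setting $N:=\lceil 2\|B\|L/\pi\rceil+1$, for each sufficiently large $n$ I enclose a cluster of $N$ consecutive eigenvalues of $D$ by a contour $C_n$ whose distance from $\sigma(D)\setminus C_n^{\mathrm{int}}$ exceeds $\|B\|$. On each such $C_n$ the Neumann series
\[
(D+B-zI_2)^{-1}=(D-zI_2)^{-1}\sum_{k=0}^\infty\bigl[-B(D-zI_2)^{-1}\bigr]^k
\]
converges uniformly by Remark \ref{r5.5}(i), so the Riesz projection of $D+B$ associated with $C_n$ has the same finite rank $N$ as that of $D$. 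Each cluster then contains exactly $N$ eigenvalues of $D+B$ (counting algebraic multiplicity), and a compatible enumeration yields $\lambda_{\pm,j}(D+B)=\mu_{\pm,j}(D)+O(1)=\pm j\pi/L+o(|j|)$, which is \eqref{5.59}.

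The principal technical point is the bookkeeping in Stage 3: because the spectral gap of $D$ is asymptotically the \emph{constant} $\pi/L$ rather than diverging, individual eigenvalues cannot in general be isolated from those of $D$ when $\|B\|>\pi/(2L)$, and one must work with Riesz projections over clusters of bounded uniform size à la Markus--Matsaev rather than over single points. Since the cluster size $N$ is however independent of $n$, the resulting $O(1)$ discrepancy in the eigenvalue positions is absorbed into the $o(|j|)$ error, and translating back through Stage 1 delivers the identical leading-order asymptotics for $iG_{T,\alpha/\rho^2}$ and for the pencil $L(\cdot)$.
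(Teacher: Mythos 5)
Your Stage 1 reduction and the stated conclusion in Stage 2 agree with what the paper needs, but Stage 3 has a genuine gap: the cluster contours you require need not exist.

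You claim to enclose, for each large $n$, a cluster of $N$ consecutive eigenvalues of $D$ by a contour $C_n$ at distance greater than $\|B\|$ from the rest of $\sigma(D)$. This requires, on each side of the cluster, a gap in $\sigma(D)$ exceeding $\|B\|$. Under Hypothesis \ref{h5.1} the eigenvalues of $D$ satisfy only $\lambda_{\pm,j}(D)=\pm j\pi/L + o(j)$; the $o(j)$ error gives no control whatsoever on the individual gaps $\lambda_{j+1}(D)-\lambda_j(D)$, which merely have mean $\pi/L$. If $\|B\|$ is comparable to or larger than $\pi/(2L)$, the requisite gaps may simply never occur, and enlarging $N$ does not create them, since the spacing of $\sigma(D)$ is asymptotically constant rather than diverging: the cluster widens, but the inter-cluster gap remains on average $\pi/L$. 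Thus the Riesz-projection rank comparison cannot be carried out as stated.

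The paper sidesteps this completely by comparing $D^2=T^*T\oplus TT^*$ with $(D+B)^2$ rather than $D$ with $D+B$. It establishes the resolvent identity
\[
((D+B)^2 + \mu I_2)^{-1} = (D^2 + \mu I_2)^{-1/2}\,[I_2 + C]\,(D^2 + \mu I_2)^{-1/2},
\]
with $C$ Hilbert--Schmidt and $(I_2+C)^{-1}$ bounded, and then invokes Theorem V.11.3 of \cite{GK69}, a global eigenvalue comparison theorem for compact operators that yields $\lambda_j((D+B)^2)/\lambda_j(D^2)\to 1$ as $j\to\infty$ with no hypothesis at all on the gap structure of the spectrum. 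Together with \eqref{5.60a} and the supersymmetric spectral relations of Appendix \ref{sA} relating $\sigma(D)$ to $\sigma(T^*T)$ and $\sigma(TT^*)$, this gives \eqref{5.59}. To repair your Stage 3, replace the cluster-enclosure device by this compact-operator comparison, or by some other argument that relies only on the eigenvalue counting function of $D$ and not on its pointwise spacing.

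One lesser remark on Stage 2: the Liouville transformation with the factor $\rho^{1/2}$ presupposes differentiability of $\rho$, which Hypothesis \ref{h3.1} does not grant; and the min--max bracketing you offer as an alternative yields only the order $n^2$, not the sharp constant $(\pi/L)^2$. The asymptotics \eqref{5.60a} under merely $\rho,\rho^{-1}\in L^\infty$ is a classical Sturm--Liouville fact which the paper cites from \cite[Theorem 4.3.1]{Ze05}, so the stated conclusion of your Stage 2 is fine even if the route to it is not.
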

\begin{proof}
First we note that by the results of Appendix \ref{sA} (cf.\ \eqref{A.3}, 
\eqref{A.10}, \eqref{A.24}, \eqref{A.29}, and \eqref{A.30}) it suffices to focus 
on the nonnegative self-adjoint operator $D^2$ (rather than on the Dirac-type 
operator $D$) and compare it to $(D+B)^2$. Since $D^2 = T^*T \oplus TT^*$, 
and $T^*T$ and $TT^*$ are nonnegative self-adjoint Sturm--Liouville 
operators with appropriate boundary conditions at $x=0,1$ described in 
Section \ref{s3}, the leading-order  
asymptotics of these operators is well-known to be independent of the 
boundary conditions used in this manuscript and given by (see, e.g., 
\cite[Theorem 4.3.1]{Ze05})
\begin{equation}
\lambda_{j}(T^*T), \lambda_{j}(TT^*) \underset{j \to \infty}{=} 
j^2 \pi^2 \bigg[\int_0^1 dx \, \rho(x)\bigg]^{-2} + \oh\big(j^2\big).     \lb{5.60a}
\end{equation} 

Next, one chooses $\mu>0$ sufficiently large such that
\begin{equation}
\Big\|\ol{(D^2 + \mu I_2)^{-1/2} [BD + DB + B^2] 
(D^2 + \mu I_2)^{-1/2}}\Big\|_{\cB(L^2([0,1];\rho^2dx)^2)} < 1, 
\end{equation}
where we recall our simplifying notation $I_2 = I_{L^2([0,1];\rho^2dx)^2}$, and observe 
that 
\begin{align}
&\ol{(D^2 + \mu I_2)^{-1/2} [BD + DB + B^2] 
(D^2 + \mu I_2)^{-1/2}}    \no \\
& \quad =  (D^2 + \mu I_2)^{-1/2} [BD + B^2] 
(D^2 + \mu I_2)^{-1/2}   \no \\
& \qquad + D(D^2 + \mu I_2)^{-1/2} B (D^2 + \mu I_2)^{-1/2}. 
\end{align} 
Then the identity
\begin{align}
& ((D+B)^2 + \mu I_2)^{-1} = (D^2 + \mu I_2)^{-1/2}   \\
& \qquad \times \Big[I_2 + \ol{(D^2 + \mu I_2)^{-1/2} [BD + DB + B^2] 
(D^2 + \mu I_2)^{-1/2}}\Big]^{-1} (D^2 + \mu I_2)^{-1/2}     \no \\
& \quad = (D^2 + \mu I_2)^{-1/2} [I_2 + C] 
(D^2 + \mu I_2)^{-1/2},
\end{align}
where we introduced the Hilbert--Schmidt operator $C$ 
in $L^2([0,1];\rho^2dx)^2$,
\begin{align}
C &= \Big[I_2 + \ol{(D^2 + \mu I_2)^{-1/2} [BD + DB + B^2] 
(D^2 + \mu I_2)^{-1/2}}\Big]^{-1} - I_2    \no \\
&= - \Big[I_2 + \ol{(D^2 + \mu I_2)^{-1/2} [BD + DB + B^2] 
(D^2 + \mu I_2)^{-1/2}}\Big]^{-1}    \\
& \quad \times \ol{(D^2 + \mu I_2)^{-1/2} [BD + DB + B^2] 
(D^2 + \mu I_2)^{-1/2}} \in \cB_2 \big(L^2([0,1];\rho^2dx)^2\big), 
\no 
\end{align}
recalling 
$(D^2 + \mu I_2)^{-1/2} \in \cB_2 \big(L^2([0,1];\rho^2dx)^2\big)$ 
(cf.\ \eqref{5.7}). Since clearly,
\begin{align} 
\begin{split}
&[I_2 + C]^{-1} = \Big[I_2 + \ol{(D^2 + \mu I_2)^{-1/2} [BD + DB + B^2] 
(D^2 + \mu I_2)^{-1/2}}\Big]    \\
& \hspace*{7.15cm} \in \cB\big(L^2([0,1];\rho^2dx)^2\big),   
\end{split}
\end{align}
Theorem V.11.3 in \cite{GK69} applies and hence 
\begin{equation}
\lim_{j\to \infty} \lambda_j ((D+B)^2 + \mu I_2)/
\lambda_j (D^2 + \mu I_2) = 1,
\end{equation}
and thus also $\lim_{j\to \infty} \lambda_j ((D+B)^2)/\lambda_j (D^2) = 1$. 
Together with \eqref{5.60a} and the results in Appendix \ref{sA} relating 
the spectra of $D$ and $T^*T$, $TT^*$,  
mentioned at the beginning of this proof, yield the leading-order asymptotics 
\eqref{5.59}.
\end{proof}

Next, we recall a useful result due to Livsic \cite{Li57} on the trace of the imaginary part of certain non-self-adjoint operators. But first we mention some preparations: Let $F\in\cB_{\infty}(\cH)$, then $F$ has a 
{\it complete system of eigenvectors and generalized eigenvectors in 
$\cH$} if the smallest linear subspace (i.e., the linear span) of all eigenvectors and generalized eigenvectors of $F$ is dense in $\cH$.  

We continue with Schur's lemma:

\begin{lemma} [\cite{GGK90}, Lemma\ II.3.3, \cite{GK69}, 
Theorem\ V.2.1] \lb{l5.5} ${}$ \\
Let $F\in\cB_{\infty}(\cH)$ and denote by $\cE(F)$ the smallest closed 
linear subspace of $\cH$ $($i.e., the closed linear span$)$ containing all eigenvectors and generalized 
eigenvectors of $F$ corresponding to all nonzero eigenvalues of $F$ 
$($i.e., to $\sigma(F) \backslash \{0\}$$)$. Then there exists an orthonormal basis $\{\varphi_j\}_{j\in J}$ of $\cE(F)$ $($with 
$J \subseteq \bbN$ an appropriate index set$)$, such that with 
respect to the basis $\{\varphi_j\}_{j\in J}$, $F$ is an upper-triangular operator satisfying 
\begin{align}
\begin{split} 
& F \varphi_j = \sum_{k=1}^j F_{k,j} \varphi_k, \quad 
F_{k,j} = (\varphi_k, F \varphi_j)_{\cH}, \quad j, k \in J,    \\
& F_{j,j} = \lambda_j(F), \quad  j \in J,      \lb{5.12b}
\end{split} 
\end{align} 
with 
\begin{equation}
\sigma(F) \backslash \{0\} = \{\lambda_j(F)\}_{j \in J}, \quad 
|J| = \sum_{\lambda \in \sigma(F) \backslash \{0\}} m(\lambda,F). 
\lb{5.12c}
\end{equation}
\end{lemma}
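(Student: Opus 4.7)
The plan is to use Riesz--Schauder theory to decompose $\cE(F)$ as the closure of an algebraic sum of finite-dimensional root subspaces, triangularize $F$ on each of them in an ordered fashion to obtain a (generally non-orthogonal) root basis, and then apply the Gram--Schmidt process to produce the required orthonormal basis while preserving the upper-triangular form.

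First, since $F\in\cB_\infty(\cH)$, every nonzero $\mu\in\sigma(F)$ is an isolated eigenvalue of finite algebraic multiplicity satisfying $\cR(\mu,F)=\ran(P(\mu,F))$, a finite-dimensional $F$-invariant subspace on which $F-\mu I_{\cH}$ is nilpotent (cf.\ \eqref{2.15h}). I would enumerate the distinct nonzero eigenvalues as $\{\mu_n\}_{n\in N}$, $N\subseteq\bbN$, with $|\mu_n|\to 0$ if $N$ is infinite, and set $N_n:=\cR(\mu_n,F)$. Root subspaces attached to distinct nonzero eigenvalues are linearly independent, and the very definition of $\cE(F)$ together with \eqref{2.15h} yields
\[
\cE(F)=\overline{\textstyle\sum_{n\in N}N_n}.
\]
On each finite-dimensional $N_n$, I would apply Schur's (or Jordan's) theorem in finite dimensions to $F|_{N_n}$ to obtain a basis $\psi_{n,1},\dots,\psi_{n,m_a(\mu_n,F)}$ in which $F|_{N_n}$ is represented by an upper-triangular matrix with $\mu_n$ on the diagonal. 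Concatenating these bases in the order $n=1,2,\dots$ and re-indexing yields a linearly independent sequence $\{\psi_j\}_{j\in J}$, $J\subseteq\bbN$ with $|J|=\sum_{n\in N}m_a(\mu_n,F)$, whose linear span is dense in $\cE(F)$ and which satisfies $F\psi_j=\lambda_j(F)\,\psi_j+\sum_{k<j}c_{k,j}\psi_k$ for suitable $c_{k,j}\in\bbC$, with $\{\lambda_j(F)\}_{j\in J}$ enumerating $\sigma(F)\backslash\{0\}$ according to algebraic multiplicities.

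Next, I would apply the Gram--Schmidt process to $\{\psi_j\}_{j\in J}$ to obtain an orthonormal sequence $\{\varphi_j\}_{j\in J}$ with $\mathrm{span}\{\varphi_1,\dots,\varphi_j\}=\mathrm{span}\{\psi_1,\dots,\psi_j\}$ for each $j\in J$; passing to the closure then shows that $\{\varphi_j\}_{j\in J}$ is an orthonormal basis of $\cE(F)$. Writing $\psi_j=\sum_{k\le j}u_{k,j}\varphi_k$ with $u_{j,j}\ne 0$, the change-of-basis matrix $U=(u_{k,j})$ is upper triangular and invertible, so the matrix of $F$ in the $\{\varphi_j\}$-basis equals $UMU^{-1}$, where $M=(c_{k,j})$ is upper triangular with $M_{j,j}=\lambda_j(F)$. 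Since products and inverses of upper-triangular matrices are again upper triangular with the corresponding products/inverses on the diagonal, the relations in \eqref{5.12b} follow, and in particular $F_{j,j}=\lambda_j(F)$ is preserved.

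The main obstacle is purely organizational: keeping track of the global indexing across possibly infinitely many Jordan blocks of different sizes and verifying the density statement $\cE(F)=\overline{\sum_{n\in N}N_n}$, which is however immediate from the definition of $\cE(F)$ once \eqref{2.15h} is invoked. No analytic estimates beyond the compactness of $F$ and standard finite-dimensional linear algebra are needed.
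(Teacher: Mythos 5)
Your proposal is correct and matches the construction the paper sketches immediately after the lemma statement: build Jordan chains (equivalently, a Schur/Jordan basis) in each finite-dimensional root subspace $\cR(\mu_n,F)$, concatenate, and then apply Gram--Schmidt, noting that the triangular change of basis preserves both the upper-triangular form and the diagonal entries $\lambda_j(F)$. The paper cites [GGK90, Lemma II.3.3] and [GK69, Theorem V.2.1] for details rather than giving a full proof, but the outline it provides is the same route you take.
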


Here $|J|$ abbreviates the cardinality of $J$. The orthonormal basis 
$\{\varphi_j\}_{j\in J}$ of $\cE(F)$ is obtained from building Jordan 
blocks associated with chains 
\begin{align}
A \psi_{1,j} &= \lambda_j(F) \psi_{1,j},    \no \\ 
A \psi_{m,j} &= \lambda_j(F) \psi_{m,j}  \, 
\text{ or } \, A \psi_{m,j} = \lambda_j(F) \psi_{m,j} + \psi_{m-1,j},   
\lb{5.12d} \\
& \hspace*{2.45cm} m= 2,\dots,m_a(\lambda_j(F),F), \; j \in J,   \no 
\end{align}
followed by the Gram--Schmidt orthogonalization procedure of the 
system $\psi_{m,j}$, $m=1,\dots,m_a(\lambda_j(F),F)$, $j \in J$.

Next, we briefly consider bounded operators $G\in\cB(\cH)$ with nonnegative imaginary parts, $\Im(G) \geq 0$. Then (as shown, e.g.,  
in \cite[p.\ 136]{GGK90} in the case $\mu_0 = 0$), 
$f \in \ker(G - \mu_0 I_{\cH})$ for some $\mu \in\bbR$ implies 
$(f,(G - \mu_0 I_{\cH}) f)_{\cH}=0$, hence, 
$(f,(G^* - \mu_0 I_{\cH}) f)_{\cH}=0$, and thus, 
$0 \leq (f, \Im(g) f)_{\cH}=0$. 
Consequently, $\Im(G) f = 0$ (cf.\ \eqref{A.8}) and hence 
$f \in \ker(G^* - \mu_0 I_{\cH})$. The symmetry of this argument with respect to $G$ and $G^*$ yields
\begin{equation}
\ker(G - \mu_0 I_{\cH}) = \ker(G^* - \mu_0 I_{\cH}), \quad 
\mu_0 \in \bbR,     \lb{5.12e}
\end{equation}
and hence also
\begin{equation}
\cH = \ker(G - \mu_0 I_{\cH}) \oplus \ol{\ran(G - \mu_0 I_{\cH})}    \lb{5.12f}
\end{equation}
(rather than the standard 
$\cH = \ker(G^* - \mu_0 I_{\cH}) \oplus \ol{\ran(G - \mu_0 I_{\cH})}$). Relation \eqref{5.12f} is interesting as it implies the following fact: 
\begin{align}
\begin{split} 
& \text{If $\ker(G - \mu_0 I_{\cH})\supsetneqq \{0\}$, then $G$ has no generalized (resp., associated)}    \lb{5.12fa} \\ 
& \quad \text{eigenvector corresponding to the eigenvalue 
$\mu_0 \in \bbR$.}  
\end{split} 
\end{align}
Indeed, assuming that for some $f_0 \in \cH$, 
\begin{equation}
(G - \mu_0 I_{\cH})^2 f_0 = 0 \, \text{ but } \, 
(G - \mu_0 I_{\cH}) f_0 \neq 0 
\end{equation}
then yields 
\begin{equation}
(G - \mu_0 I_{\cH}) f_0 \in [\ker(G - \mu_0 I_{\cH}) 
\cap \ran(G - \mu_0 I_{\cH})]
\end{equation}
and hence contradicts \eqref{5.12f}.

The following trace formula \eqref{5.34}, the centerpiece of the next theorem, 
is a well-known result due to Livsic \cite{Li57}:

\begin{theorem} [\cite{GGK90}, Lemma\ VII.8.2, \cite{GK69}, 
Lemma\ II.4.1] \lb{t5.6} ${}$ \\
Assume that $G \in\cB_{\infty}(\cH)$ and $\Im(G) \geq 0$. In addition, 
denote by $\cE(G)$ the smallest closed linear subspace $($i.e., the 
closed linear span$)$ of all eigenvectors and generalized eigenvectors of $G$ corresponding to the nonzero eigenvalues of $G$. \\
$(i)$ Then $\cE(G) \subseteq \ol{\ran(G)}$. Moreover, 
\begin{equation}
 \text{$G$ has a complete system of eigenvectors and generalized 
 eigenvectors}    \lb{5.12g}
\end{equation}
if and only if 
\begin{equation}
\cE(G) = \ol{\ran(G)},    \lb{5.12h}
\end{equation}
which in turn is equivalent to
\begin{equation}
\cE(G)^{\bot} = \ker(G).    \lb{5.12i}
\end{equation}
$(ii)$ Suppose in addition that $\Im(G) \in \cB_1(\cH)$. Then
\begin{equation}
\sum_{j \in J} \Im(\lambda_j(G)) \leq \tr_{\cH} (\Im(G)).    \lb{5.12j}
\end{equation}
Here $\{\lambda_j(G)\}_{j\in J}$, $J\subseteq \bbN$ an appropriate 
index set, denotes the eigenvalues of $G$ ordered with respect 
to decreasing magnitude, 
\begin{equation}
|\lambda_{j+1} (G)| \leq |\lambda_j(G)|, \quad j, j+1 \in J,   \lb{5.12k}
\end{equation} 
counting algebraic multiplicities 
$($with $|J| = \sum_{\lambda \in \sigma(G) \backslash \{0\}} 
m(\lambda,G)$$)$. In addition,  
\begin{align}
& \quad \sum_{j \in J} \Im(\lambda_j(G)) = \tr_{\cH} (\Im(G))  \lb{5.34} \\
& \text{if and only if}    \no \\
& \quad \, \text{$G$ has a complete system of root vectors $($i.e., 
eigenvectors and}    \no \\
& \qquad \, \text{generalized eigenvectors\,$)$.}       \lb{5.35}
\end{align}
\end{theorem}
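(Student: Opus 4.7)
The plan is to prove the three equivalences in part (i) first, then use Schur's lemma (Lemma~\ref{l5.5}) to establish the trace inequality in part (ii), and finally characterize the equality case by a positivity plus invariant subspace argument, which I expect to be the main obstacle.

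For part (i), I would first verify $\cE(G) \subseteq \ol{\ran(G)}$: any eigenvector $f$ for a nonzero eigenvalue $\lambda_0$ satisfies $f = \lambda_0^{-1} G f \in \ran(G)$, and an elementary induction on chain length based on the recursion \eqref{5.12d} extends this to every generalized eigenvector for a nonzero eigenvalue, hence to $\cE(G)$ by closure. Next, combining \eqref{5.12e} with the standard orthogonal decomposition yields $\cH = \ker(G) \oplus \ol{\ran(G)}$, so $\cE(G)^\perp = \ker(G)$ if and only if $\cE(G) = \ol{\ran(G)}$. Finally, by \eqref{5.12fa} the root vectors for the eigenvalue $0$ exhaust $\ker(G)$, and a short computation shows $\ker(G) \perp \cE(G)$: if $Gf = 0$ then $G^* f = 0$ by \eqref{5.12e}, and for any generalized eigenvector $g$ with $(G - \lambda I_\cH)^k g = 0$, $\lambda \neq 0$, one peels off the chain to conclude $(f,g)_\cH = 0$. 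Therefore the closed linear span of the full root vector system equals $\ker(G) \oplus \cE(G)$, and its density in $\cH$ is equivalent to the first two conditions.

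For the inequality \eqref{5.12j}, I invoke Schur's lemma to produce an orthonormal basis $\{\varphi_j\}_{j \in J}$ of $\cE(G)$ with respect to which $G\big|_{\cE(G)}$ is upper triangular and has diagonal entries $\lambda_j(G)$. Extending by any orthonormal basis $\{\psi_k\}_{k \in K}$ of $\cE(G)^\perp$ and using that $\Im(G) \in \cB_1(\cH)$, I compute the trace in this basis and obtain
\begin{equation*}
\tr_\cH(\Im(G)) = \sum_{j \in J} \Im((\varphi_j, G \varphi_j)_\cH) + \sum_{k \in K} (\psi_k, \Im(G) \psi_k)_\cH = \sum_{j \in J} \Im(\lambda_j(G)) + \sum_{k \in K} (\psi_k, \Im(G) \psi_k)_\cH,
\end{equation*}
and since $\Im(G) \geq 0$ the second sum is nonnegative, yielding the Livsic inequality.

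The equality characterization is where the work lies. Assuming equality, each term $(\psi_k, \Im(G) \psi_k)_\cH$ must vanish, and since $\Im(G) \geq 0$ a Cauchy--Schwarz argument applied to $\Im(G)^{1/2}$ gives $\Im(G) \psi_k = 0$ for every $k$, hence $\Im(G)$ annihilates $\cE(G)^\perp$. The crucial observation is that $\cE(G)$ is $G$-invariant (root-vector chains are preserved by $G$), so $\cE(G)^\perp$ is $G^*$-invariant; then for $f \in \cE(G)^\perp$, $Gf = G^* f + 2 i \, \Im(G) f = G^* f \in \cE(G)^\perp$, so $\cE(G)^\perp$ is $G$-invariant and $G\big|_{\cE(G)^\perp}$ is compact and self-adjoint. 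Since any nonzero eigenvector of $G$ lies in $\cE(G)$ by definition, $G\big|_{\cE(G)^\perp}$ has spectrum $\{0\}$, and the spectral theorem forces $G\big|_{\cE(G)^\perp} = 0$, i.e., $\cE(G)^\perp \subseteq \ker(G)$. Together with the reverse inclusion $\ker(G) \subseteq \cE(G)^\perp$ established in paragraph one, this gives $\cE(G)^\perp = \ker(G)$, which by part (i) is completeness of the root vector system. Conversely, if the root vectors are complete, then part (i) gives $\cE(G)^\perp = \ker(G)$, and since $\Im(G)$ annihilates $\ker(G)$ (by \eqref{5.12e}, $Gf = 0$ implies $G^* f = 0$), the extra terms in the trace formula vanish and equality \eqref{5.34} holds.
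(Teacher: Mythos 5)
Your proof is correct, and for the trace inequality \eqref{5.12j} it coincides with the paper's (Schur basis for $\cE(G)$, complete to an orthonormal basis of $\cH$, compute the trace, use $\Im(G)\geq 0$). Where you go beyond the paper is in actually proving everything: the paper cites part $(i)$ to Gohberg--Goldberg--Kaashoek and Gohberg--Krein and, for the equivalence in part $(ii)$, merely asserts that ``$\Im(G)|_{\cE(G)^{\perp}}=0$ can be shown to be equivalent to $\cE(G)^{\perp}=\ker(G)$,'' whereas you supply a full argument. Your argument for the hard direction is a clean invariant-subspace argument: $\cE(G)$ is $G$-invariant, so $\cE(G)^{\perp}$ is $G^{*}$-invariant; the vanishing of $\Im(G)$ on $\cE(G)^{\perp}$ upgrades this to $G$-invariance of $\cE(G)^{\perp}$ and makes $G|_{\cE(G)^{\perp}}$ compact self-adjoint; since $G|_{\cE(G)^{\perp}}$ has no nonzero eigenvalues (any such eigenvector would lie in $\cE(G)$), it vanishes, giving $\cE(G)^{\perp}\subseteq\ker(G)$, which combined with $\ker(G)\subseteq\cE(G)^{\perp}$ (from $\ker(G)=\ker(G^{*})$ and the peel-off orthogonality computation) yields $\cE(G)^{\perp}=\ker(G)$. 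The chain-length induction proving $\cE(G)\subseteq\ol{\ran(G)}$ and the use of \eqref{5.12e}, \eqref{5.12f}, \eqref{5.12fa} to identify $\ker(G)$ as the full root subspace for $\lambda_0=0$ are also correct. What your more detailed route buys, relative to the paper's reliance on citation, is a self-contained verification of both parts using only the preliminaries already established in Section~\ref{s5}; the cost is length, but there is no gap.
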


For convenience of the reader we present the short argument for the 
inequality \eqref{5.12j} and indicate the equivalence of \eqref{5.34} and 
\eqref{5.35}: Choosing an orthonormal Schur basis 
$\{\phi_j\}_{j\in J}$ for $G$ in $\cE(G)$ (cf.\ \eqref{5.12b}, \eqref{5.12c}),
\begin{equation}
G \phi_j = \sum_{k=1}^j G_{k,j} \phi_k, \quad j \in J,    \lb{5.36}
\end{equation}
where
\begin{equation}
G_{j,j} = (\phi_j, G \phi_j)_{\cH} = \lambda_j(G), \quad 
G_{k,j} = (\phi_k, G \phi_j)_{\cH}, \quad j, k \in J,    \lb{5.37} 
\end{equation}
and an orthonormal basis $\{\chi_k\}_{k \in K}$ of $\cE(G)^{\bot}$, with 
$K \subseteq \bbN$ an appropriate index set, one obtains
\begin{align}
{\tr}_{\cH} (\Im(G)) &= \sum_{j\in J} (\phi_j, \Im(G) \phi_j)_{\cH} 
+ \sum_{k \in K} (\chi_k, \Im(G) \chi_k)_{\cH}    \no \\
& = \sum_{j\in J} \Im(\lambda_j(G)) 
+ \sum_{k \in K} (\chi_k, \Im(G) \chi_k)_{\cH},     \lb{5.38}
\end{align} 
using  
\begin{equation} 
(\phi_j, \Im(G) \phi_j)_{\cH} = \Im((\phi_j, G \phi_j)_{\cH}) 
= \Im(\lambda_j(G)), \quad j \in J.    \lb{5.39}
\end{equation} 
Since by hypothesis $\Im(G) \geq 0$, 
\eqref{5.38} proves \eqref{5.12j}. Moreover, \eqref{5.34} holds if and only 
if $\Im(G)|_{\cE(G)^{\bot}} = 0$. The latter can be shown to be equivalent 
to $\cE(G)^{\bot} = \ker(G)$, which in turn is equivalent to 
$\cE(G) = \ol{\ran(G)}$ by \eqref{5.12f}. 

Since Theorem \ref{t5.6}\,$(i)$ requires completeness of the system of eigenvectors and generalized eigenvectors of $G$, we next recall a sufficient criterion for completeness convenient for our subsequent purpose:

\begin{theorem} [\cite{GGK90}, Theorem\ XVII.5.1, \cite{Ma88}, 
Theorem\ I.4.3]  \lb{t5.7}  ${}$ \\
Let $S$ be a self-adjoint operator in $\cH$ with purely discrete spectrum, 
or equivalently, satisfying  $(S- z I_{\cH})^{-1}\in\cB_{\infty} (\cH)$ for some 
$($and hence for all\,$)$ $z \in \rho(S)$. Moreover, let $T$ be 
an $S$-compact operator in $\cH$, that is, 
$T(S- z I_{\cH})^{-1}\in\cB_{\infty}(\cH)$ for some 
$($and hence for all\,$)$ $z \in \rho(S)$. 
In addition, denoting by $\{\lambda_j(S)\}_{j\in J}$, $J\subseteq \bbN$, 
the eigenvalues of $S$ ordered with respect to increasing magnitude, 
$|\lambda_j (S)| \leq |\lambda_{j+1} (S)|$, $j\in J$, counting algebraic multiplicities, assume that for some $p \geq 1$, 
\begin{equation}
\sum_{j\in J'} |\lambda_j(S)|^{-p} < \infty,    \lb{5.40}
\end{equation}
where $J' \subseteq J$ represents the index corresponding to all 
nonzero eigenvalues of $S$. Then $\sigma(S+T)$ consists of only 
discrete eigenvalues in the sense that 
\begin{equation}
(S+T- z I_{\cH})^{-1} \in \cB_{\infty} (\cH), \quad z \in \rho(S+T),   \lb{5.41}
\end{equation} 
and each element $\lambda_k(S+T)$, $k\in K$, $K\subseteq \bbN$, 
of $\sigma(S+T)$ is an isolated point of $\sigma(S+T)$, each 
$\lambda_k(S+T)$ has finite algebraic multiplicity $($i.e., the range 
of the Riesz projection associated with $\lambda_k(S+T)$ 
is finite-dimensional\,$)$, and the system of eigenvectors and generalized eigenvectors of $S+T$ is complete in $\cH$. 
\end{theorem}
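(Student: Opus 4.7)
The plan is to prove the three conclusions (compactness of the resolvent, discrete nature of $\sigma(S+T)$, and completeness of root vectors) via Keldysh's classical completeness theorem. First, for the compactness of the resolvent and the Riesz--Schauder-type conclusions, I would fix $z \in \rho(S) \cap \rho(S+T)$ and factor, on $\dom(S+T) = \dom(S)$ (equality of domains since $S$-compactness implies $S$-boundedness with relative bound zero),
\begin{equation*}
S + T - z I_{\cH} = [I_{\cH} + T(S - z I_{\cH})^{-1}] (S - z I_{\cH}).
\end{equation*}
By hypothesis $K(z) := T(S - zI_{\cH})^{-1} \in \cB_\infty(\cH)$, and for $z \in \rho(S+T)$ the factor $I_{\cH} + K(z)$ is boundedly invertible, yielding
\begin{equation*}
(S+T-zI_{\cH})^{-1} = (S - z I_{\cH})^{-1}(I_{\cH}+K(z))^{-1} \in \cB_\infty(\cH),
\end{equation*}
which gives \eqref{5.41}. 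Riesz--Schauder theory applied to this compact resolvent then forces each $\lambda_k(S+T)$ to be isolated with finite algebraic multiplicity.

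For the completeness assertion I would specialize to a real $z_0 \in \rho(S) \cap \rho(S+T)$ of sufficiently large modulus and set $A := (S - z_0 I_{\cH})^{-1}$. Then $A$ is compact, self-adjoint, and injective (as $\ran(S - z_0 I_{\cH}) = \cH$), with eigenvalues $(\lambda_j(S)-z_0)^{-1}$. Assumption \eqref{5.40} implies $A \in \cB_p(\cH)$. Rewriting
\begin{equation*}
(I_{\cH}+K(z_0))^{-1} = I_{\cH} - K(z_0)(I_{\cH}+K(z_0))^{-1} =: I_{\cH} + \wti K,
\end{equation*}
with $\wti K \in \cB_\infty(\cH)$, yields the factorization
\begin{equation*}
(S+T-z_0 I_{\cH})^{-1} = A (I_{\cH} + \wti K).
\end{equation*}
This is precisely the setup of Keldysh's classical completeness theorem (cf., e.g., \cite[Theorem V.10.1]{GK69}): if $A$ is compact, normal (here self-adjoint), belongs to some $\cB_p$ with $p<\infty$, has trivial kernel, and $\wti K$ is compact, then the system of root vectors of $A(I_{\cH}+\wti K)$ is complete in $\cH$.

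To conclude I would transfer completeness from the resolvent back to $S+T$ via the standard spectral correspondence: nonzero eigenvalues $\mu$ of $(S+T-z_0 I_{\cH})^{-1}$ correspond bijectively to eigenvalues $\lambda = z_0 + \mu^{-1}$ of $S+T$, and moreover the root subspaces coincide, i.e.,
\begin{equation*}
\ker\big(((S+T-z_0 I_{\cH})^{-1} - \mu I_{\cH})^k\big) = \ker\big((S+T - \lambda I_{\cH})^k\big), \quad k \in \bbN,
\end{equation*}
(with $\ker((S+T-z_0I_{\cH})^{-1}) = \{0\}$ since $z_0 \in \rho(S+T)$). Hence completeness of root vectors of the resolvent is equivalent to completeness of root vectors of $S+T$. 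The main obstacle I anticipate is a careful invocation of Keldysh's theorem: one must be sure the Schatten index $p$ derived from \eqref{5.40} matches the hypotheses of the version cited, and that the unbounded spectral correspondence between $S+T$ and its resolvent is verified at the level of generalized eigenvectors, not merely eigenvectors.
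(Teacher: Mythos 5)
Your proposal is correct and takes essentially the same approach as the paper's (which is itself a citation of \cite[Theorem XVII.5.1]{GGK90} and \cite[Theorem I.4.3]{Ma88}, with the key steps sketched in the paragraph following the theorem and in Remark \ref{r5.8}): factor $(S+T-z_0I_{\cH})^{-1}$ as a compact self-adjoint Schatten-class operator times a boundedly invertible perturbation of the identity, invoke Keldysh's completeness theorem, and transfer completeness of root vectors back to $S+T$ via the resolvent spectral correspondence. The one point worth flagging is that the paper's equation \eqref{5.42a} writes the inverted factor $[I_{\cH}+T(S-zI_{\cH})^{-1}]^{-1}$ on the left of $(S-zI_{\cH})^{-1}$, whereas the algebraically correct factorization from $S+T-zI_{\cH}=[I_{\cH}+T(S-zI_{\cH})^{-1}](S-zI_{\cH})$ places it on the right, exactly as you do; your version is the accurate one, and either way the compactness conclusion holds.
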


In particular, the system of eigenvectors and generalized eigenvectors 
of $(S+T-z I_{\cH})^{-1}$ associated with all $($necessarily nonzero, 
cf.\ \eqref{5.41}$)$ eigenvalues of $(S+T-z I_{\cH})^{-1}$ is complete 
in $\cH$. We also note that the relative compactness assumption on 
$T$ with respect to $S$ yields that 
\begin{equation}
\big\|T (S-z I_{\cH})^{-1}\big\|_{\cB(\cH)} < 1 \, 
\text{ for  $z \in \rho(S)$, $|z|$ sufficiently large,}    \lb{5.42}
\end{equation} 
and hence 
\begin{equation}
(S+T- z I_{\cH})^{-1} = \big[I_{\cH} 
+ T (S - z)^{-1}\big]^{-1} (S - z I_{\cH})^{-1} 
\in \cB_{\infty} (\cH),    \lb{5.42a}
\end{equation} 
for $z \in \rho(S)$, $|z|$ sufficiently large, is well-defined 
(cf.\ \cite[p.\ 200 and Theorem\ 9.7]{We80}), implying \eqref{5.41}. 

\begin{remark} \lb{r5.8}
The proof of Theorem \ref{t5.7} in \cite[Theorem\ XVII.5.1]{GGK90} 
and \cite[Theorem\ I.4.3]{Ma88} relies on a completeness result of 
Keldysh (reproduced in English in \cite[Theorem\ IX.4.1]{GGK90} and 
\cite[Theorem\ V.8.1]{GK69}) 
in the context of certain multiplicative perturbations of compact operators. In 
addition, we note that Theorem \ref{t5.7} extends to a normal operator $S$ 
whose spectrum lies on a finite number of rays starting at $z=0$, moreover, 
it suffices to take $p>0$ (cf.\ \cite[Theorem\ I.4.3]{Ma88}). We also remark that 
Theorem\ XVII.5.1 in \cite{GGK90} requires $S^{-1} \in \cB_{\infty} (\cH)$, 
$T S^{-1} \in \cB_{\infty} (\cH)$, and 
$\big(I_{\cH} + T S^{-1}\big)^{-1} \in \cB(\cH)$, but if $0 \in \sigma(S)$, 
the simple replacement  of $S$ by $S - z_0 I_{\cH}$, for appropriate 
$0 \neq z_0 \in \bbR$,  permits one to remove the restriction of bounded 
invertibility of $S$ (cf.\ \eqref{5.41}). 
\end{remark}

Combining Theorems \ref{t5.3}, \ref{t5.4}, \ref{t5.7} and Theorem \ref{t5.6}\,$(i)$ then yields the following infinite sequence of trace formulas for the damped string equation:

\begin{theorem} \lb{t5.8}
Assume Hypothesis \ref{h5.1} and denote by $\{\lambda_j(D+B)\}_{j\in J}$, $J\subseteq \bbZ$, the eigenvalues of $D+B$ ordered with respect to increasing magnitude, $|\lambda_j (D+B)| \leq |\lambda_{j+1}(D+B)|$, 
$j\in J$, counting algebraic multiplicities. In addition, assume that 
$0 \in \rho(D+B)$. Here $($and below\,$)$ $T$, $D$, and $t_{2n}$, $n\in\bbN_0$, 
stand for one of $T_{\min, 0,1,\omega}$, $D_{\min, 0,1,\omega}$, and 
$t_{\min, 0,1,\omega; 2n}$, $n\in\bbN_0$, respectively, and we only consider the case $\omega\in\bbC\backslash\{0,1\}$ $($cf.\ \eqref{3.22}$)$. 
Then the following infinite sequence of trace formulas hold:
\begin{equation}
\sum_{j\in J} \f{\Im\big(\lambda_j(D+B)^{m+1}\big)}
{|\lambda_j(D+B)|^{2(m+1)}} 
= \begin{cases} -  t_{2 n}, & m=2n, \\ 0, & m=2n+1,  \end{cases}  
\quad n \in \bbN_0.   \lb{5.43}
\end{equation}
Explicitly, one obtains for $m=0,1$ in \eqref{5.43}, 
\begin{align}
& \sum_{j\in J} \f{\Im(\lambda_j(D+B))}{|\lambda_j(D+B)|^2} 
= - {\tr}_{L^2([0,1]; \rho^2 dx)}\big((\alpha/\rho^2) (T^* T)^{-1}\big)   \no \\[1mm]
& \quad = \begin{cases}   - \int_0^1 dx \, x(1-x) \alpha(x),   & T = T_{\min},   \\[1mm]
- \int_0^1 dx \; x \alpha(x),   & T = T_0,   \\[1mm]
- \int_0^1 dx \; (1-x) \alpha(x),   & T = T_1,   \\[1mm]
\f{-1}{|1-\omega|^2} \int_0^1 dx \; [(1- |\omega|^2) x +1] \alpha(x),  
\quad \omega \in \bbC\backslash\{0,1\},   & T = T_{\omega}, 
\end{cases}    \lb{5.44} \\[3mm]
& \sum_{j\in J} \f{\Im(\lambda_j(D+B)) \Re(\lambda_j(D+B))}{|\lambda_j(D+B)|^4} = 0. \lb{5.45} 
\end{align}
\end{theorem}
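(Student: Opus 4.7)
The plan is to apply the Livsic-type trace identity of Theorem \ref{t5.6} to the compact operator $G_\zeta := (D+B-\zeta I_2)^{-1}$ for real $\zeta\in\rho(D+B)$ in a deleted neighborhood of $0$, expand both sides in powers of $\zeta$, and match Taylor coefficients against the expansion in Theorem \ref{t5.4}.

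First I would verify completeness of the root vectors of $D+B$ via Theorem \ref{t5.7} with $S=D$ and $T=B$. The operator $D$ is self-adjoint with compact resolvent, since $D^2=T^*T\oplus TT^*$ is a direct sum of Sturm--Liouville operators whose eigenvalues grow like $j^2$ (cf.\ \eqref{5.60a}), so $\sum_j|\lambda_j(D)|^{-2}<\infty$, and the bounded operator $B$ is automatically $D$-compact; if $0\in\sigma(D)$ (which occurs for $T=T_{\min}$) one shifts by a real constant as in Remark \ref{r5.8}. Theorem \ref{t5.7} then yields compactness of $(D+B-\zeta I_2)^{-1}$ and completeness of the root vectors of $D+B$ in $L^2([0,1];\rho^2 dx)^2$. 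Since each root vector of $G_\zeta$ at its nonzero eigenvalue $\mu_j(\zeta)=1/(\lambda_j(D+B)-\zeta)$ coincides with a root vector of $D+B$, the root vectors of $G_\zeta$ are likewise complete.

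Next, the Schur-basis decomposition used in the proof of Theorem \ref{t5.6}, applied to $G_\zeta$, gives the algebraic identity $\tr(\Im(G_\zeta))=\sum_j\Im(\mu_j(\zeta))+\sum_k(\chi_k,\Im(G_\zeta)\chi_k)$ once $\Im(G_\zeta)\in\cB_1$ is invoked from Theorem \ref{t5.3}; the positivity hypothesis $\Im(G)\geq 0$ enters Theorem \ref{t5.6} only to produce the bound \eqref{5.12j} and the characterization of completeness via equality, not for the decomposition itself. Since completeness gives $\cE(G_\zeta)^\bot=\{0\}$, the remainder sum is vacuous, and combining with Theorem \ref{t5.4} produces
\[
\sum_j\Im\bigl(1/(\lambda_j(D+B)-\zeta)\bigr)=\sum_{m=0}^\infty t_{2m}\,\zeta^{2m}.
\]

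Finally I would expand $1/(\lambda_j(D+B)-\zeta)=\sum_{k\geq 0}\zeta^k/\lambda_j(D+B)^{k+1}$ and interchange the two summations. The crucial estimate is the elementary trigonometric bound $|\sin((k+1)\theta)|\leq(k+1)|\sin\theta|$, which yields $|\Im(\lambda^{k+1})|\leq(k+1)|\lambda|^k|\Im(\lambda)|$ and hence $|\Im(1/\lambda_j^{k+1})|\leq(k+1)|\Im(\lambda_j)|/|\lambda_j|^{k+2}$. Combined with $|\Im(\lambda_j(D+B))|\leq\|B\|$ from Remark \ref{r5.5}(i) and $\sum_j|\lambda_j(D+B)|^{-2}<\infty$ from Lemma \ref{l5.6}, this gives, for $|\zeta|\leq\tfrac{1}{2}\inf_j|\lambda_j(D+B)|$, the bound $\sum_{j,k}|\zeta|^k|\Im(1/\lambda_j^{k+1})|\leq 4\|B\|\sum_j|\lambda_j(D+B)|^{-2}<\infty$, so Fubini applies. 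Matching coefficients in $\zeta$ and using $\Im(1/\lambda^{m+1})=-\Im(\lambda^{m+1})/|\lambda|^{2(m+1)}$ produces \eqref{5.43}; formulas \eqref{5.44}--\eqref{5.45} then follow from \eqref{5.11a}--\eqref{5.11d}. The main obstacles are (i) legitimizing Livsic's formula when $\Im(G_\zeta)$ is not sign-definite, resolved by the observation that the Schur decomposition suffices once completeness is established, and (ii) justifying the Fubini exchange, which rests on the conjunction of the trigonometric bound, the perturbative boundedness of $\Im(\lambda_j(D+B))$, and the summability of $1/|\lambda_j(D+B)|^2$ from Lemma \ref{l5.6}.
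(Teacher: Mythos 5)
Your proposal is correct and arrives at \eqref{5.43}--\eqref{5.45}, but it genuinely diverges from the paper at one key juncture, and your route is in fact shorter. The paper, noting that $\Im\big[(D+B-\zeta I_2)^{-1}\big]$ need not be sign-definite for real $\zeta$, translates the spectral parameter to $z_1 + \zeta$ with $\Im(z_1) > \|B\|$, where the shifted resolvent does have nonnegative imaginary part, applies Livsic's equality \eqref{5.34} there, and then recovers the identity at $z_1 = 0$ by computing the trace-class difference of the two resolvents (via $\tr_\cH(\Im(S)) = \Im(\tr_\cH(S))$ for $S \in \cB_1(\cH)$ and a Lidskii-type evaluation of $\tr\big(G_\zeta G^{z_1}_\zeta\big)$). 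You instead observe that positivity of $\Im(G)$ is used in Theorem \ref{t5.6} only for the inequality \eqref{5.12j} and for the equivalence of \eqref{5.12g}--\eqref{5.12i}, whereas the Schur-basis trace decomposition \eqref{5.38} together with the algebraic identity $(\phi_j, \Im(G)\phi_j)_\cH = \Im(\lambda_j(G))$ hold for any $G \in \cB_\infty(\cH)$ with $\Im(G) \in \cB_1(\cH)$, since the trace of a self-adjoint trace class operator is absolutely convergent over every orthonormal basis; and that completeness of the root system of $D+B$ from Theorem \ref{t5.7}, combined with injectivity of $G_\zeta = (D+B-\zeta I_2)^{-1}$ and density of its range, forces $\cE(G_\zeta)^\bot = \{0\}$ directly, without invoking the equivalence \eqref{5.12h}--\eqref{5.12i} that does rest on positivity. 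This collapses the shift-and-subtract machinery to a single application of the Schur decomposition. The remaining ingredients---Theorem \ref{t5.4} for the $\zeta$-expansion, the bound $|\Im(z^{n+1})| \leq (n+1)|\Im(z)|\,|z|^n$ (your trigonometric derivation is equivalent to the paper's algebraic one), $|\Im(\lambda_j(D+B))| \leq \|B\|$ from \eqref{5.57}, and $\sum_j |\lambda_j(D+B)|^{-2} < \infty$ from Lemma \ref{l5.6} to justify Fubini---coincide with the paper's. What the paper's longer route buys is that Theorem \ref{t5.6}\,(ii) can be cited as a black box in exactly the form stated; what yours buys is the elimination of the auxiliary shift $z_1$, the resolvent-difference computation, and the term-by-term recombination that makes $z_1$ cancel.
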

\begin{proof}
By \eqref{5.7} and \eqref{5.8}, the hypotheses of Theorem \ref{t5.7} are satisfied, identifying $S$ and $P$ with $D$ and $B$, respectively. In particular, $\sigma(D+B)$ consists of only discrete eigenvalues,   $\{\lambda_j(D+B)\}_{j\in J}$, $J\subseteq \bbZ$, and the corresponding system of eigenvectors and generalized eigenvectors of $D+B$ is complete in $L^2([0,1]; \rho^2 dx)^2$. 

However, the crucial hypothesis $\Im(G) \geq 0$ in Theorem \ref{t5.6} is not necessarily implied by Hypothesis \ref{h5.1}. To remedy this fact one can proceed 
as follows: The operators $D+B$ and $D+B+zI_2$, $z \in \bbC$, have the same set of eigenvectors and generalized eigenvectors which are complete in 
$L^2([0,1]; \rho^2 dx)^2$. As in \eqref{5.9} one concludes that for 
$|\zeta| < \varepsilon_0$, $0<\varepsilon_0$ sufficiently small, using that $B$ is purely imaginary, $\Re(B)=0$, cf.\ \eqref{5.3}, 
\begin{align}
&  \Im \big[(D + B - (z+\zeta) I_2)^{-1}\big]   \no \\
& \quad = (D + B - (z+\zeta) I_2)^{-1} 
 [\Im(z+\zeta) - \Im(B)] \big[(D + B - (z + \zeta) I_2)^{-1}\big]^* \geq 0,    \no \\
& \hspace*{5cm} \Im(z) > \varepsilon_0 
+ \|B\|_{\cB(L^2([0,1]; \rho^2 dx)^2)}.    
\lb{5.46}
\end{align}
Indeed, recalling the Hilbert--Schmidt property \eqref{5.7} of the resolvent 
of $D + B$, one infers that \eqref{5.46} is well-defined since 
\begin{align}
& (D + B - (z+\zeta) I_2)^{-1} = (D  - (z+\zeta) I_2)^{-1}    \no \\
& \quad \times \big[I_2 + B 
(D  - (z+\zeta) I_2)^{-1}\big]^{-1} 
\in \cB\big(L^2([0,1]; \rho^2 dx)^2\big),    \no \\
& \hspace*{5.1cm} \Im(z) > \varepsilon_0 + \|B\|_{\cB(L^2([0,1]; \rho^2 dx)^2)}, 
\lb{5.47}
\end{align}
and since 
\begin{align}
\begin{split} 
& \big\| B (D  - (z+\zeta) I_2)^{-1}\big\|_{\cB(L^2([0,1]; \rho^2 dx)^2)} 
 \\
& \quad \leq \| B\|_{\cB(L^2([0,1]; \rho^2 dx)^2)} [\Im(z) - \varepsilon_0]^{-1} 
< 1,    \lb{5.48}
\end{split} 
\end{align}
as $\Im(z) > \varepsilon_0 + \| B\|_{\cB(\cH)}$. 

At this point one can apply Theorem \ref{t5.6}\,$(ii)$ (especially, \eqref{5.34}, 
\eqref{5.35}) to obtain 
\begin{align}
& {\tr}_{L^2([0,1]; \rho^2 dx)^2} \big(\Im\big((D + B - (z+\zeta) I_2)^{-1}\big)\big) 
\no \\
& \quad = \sum_{j\in J} \Im\big((\lambda_j(D + B) - z - \zeta)^{-1}\big) < \infty,    
 \lb{5.49} \\
& \hspace*{.95cm}  |\zeta| < \varepsilon_0, \; 
\Im(z) > \varepsilon_0 + \| B\|_{\cB(\cH)}.    \no 
\end{align} 
Since \eqref{5.49} exhibits no analyticity with respect to $z$ we cannot simply 
continue it to $z=0$. As a result we need to proceed along a different route: 
First we note that $0 \in \rho(D+B)$ and the asymptotic behavior \eqref{5.59} of the 
eigenvalues of $D+B$ yield  
\begin{equation}
\sum_{j \in J} |\lambda_j(D+B)|^{-2} < \infty.     \lb{5.49a}
\end{equation}
Recalling \eqref{5.57}, this also implies 
\begin{equation}
\sum_{j \in J} \big|\Im\big(\lambda_j(D+B)^{-1}\big)\big| = 
\sum_{j\in J} \f{|\Im(\lambda_j(D+B))|}
{|\lambda_j(D+B)|^2} < \infty.      \lb{5.49b}
\end{equation}
Thus, choosing a fixed $\Im(z_1) > \varepsilon_0 + \| B\|_{\cB(\cH)}$, and  
using the Hilbert--Schmidt property \eqref{5.7} of the resolvent 
of $D + B$ once more, one computes with the help of \eqref{5.49} that 
\begin{align}
& {\tr}_{L^2([0,1]; \rho^2 dx)^2} \big(\Im\big((D + B - \zeta I_2)^{-1}\big)\big) 
\no \\
& \quad 
= {\tr}_{L^2([0,1]; \rho^2 dx)^2} \big(\Im\big((D + B - (z_1+\zeta) I_2)^{-1}\big)\big) 
\no \\
& \qquad 
+ {\tr}_{L^2([0,1]; \rho^2 dx)^2} \big(\Im\big((D + B - \zeta I_2)^{-1}\big)\big) 
\no \\
& \qquad 
- {\tr}_{L^2([0,1]; \rho^2 dx)^2} \big(\Im\big((D + B - (z_1 +\zeta) I_2)^{-1}\big)\big) 
\no \\
& \quad 
= {\tr}_{L^2([0,1]; \rho^2 dx)^2} \big(\Im\big((D + B - (z_1 +\zeta) I_2)^{-1}\big)\big) 
\no \\
& \qquad 
+ {\tr}_{L^2([0,1]; \rho^2 dx)^2} \big(\Im\big((D + B - \zeta I_2)^{-1}\big)  
\no \\
& \hspace*{3.45cm} - \Im\big((D + B - (z_1 +\zeta) I_2)^{-1}\big)\big) 
\no \\
& \quad = {\tr}_{L^2([0,1]; \rho^2 dx)^2} 
\big(\Im\big((D + B - (z_1 +\zeta) I_2)^{-1}\big)\big) 
\no \\
& \qquad 
+ \Im\big({\tr}_{L^2([0,1]; \rho^2 dx)^2} \big((D + B - \zeta I_2)^{-1}  
 - (D + B - (z_1 +\zeta) I_2)^{-1}\big)\big) 
\no \\
& \quad = \sum_{j\in J} \Im\big((\lambda_j(D + B) - z_1 - \zeta)^{-1}\big)  \no \\
& \qquad - \Im\bigg(z_1 \sum_{j\in J}  (\lambda_j(D + B) - z_1 - \zeta)^{-1}
(\lambda_j(D + B) - \zeta)^{-1}\bigg)  \no \\
& \quad =  \sum_{j\in J} \Im\big((\lambda_j(D + B) - \zeta)^{-1}\big), \quad 
|\zeta| < \varepsilon_0. 
\end{align}

Thus, one obtains for $\zeta \in (-\varepsilon_0,\varepsilon_0)\backslash\{0\}$, 
$0<\varepsilon_0$ sufficiently small such that 
$(-\varepsilon_0,\varepsilon_0) \subset \rho(D+B)$,  
\begin{align}
& {\tr}_{L^2([0,1]; \rho^2 dx)^2} \big(\Im\big((D + B - \zeta I_2)^{-1}\big)\big) 
 = \sum_{j\in J} \Im\big((\lambda_j(D + B) - \zeta)^{-1}\big)    \no \\
& \quad = \sum_{j\in J} \Im\Big(\lambda_j(D + B)^{-1}
\big(1 - \zeta \lambda_j(D + B)^{-1}\big)^{-1}\Big)    \no \\
& \quad = \sum_{j\in J} \Im\bigg(\sum_{n\in\bbN_0} 
\lambda_j(D + B)^{-1} \big(\zeta \lambda_j(D + B)^{-1}\big)^n\bigg) 
\no \\    
& \quad = \sum_{j\in J} \sum_{n\in\bbN_0} \Im\big(
\lambda_j(D + B)^{-n-1}\big) \zeta^n     \no \\    
& \quad = \sum_{n\in\bbN_0} \bigg(\sum_{j\in J} \Im\big(
\lambda_j(D + B)^{-n-1}\big)\bigg) \zeta^n     \no \\ 
& \quad = - \sum_{n\in\bbN_0} \bigg(\sum_{j\in J} 
\f{\Im\big(\lambda_j(D + B)^{n+1}\big)}
{|\lambda_j (D + B)|^{2(n+1)}}\bigg) \zeta^n     \no \\ 
& \quad = \sum_{n\in\bbN_0} t_{2n} \zeta^{2n},     \lb{5.51}
\end{align}
proving \eqref{5.43} subject to the interchange of the sums over $j$ and $n$. 
To justify this interchange it suffices to prove the absolute convergence of 
$\sum_{j\in J} \sum_{n\in\bbN_0} \Im\big(\lambda_j(D + B)^{-n-1}\big) \zeta^n$. For this purpose one uses
\begin{equation}
\Im\big(z^{n+1}\big) = \Im (z) \sum_{\ell=0}^n z^{\ell} {\ol z}^{n-\ell}, 
\quad z \in \bbC, 
\end{equation}
and estimates 
\begin{equation}
\big|\Im\big(z^{n+1}\big)\big| \leq |\Im(z)| \sum_{\ell=0}^n |z|^{\ell} 
|\ol z|^{n-\ell} = (n+1) |\Im(z)| |z|^n, \quad z \in \bbC.
\end{equation}
By \eqref{5.49b} one estimates, without loss of generality, assuming that 
$|J|=\infty$  (otherwise, there is nothing to prove) and for simplicity, using 
$J=\bbZ\backslash\{0\}$,  
\begin{align}
&\sum_{\substack{j=-N \\ j \neq 0}}^N \sum_{n=0}^N \big|\Im\big(\lambda_j(D + B)^{-n-1}\big) \zeta^n\big|    \no \\
& \quad \leq \sum_{\substack{j=-N \\ j \neq 0}}^N \sum_{n=0}^N (n+1) 
\f{|\Im(\lambda_j(D+B)|}{|\lambda_j(D+B)|^2} |\lambda_j(D+B)|^{-n} 
\big|\zeta\big|^n     \no \\
& \quad \leq C(\varepsilon) + \Bigg( \sum_{j=-N}^{M(\varepsilon)} 
+ \sum_{j=M(\varepsilon)}^N\Bigg) \sum_{n=0}^N 
\f{|\Im(\lambda_j(D+B)|}{|\lambda_j(D+B)|^2} (n+1) 
\big|\zeta/\varepsilon\big|^n \leq C,     \\ 
& \hspace*{9.75cm}   |\zeta| < \varepsilon,   \no 
\end{align}
for some $C(\varepsilon)>0$, $0<M(\varepsilon)<N$, and some 
$C>0$ independent of $N\in\bbN$, proving the interchangeability of the sums 
over $j$ and $n$. Here we used that 
$|\lambda_j(D+B)|\to \infty$ as $|j| \to \infty$ and, since by hypothesis  
$0 \in \rho(D+B)$, $|\lambda_j(D+B)| \geq \varepsilon$, and 
$|\lambda_j(D+B)|^{-n} \leq \varepsilon^{-n}$, $j\in J \cap [M(\varepsilon),\infty)$, for sufficiently small $\varepsilon > 0$.  

Taking $m=0,1$ in \eqref{5.43} and applying \eqref{5.11a}--\eqref{5.11d} then yields \eqref{5.44} 
and \eqref{5.45}. 
\end{proof}

\begin{remark} \lb{r5.11}
$(i)$ Trace formulas, quite different from the ones we derived in 
Theorem \ref{t5.8}, involving certain regularized sums of all eigenvalues were 
derived in \cite{BF09}, \cite{Ce85}, \cite{CFNQ90}, and \cite{Ya10}. For instance,
in the case of Dirichlet boundary conditions at $x=0,1$, and for $\rho(x) \equiv 1$, 
the trace formula 
\begin{align}
\begin{split}
\sum_{j \in \bbN} [\lambda_{-,j}(D_{\rm min}+B) 
+ \lambda_{+,j}(D_{\rm min}+B) - 2 c_0] 
= \f{i}{4} [\alpha(0) + \alpha(1)] + c_0,&  \\
c_0 = - \f{i}{2} \int_0^1 dx \, \alpha (x),&     \lb{5.61}
\end{split}
\end{align} 
was derived in \cite{BF09}. \\
$(ii)$ We emphasize that dissipativity of $B$, more generally, sign-definiteness 
of $\alpha$, is not assumed in this section. 
\end{remark}

Next we briefly turn to the question whether the system of (algebraic) eigenvectors associated with $D+B$ as in Theorem \ref{t5.8} constitutes a Riesz basis. One recalls that a system $\{f_n\}_{n\in\bbN}$ in $\cH$ represents a {\it Riesz basis} in 
$\cH$ if it is equivalent to an orthonormal basis $\{e_n\}_{n\in\bbN}$ in $\cH$, that is, if there exists an operator $V\in\cB(\cH)$ with $V^{-1}\in\cB(\cH)$ and $Vf_n = e_n$, $n\in\bbN$. Equivalently, for any $h \in \cH$, there exists a sequence 
$\{c_n(h)\}_{n\in\bbN} \subset \bbC$, such that 
\begin{equation}
h = \sum_{n\in\bbN} c_n(h) f_n      \lb{5.62}
\end{equation}
converges unconditionally in $\cH$ (i.e., it remains convergent to the same 
element in $\cH$ under any permutation of the terms $c_n(h) f_n$, $n\in\bbN$, 
in \eqref{5.62}). 

More generally, a system $\{g_n\}_{n\in\bbN}$ in $\cH$ represents a 
{\it Riesz basis with parentheses} in $\cH$ if there exists a sequence of strictly increasing positive integers $\{m_{\ell}\}_{\ell\in\bbN} \subset \bbN$, 
$1 \leq m_{\ell} < m_{\ell + 1}$, $\ell\in\bbN$, such that for any 
$h \in \cH$, there exists a sequence $\{d_n(h)\}_{n\in\bbN} \subset \bbC$,
implying the unconditional convergence of the following series over 
$\ell\in\bbN_0$ in $\cH$,  
\begin{equation}
h = \sum_{\ell\in\bbN_0}\Bigg(\sum_{n=m_{\ell} + 1}^{m_{\ell + 1}} 
d_n(h) g_n \Bigg) 
= \sum_{\ell\in\bbN_0} P_{\ell} h,      \lb{5.63}
\end{equation}
where $m_0=0$ and the projections $P_{\ell}$, $\ell \in \bbN_0$, are given by 
\begin{equation}
P_{\ell} h = \sum_{n = m_{\ell} + 1}^{m_{\ell + 1}} d_n(h) g_n, \;\, h \in \cH,  
\quad P_{\ell} P_m = P_m P_{\ell} = \delta_{\ell,m} P_m, \;\, 
\ell, m \in \bbN_0     \lb{5.64}
\end{equation}
(with $m_{\ell}$, $\ell\in\bbN_0$, independent of $h\in\cH$). Equivalently, the 
sequence of subspaces $\{\ran(P_{\ell}\}_{\ell\in\bbN_0}$ with $P_{\ell}$ 
satisfying $P_{\ell} P_m = P_m P_{\ell} = \delta_{\ell,m} P_m$,  
$\ell, m \in \bbN_0$, is called an {\it unconditional $($or Riesz\,$)$ basis of subspaces} in $\cH$.  

The following fundamental abstract result on the existence of a Riesz 
basis with parentheses due to Katsnelson \cite{Ka67a}, \cite{Ka67b}, 
Markus \cite{Ma62}, and Markus and Matsaev \cite{MM81}, \cite{MM84} 
(proved under varying generality) will subsequently be applied to the 
operator $D+B$ in $L^2([0,1]; \rho^2 dx)^2$: 

\begin{theorem} \lb{t5.11}
Let $N$ be a normal operator in $\cH$ with compact resolvent and 
spectrum lying on a finite number of rays 
$\cR_q = \{z\in\bbC\,|\, \arg (z) = \theta_q \in [0, 2\pi)\}$, $q = 1,\dots, r$ 
for some $r \in\bbN$. In addition, suppose that for some $p \in [0,1)$, 
\begin{equation}
\liminf_{t\uparrow \infty} t^{p-1} \#(t,N) < \infty, 
\end{equation}   
where $\#(t,N)$, $t>0$, denotes the sum of the multiplicities of all 
eigenvalues of $N$ in the open disk $D(0;t) \subset \bbC$ of radius 
$t>0$ centered at the origin $z=0$. Assume that $R$ is a densely 
defined operator in $\cH$ satisfying for some $c>0$,
\begin{equation} 
\dom(N) \subseteq \dom(R), \quad  
\|R f\|_{\cB(\cH)} \leq c \|N f\|^p_{\cB(\cH)} \|f\|^{1-p}_{\cH}, 
\;  f \in \dom(N)
\end{equation}
$($i.e., $R$ is $p$-subordinate to $N$, cf.\ \cite[Sect.\ 5]{Ma88} and hence 
$R(N - z I_{\cH})^{-1} \in \cB_{\infty}(\cH)$, $z \in \rho(N)$$)$. 
Then $N+R$ defined on $\dom(N+R) = \dom(N)$ is a densely defined closed 
operator with compact resolvent and the system of root vectors of $N+R$ 
forms a Riesz basis with parentheses in $\cH$. 
\end{theorem}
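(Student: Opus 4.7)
The plan is to follow the Katsnelson--Markus--Matsaev approach for $p$-subordinate perturbations of normal operators, in three conceptual stages. First I would show that $N+R$, defined on $\dom(N+R)=\dom(N)$, is densely defined, closed, and has compact resolvent. The $p$-subordination estimate combined with Young's inequality $a^{p}b^{1-p} \leq \varepsilon a + C_\varepsilon b$ gives $\|Rf\|_\cH \leq \varepsilon\|Nf\|_\cH + C_\varepsilon\|f\|_\cH$ for every $\varepsilon > 0$ (using $p<1$), so $R$ is $N$-bounded with relative bound $0$, and $N+R$ is closed on $\dom(N)$ by Kato--Rellich. Writing
\[
R(N - z I_\cH)^{-1} = \big[R(|N| + I_\cH)^{-p}\big]\big[(|N| + I_\cH)^{p}(N - z I_\cH)^{-1}\big], \quad z \in \rho(N),
\]
the first factor is bounded by $p$-subordination and the spectral theorem, while the second is compact because $N$ is normal with compact resolvent and $p < 1$; hence $R(N - z I_\cH)^{-1} \in \cB_\infty(\cH)$, and a Neumann series argument for $|z|$ large gives $(N+R - z I_\cH)^{-1} \in \cB_\infty(\cH)$.

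The central analytic step is the construction of separating contours. Using the ray structure of $\sigma(N)$ and the density hypothesis, I would select radii $\{t_\ell\}_{\ell \in \bbN_0}$, $t_\ell \uparrow \infty$, via a pigeonhole argument applied along each of the finitely many rays $\cR_1, \ldots, \cR_r$, so that each circle $\{|z|=t_\ell\}$ remains a controlled distance from $\sigma(N)$; the estimate $\liminf_{t\uparrow\infty}t^{p-1}\#(t,N)<\infty$ is precisely what calibrates the admissible gap sizes. Joining consecutive circles by short transverse segments crossing the rays then produces contours $\Gamma_\ell$ enclosing the finite clusters $\Lambda_\ell = \sigma(N) \cap \{t_\ell \leq |z| \leq t_{\ell+1}\}$. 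The key estimate, following from $p$-subordination together with $\|(N - z I_\cH)^{-1}\|_{\cB(\cH)} = \dist(z,\sigma(N))^{-1}$ for normal $N$, reads
\[
\|R(N - z I_\cH)^{-1}\|_{\cB(\cH)} \leq c\,\|N(N - z I_\cH)^{-1}\|^p_{\cB(\cH)}\,\dist(z,\sigma(N))^{-(1-p)},
\]
and the contour construction is engineered so that $\sup_{z \in \Gamma_\ell}\|R(N - z I_\cH)^{-1}\|_{\cB(\cH)} \to 0$ as $\ell \to \infty$. Consequently $\Gamma_\ell \subset \rho(N + R)$ for large $\ell$, and the Riesz projections $P_\ell$ of $N+R$ and $P_\ell^{(0)}$ of $N$ onto the spectral subspaces enclosed by $\Gamma_\ell$ are well defined.

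Finally, since $N$ is normal, $\{\ran(P_\ell^{(0)})\}_{\ell \in \bbN_0}$ is an orthogonal decomposition of $\cH$ into finite-dimensional subspaces. The second resolvent identity yields
\[
P_\ell - P_\ell^{(0)} = \f{1}{2\pi i} \oint_{\Gamma_\ell} d\zeta \, (N - \zeta I_\cH)^{-1} R (N + R - \zeta I_\cH)^{-1},
\]
and the estimates of the second stage give $\|P_\ell - P_\ell^{(0)}\|_{\cB(\cH)} \to 0$ rapidly enough to invoke the Bari--Markus stability principle for unconditional bases of subspaces (cf.\ \cite[Sect.\ 6]{Ma88}, \cite{MM81}, \cite{MM84}): a sequence of projections close in norm to an orthogonal decomposition, whose ranges span a complete set, forms an unconditional basis of subspaces. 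Completeness of $\bigoplus_\ell \ran(P_\ell)$ in $\cH$ follows from Theorem \ref{t5.7} applied with $S = N$, $T = R$, and each $\ran(P_\ell)$ is finite-dimensional, invariant under $N+R$, and spanned by root vectors attached to the perturbed cluster. Grouping the root vectors of $N+R$ according to $\{\Lambda_\ell\}$ accordingly produces a Riesz basis with parentheses in $\cH$. The principal obstacle is the calibration in the second stage: the merely $\liminf$ density hypothesis provides only asymptotic counting control, and converting it---via a careful averaging/pigeonhole argument along each of the $r$ rays---into contours on which the $p$-subordination bound $\dist^{-(1-p)}$ overcomes the potentially large $\|N(N - z I_\cH)^{-1}\|^p$ factor is the delicate quantitative core of the theorem.
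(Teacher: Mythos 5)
The paper does not prove Theorem~\ref{t5.11} at all: it is stated as a known result of Katsnelson, Markus, and Markus--Matsaev, and the text immediately following the statement refers the reader to Markus~\cite[p.~27--37, Theorem~6.12]{Ma88} for a detailed treatment (with further pointers in Remark~\ref{r5.8} and the citation list). So there is no in-paper argument for you to be compared against. That said, your sketch is a faithful reconstruction of the standard Katsnelson--Markus--Matsaev route as it appears in Markus's book: relative $0$-bound of $R$ via Young's inequality, compactness of $R(N-zI_{\cH})^{-1}$ by factoring through $(|N|+I_{\cH})^{-p}$, a pigeonhole construction of admissible contours exploiting $\liminf_{t\uparrow\infty}t^{p-1}\#(t,N)<\infty$ along each of the finitely many rays, smallness of $\sup_{z\in\Gamma_\ell}\|R(N-zI_{\cH})^{-1}\|$ for large $\ell$, comparison of Riesz projections $P_\ell$ and $P_\ell^{(0)}$ via the second resolvent equation, completeness from the Keldysh-type Theorem~\ref{t5.7}, and a stability criterion for unconditional bases of subspaces. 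Two points deserve more care than your outline gives them, though neither is a conceptual gap: (i) passing from the $p$-subordination inequality $\|Rf\|\leq c\|Nf\|^p\|f\|^{1-p}$ to boundedness of $R(|N|+I_{\cH})^{-p}$ is itself a (short) lemma in Markus, not an immediate identity; and (ii) the stability step is not the Bari--Markus quadratic-closeness criterion in its vector form, but rather Markus's criterion for Riesz bases of subspaces (uniform boundedness of the spectral-cluster projections together with completeness), and the contour calibration is designed precisely to deliver that uniform bound; the name ``Bari--Markus'' in the literature is usually reserved for the $\sum_\ell\|P_\ell-P_\ell^{(0)}\|^2<\infty$ variant, which one does not in general get from $p$-subordination alone.
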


We note that since $N$ is assumed to be normal, the geometric and 
algebraic multiplicty of all eigenvalues of $N$ coincide in Theorem \ref{t5.11}. 

For a detailed discussion of Theorem \ref{t5.11} 
we refer to Markus \cite[p.\ 27--37, Theorem\ 6.12]{Ma88}. For additional 
results in connection with Theorem \ref{t5.11} we also refer to \cite{Ag76}, 
\cite{Ag77}, \cite{Ag82}, \cite{Ag94}, \cite[Ch.\ 5]{AKSV99}, \cite{Ba83}, 
\cite{Cl68}, \cite[Ch.\ XIX]{DS88a}, \cite{GC01}, \cite{GX04a}, 
\cite[Sect.\ V.4.5]{Ka80}, \cite{Kr57}, \cite{Sa77}, \cite{Sc54}, \cite{Sh79}, 
\cite{Sh90}, \cite{ST08}, \cite{Tu65}, \cite{Wy08}, \cite{Wy10}, \cite{Wy10a}, 
\cite{XG03}, \cite{XY05}, \cite{Zw09}, and the references cited therein. 

Theorem \ref{t2.3} (especially, Theorem \ref{t2.4}) permits one to shift 
spectral considerations of $G_{A,R}$ in $\cH_A \oplus \cH$, or that of 
the quadratic pencil $L(\cdot)$ in \eqref{2.33a}, to that of the simpler  
Dirac-type operator $Q+S$ in $\cH \oplus \cH$. In particular, in the 
concrete context of this section, $B$ is a bounded diagonal perturbation 
of the supersymmetric  self-adjoint Dirac-type operator $D$ in 
$L^2([0,1]; \rho^2 dx)^2$ and an application of Theorem \ref{t5.11} yields 
the following result: 

\begin{theorem} \lb{t5.12}
Assume Hypothesis \ref{h5.1} and introduce the supersymmetric 
Dirac-type operator $D$, the non-self-adjoint diagonal perturbation 
$B$, and the operator $D+B$ in $L^2([0,1]; \rho^2 dx)^2$ by \eqref{5.2a}, 
\eqref{5.3}, and \eqref{5.4}, respectively. Then $D+B$ is closed and densely defined on $\dom(D+B) = \dom(D)$ with compact resolvent. Moreover, the 
system of root vectors of $D+B$, or equivalently, that for $G_{T,\alpha/\rho^2}$ 
$($resp., the quadratic pencil $N(z) = z^2 I + z i B - T^*T$, 
$\dom(N(z) = \dom(T^* T)$, $z \in \bbC$$)$, forms a Riesz basis with 
parentheses in $L^2([0,1]; \rho^2 dx)^2$ $($resp.\ $L^2([0,1]; \rho^2 dx)$$)$. 
More precisely, each of the two sequences 
$\{\lambda_{\pm,j}(D+B)\}_{j\in\bbN}$ in \eqref{5.59} decomposes into finite 
clusters of eigenvalues $\Lambda_{\pm,\ell}$ tending to $\pm\infty$ as 
$\ell\to\pm\infty$ such that the associated Riesz projections 
$P(\Lambda_{\pm,\ell},D+B)$, $\ell \in \bbN_0$, satisfy
\begin{equation}
\sum_{\ell\in\bbN_0} P(\Lambda_{-,\ell},D+B) + 
\sum_{\ell\in\bbN_0} P(\Lambda_{+,\ell},D+B) = I. 
\end{equation}
Here $D$ and $T$ stand for one of $D_{\max, \min, 0,1,\omega}$ and 
$T_{\max, \min, 0,1,\omega}$, respectively.
\end{theorem}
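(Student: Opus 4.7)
\bigskip

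\noindent
\textbf{Proof proposal.}
The strategy is to invoke Theorem \ref{t5.11} directly, taking the supersymmetric Dirac-type operator $D$ as the normal operator $N$ and the bounded diagonal operator $B$ as the perturbation $R$. Four things must be checked: (a) $D$ is normal with compact resolvent, (b) $\sigma(D)$ lies on a finite number of rays through $0$, (c) the counting function $\#(t,D)$ satisfies $\liminf_{t\to\infty} t^{p-1}\#(t,D)<\infty$ for some $p\in[0,1)$, and (d) $B$ is $p$-subordinate to $D$ for the same $p$. The cleanest choice is $p=0$, which exploits the boundedness of $B$.

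For (a) and (b), $D$ is self-adjoint by the supersymmetric structure \eqref{5.2a} and the results of Appendix \ref{sA}; in particular $D^2 = T^*T \oplus TT^*$, and by \eqref{3.38A}, \eqref{3.38a} both diagonal blocks have purely discrete spectrum, so $D^2$ (and hence $D$) has compact resolvent. Self-adjointness pins $\sigma(D)$ on the real line, which is the union of the two rays $\cR_{1}=[0,\infty)$ and $\cR_{2}=(-\infty,0]$, i.e.\ $r=2$. For (c), I would use the Weyl asymptotics \eqref{5.60a} for $T^*T$ and $TT^*$, together with $\sigma(D)\setminus\{0\}=\{\pm\lambda^{1/2}\st \lambda\in\sigma(T^*T)\cup\sigma(TT^*),\,\lambda>0\}$ (cf.\ the supersymmetric relations used in the proof of Lemma \ref{l5.6}). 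This gives $|\lambda_{\pm,j}(D)|\sim j\pi\bigl[\int_0^1\rho(x)\,dx\bigr]^{-1}$ as $j\to\infty$, whence $\#(t,D) = O(t)$, which is precisely $\liminf_{t\to\infty} t^{-1}\#(t,D)<\infty$ with $p=0$. For (d), since $B\in\cB\bigl(L^2([0,1];\rho^2dx)^2\bigr)$, one has trivially $\dom(D)\subseteq\cH=\dom(B)$ and
\begin{equation*}
\|Bf\|\le\|B\|_{\cB(\cH)}\|f\| = \|B\|_{\cB(\cH)}\|Df\|^0\|f\|^1, \quad f\in\dom(D),
\end{equation*}
so $B$ is $0$-subordinate to $D$. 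Theorem \ref{t5.11} then yields that $D+B$ is closed on $\dom(D)$ with compact resolvent and that its root vectors form a Riesz basis with parentheses in $L^2([0,1];\rho^2dx)^2$.

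The cluster decomposition and the completeness statement $\sum_{\ell}P(\Lambda_{-,\ell},D+B)+\sum_{\ell}P(\Lambda_{+,\ell},D+B)=I$ would be deduced as follows. By Remark \ref{r5.5}\,(i), $\sigma(D+B)$ is contained in the horizontal strip $|\Im(z)|\le\|B\|$, while by Lemma \ref{l5.6}, $\lambda_{\pm,j}(D+B) = \pm j\pi\bigl[\int_0^1\rho\bigr]^{-1}+\oh(|j|)$. Hence away from a bounded region, the eigenvalues split naturally into a ``$+$'' half-plane branch and a ``$-$'' half-plane branch, each of which clusters in disks whose diameters are controlled by the asymptotics. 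Grouping the members of each branch into finite clusters $\Lambda_{\pm,\ell}$ separated by contours $C(\Lambda_{\pm,\ell})$ enclosing disjoint sets of eigenvalues, the associated Riesz projections \eqref{2.15ha} are mutually orthogonal idempotents and their sum converges unconditionally to $I$ in the strong sense by the very definition of a Riesz basis of subspaces \eqref{5.63}--\eqref{5.64}. Finally, the equivalent statements for $G_{T,\alpha/\rho^2}$ and for the quadratic pencil $N(\cdot)$ are immediate from Theorem \ref{t2.4} and Remark \ref{r2.5}, since unitary equivalence and linearization both preserve the Riesz basis property and algebraic multiplicities of nonzero eigenvalues.

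The main obstacle I anticipate is the verification of (c): one must ensure that the Weyl count for $D$ is truly $O(t)$ independently of which of the boundary conditions in $T_{\max,\min,0,1,\omega}$ is imposed, and that the passage from eigenvalues of $T^*T,\,TT^*$ to those of $D$ preserves multiplicities correctly at $0$ (handled via Appendix \ref{sA}). Once this is in hand, the subordination condition for the bounded operator $B$ is essentially free, and Theorem \ref{t5.11} does the remaining work.
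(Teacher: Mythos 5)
Your proposal follows essentially the same route as the paper: identify $N=D$, $R=B$, $r=2$, $p=0$ in Theorem \ref{t5.11}, obtain $\#(t,D)=O(t)$ from the Weyl asymptotics, observe that the bounded perturbation $B$ is trivially $0$-subordinate, and transfer the conclusion to $G_{T,\alpha/\rho^2}$ and the quadratic pencil via Theorem \ref{t2.4}. You verify the hypotheses of Theorem \ref{t5.11} in slightly more explicit detail than the paper (in particular tracing the counting estimate back to \eqref{5.60a} rather than to the asymptotics \eqref{5.59} for $D+B$), but the argument is the same and correct.
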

\begin{proof}
That $D+B$ defined on $\dom(D+B) = \dom(D)$ is a densely defined closed operator with compact resolvent is clear since 
$B\in \cB\big(L^2([0,1]; \rho^2 dx)^2\big)$ and the resolvent of $D$ is compact 
in $L^2([0,1]; \rho^2 dx)^2$. In order to apply Theorem \ref{t5.11} we first 
note that by \eqref{5.58} all eigenvalues of $D+B$ lie in a strip symmetric with 
respect to the real axis and that by \eqref{5.58} and \eqref{5.59} the eigenvalues 
of $D+B$ split up into two sequences converging to $\pm\infty$. 
As a result of the proof of Theorem \ref{t5.11} as described in detail in 
\cite[p.\ 27--37]{Ma88}, one can now apply Theorem \ref{t5.11} in the special case 
where $N=D$ is self-adjoint, $r=2$, $R=B$ is bounded, and one chooses 
$p=0$ since $\#(t,D) = \Oh(t)$ as $t\uparrow \infty$, given the asymptotics 
\eqref{5.59}. 
\end{proof}

We conclude with the following observations:

\begin{remark} \lb{r5.13}
It appears that Hypotheses \ref{h5.1} is the most general set of assumptions 
under which the Riesz basis property with parentheses has been proven for 
$D+B$, or equivalently, $G_{T,\alpha/\rho^2}$ (resp., the quadratic 
pencil $N(z) = z^2 I + z i B - T^*T$, $z \in \bbC$). Under additional 
smoothness assumptions on $\alpha$ and $\rho$, and typically for separated 
boundary conditions in $T^*T$, $TT^*$, and/or in the case of damping at 
the end points $x=0,1$, the Riesz basis property (without parentheses) for 
$G_{T,\alpha/\rho^2}$ has been established in \cite{CZ94}, \cite{CZ95}, 
\cite{GP97}, \cite{Sh96a}, \cite{Sh97}, \cite{Sh97a}, \cite{Sh99}, \cite{Sh01}, 
\cite{SMDB97}.   
In this context we recall that the existence of a Riesz basis (without 
parentheses) for $D+B$, or equivalently, $G_{T,\alpha/\rho^2}$, is 
equivalent to both 
operators being unbounded spectral operators in the sense of Dunford 
(cf.\ \cite[Chs.\ XVIII, XIX]{DS88a}). In the case of the boundary conditions 
$T_{\max, \min, 0,1,\omega}$, $\omega \in \bbC \backslash\{\pm 1\}$, one 
can expect the Riesz basis property (without parentheses) to hold under our 
general assumptions in Hypothesis \ref{h5.1}. For a particular class of 
one-dimensional Dirac-type operators the delicate question of unconditional 
convergence of spectral Riesz expansions has recently been treated in great 
detail by Djakov and Mityagin \cite{DM10}, \cite{DM10a},  \cite{DM10b}, 
\cite{Mi04}, with earlier contributions in \cite{CK96}, \cite{HO09}, \cite{OH06}, 
\cite{TY01}, \cite{TY02}. 

We also note that scalar spectral operators were discussed in connection 
with (generalizations of) damped wave equations by Sandefur \cite{Sa77}. 
\end{remark}

\appendix
\section{Supersymmetric Dirac-Type Operators in a Nutshell} \lb{sA}
\renewcommand{\theequation}{A.\arabic{equation}}
\renewcommand{\thetheorem}{A.\arabic{theorem}}
\setcounter{theorem}{0} \setcounter{equation}{0}

In this appendix we briefly summarize some results on supersymmetric 
Dirac-type operators and commutation methods due to \cite{De78}, 
\cite{GSS91}, \cite{Th88}, and \cite[Ch.\ 5]{Th92} (see also \cite{Ha00}). 

The standing assumption in this appendix will be the following:

\begin{hypothesis} \lb{hA.1}
Let $\cH_j$, $j=1,2$, be separable complex Hilbert spaces and 
\begin{equation}
A: \dom(A) \subseteq \cH_1 \to \cH_2    \lb{A.1}
\end{equation}
be a densely defined closed linear operator. 
\end{hypothesis}

We define the self-adjoint Dirac-type operator in $\cH_1 \oplus \cH_2$ by 
\begin{equation}
Q = \begin{pmatrix} 0 & A^* \\ A & 0 \end{pmatrix}, \quad \dom(Q) = \dom(A) \oplus \dom(A^*).     
\lb{A.2}
\end{equation}
Operators of the type $Q$ play a role in supersymmetric quantum mechanics (see, e.g., the extensive list of references in \cite{BGGSS87}). Then,
\begin{equation}
Q^2 = \begin{pmatrix} A^* A & 0 \\ 0 & A A^* \end{pmatrix}     \lb{A.3}
\end{equation}
and for notational purposes we also introduce
\begin{equation}
H_1 = A^* A \, \text{ in } \, \cH_1, \quad H_2 = A A^* \, \text{ in } \, \cH_2.    \lb{A.4}
\end{equation}
In the following, we also need the polar decomposition of $A$ and $A^*$, that is, the representations
\begin{equation} 
A = V_A |A| = |A^*| V_A, \quad A^* = V_{A^*} |A^*| = |A| V_{A^*},     \lb{A.5}
\end{equation}
where
\begin{equation}
|A| = (A^* A)^{1/2} = H_1^{1/2}, \quad |A^*| = (A A^*)^{1/2} = H_2^{1/2}.     \lb{A.6}
\end{equation}
Here $V_A$ is a partial isometry with initial set $\ol{\ran(|A|)}$ and final set $\ol{\ran(A)}$ and 
$V_{A^*}$ is a partial isometry with initial set $\ol{\ran(|A^*|)}$ and final set $\ol{\ran(A^*)}$. In 
particular,
\begin{equation}
V_A = \begin{cases} \ol{A (A^* A)^{-1/2}} = \ol{(A A^*)^{-1/2}A} & \text{on }  (\ker (A))^{\bot},  \\
0 & \text{on }  \ker (A).  \end{cases}     \lb{A.7}
\end{equation}

Next, we collect some properties relating $H_1$ and $H_2$:
\begin{theorem} [\cite{De78}] \lb{tA.2}  
Assume Hypothesis \ref{hA.1} and let $\phi$ be a bounded Borel measurable 
function.  \\
$(i)$ One has
\begin{align}
& \ker(A) = \ker(H_1) = (\ran(A^*))^{\bot}, \quad \ker(A^*) = \ker(H_2) = (\ran(A))^{\bot},  \lb{A.8} \\
& V_A H_1^{n/2} = H_2^{n/2} V_A, \; n\in\bbN, \quad V_A \phi(H_1) = \phi(H_2) V_A.  \lb{A.9} 
\end{align}
$(ii)$ $H_1$ and $H_2$ are essentially isospectral, that is, 
\begin{equation}
\sigma(H_1)\backslash\{0\} = \sigma(H_2)\backslash\{0\},    \lb{A.10}
\end{equation}
in fact, 
\begin{equation}
A^* A [I_{\cH_1} - P_{\ker(A)}] \, \text{ is unitarily equivalent to } \, A A^* [I_{\cH_2} - P_{\ker(A^*)}]. 
\lb{A.10a}
\end{equation} 
In addition,
\begin{align}
& f\in \dom(H_1) \, \text{ and } \, H_1 f = \lambda^2 f, \; \lambda \neq 0,   \no \\
& \quad \text{implies }  \,  A f \in \dom(H_2) \, \text{ and } \, H_2(Af) = \lambda^2 (A f),    \lb{A.11} \\
& g\in \dom(H_2)\, \text{ and } \, H_2 \, g = \mu^2 g, \; \mu \neq 0,     \no \\
& \quad \text{implies }  \, A^* g \in \dom(H_1)\, \text{ and } \, H_1(A^* f) = \mu^2 (A^* g),    \lb{A.12} 
\end{align}
with multiplicities of eigenvalues preserved. \\
$(iii)$ One has for $z \in \rho(H_1) \cap \rho(H_2)$,
\begin{align}
& I_{\cH_2} + z (H_2 - z I_{\cH_2})^{-1} \supseteq A (H_1 - z I_{\cH_1})^{-1} A^*,    \lb{A.13} \\
& I_{\cH_1} + z (H_1 - z I_{\cH_1})^{-1} \supseteq A^* (H_2 - z I_{\cH_2})^{-1} A,    \lb{A.14}
\end{align}
and 
\begin{align}
& A^* \phi(H_2) \supseteq \phi(H_1) A^*, \quad 
A \phi(H_1) \supseteq \phi(H_2) A,   \lb {A.15}   \\
& V_{A^*} \phi(H_2) \supseteq \phi(H_1) V_{A^*}, \quad 
V_A \phi(H_1) \supseteq \phi(H_2) V_A.    \lb {A.15a}
\end{align}
\end{theorem}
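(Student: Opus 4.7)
The plan is to derive everything from the polar decomposition $A = V_A H_1^{1/2} = H_2^{1/2} V_A$, together with its adjoint $A^* = H_1^{1/2} V_A^* = V_A^* H_2^{1/2}$, and from the spectral theorem applied to the self-adjoint operators $H_1$ and $H_2$. The key structural observation is that $V_A$ restricts to a unitary $(\ker(A))^{\bot} \to (\ker(A^*))^{\bot}$, annihilates $\ker(H_1)$, and has range orthogonal to $\ker(H_2)$. For part $(i)$, the identity $\ker(A) = \ker(H_1)$ follows from $\|Af\|_{\cH_2}^2 = (f, A^*Af)_{\cH_1}$, and $\ker(A) = (\ran(A^*))^{\bot}$ is the standard duality for closed densely defined operators. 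The intertwining $V_A H_1^{n/2} = H_2^{n/2} V_A$ on $\dom(H_1^{n/2})$ is obtained by iterating $V_A H_1^{1/2} = H_2^{1/2} V_A$. To extend this to arbitrary bounded Borel $\phi$, I would establish $V_A E_1(B) = E_2(B) V_A$ for Borel $B \subseteq [0,\infty)$, with $E_j$ the spectral family of $H_j$; for $B \not\ni 0$ this follows from Stone's formula applied to the intertwined resolvents, and for $B \ni 0$ the extra contribution at $0$ vanishes since $V_A \ran(E_1(\{0\})) = V_A \ker(H_1) = \{0\}$ and $P_{\ker(H_2)} V_A = 0$.

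Part $(ii)$ is then an immediate consequence: the unitary $V_A|_{(\ker(A))^{\bot}}\colon (\ker(A))^{\bot} \to (\ker(A^*))^{\bot}$ implements the unitary equivalence of $H_1 [I_{\cH_1} - P_{\ker(A)}]$ and $H_2 [I_{\cH_2} - P_{\ker(A^*)}]$ via the case $n=2$ of the intertwining, yielding \eqref{A.10a} and hence the essential isospectrality \eqref{A.10}. For the eigenvalue correspondence \eqref{A.11}, given $H_1 f = \lambda^2 f$ with $\lambda \neq 0$, I would observe that $A^*(Af) = H_1 f = \lambda^2 f \in \dom(H_1) \subseteq \dom(A)$ places $Af \in \dom(AA^*) = \dom(H_2)$, with $H_2(Af) = A(A^*Af) = \lambda^2(Af)$; the map $A\colon \ker(H_1 - \lambda^2 I_{\cH_1}) \to \ker(H_2 - \lambda^2 I_{\cH_2})$ is injective because $\ker(A) = \ker(H_1)$ is orthogonal to $\ker(H_1 - \lambda^2 I_{\cH_1})$ for $\lambda \neq 0$, and the symmetric argument with $A^*/\lambda^2$ supplies the inverse, giving preservation of multiplicities.

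For part $(iii)$, I would compute on $\dom(A^*)$ using the polar decomposition:
\begin{align*}
A(H_1 - zI_{\cH_1})^{-1} A^*
&= V_A H_1^{1/2} (H_1 - zI_{\cH_1})^{-1} H_1^{1/2} V_A^* \\
&= V_A H_1 (H_1 - zI_{\cH_1})^{-1} V_A^*
= H_2 (H_2 - z I_{\cH_2})^{-1} V_A V_A^*,
\end{align*}
where the middle equality uses spectral calculus on $\dom(H_1^{1/2})$ and the last invokes the Borel intertwining with $\phi(\lambda) = \lambda/(\lambda - z)$. Since $\phi(0) = 0$, the factor $V_A V_A^* = I_{\cH_2} - P_{\ker(A^*)}$ can be absorbed into $\phi(H_2)$, leaving $H_2(H_2 - zI_{\cH_2})^{-1} = I_{\cH_2} + z(H_2 - zI_{\cH_2})^{-1}$ on $\dom(A^*)$ and hence the operator inclusion \eqref{A.13}; \eqref{A.14} is symmetric. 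The intertwining relations \eqref{A.15}, \eqref{A.15a} follow from taking adjoints of $V_A \phi(H_1) = \phi(H_2) V_A$ and pre- or post-multiplying by $H_j^{1/2}$, combined with the fact that bounded Borel $\phi$ preserves $\dom(H_j^{1/2})$ by the spectral theorem. The main obstacle throughout is domain bookkeeping: one must verify at each step that identities such as $H_1^{1/2}(H_1-zI_{\cH_1})^{-1}H_1^{1/2} = H_1(H_1-zI_{\cH_1})^{-1}$ hold on the correct dense domains, and that the final inclusions $\supseteq$ in \eqref{A.13}--\eqref{A.15a} are genuine extensions from densely defined expressions to the corresponding bounded operators on all of $\cH_j$.
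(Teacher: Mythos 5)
Your proof is correct, and it takes a genuinely different organizational route from the paper's. You work directly in the two factor spaces $\cH_1$, $\cH_2$: you exploit the two-sided polar decomposition $A = V_A H_1^{1/2} = H_2^{1/2} V_A$ to obtain the resolvent intertwining $V_A (H_1 - z I_{\cH_1})^{-1} = (H_2 - z I_{\cH_2})^{-1} V_A$, upgrade it to intertwining of the spectral measures by Stone's formula (handling the possibly exceptional point $z=0$ separately via $V_A P_{\ker(H_1)} = 0$ and $P_{\ker(H_2)} V_A = 0$), and then deduce every item by pushing spectral-calculus identities through the partial isometry $V_A$. The paper instead follows an argument attributed to E.\ Nelson that is phrased entirely in the ambient space $\cH_1 \oplus \cH_2$: one passes to the supersymmetric self-adjoint operator $Q$ of \eqref{A.2}, notes that $Q^2 = H_1 \oplus H_2$ is block-diagonal while $Q$ and $V_Q$ of \eqref{A.20} are block-off-diagonal, and then the entire theorem is read off as the $2 \times 2$ block content of the four one-operator identities \eqref{A.16}--\eqref{A.19}; for instance, the off-diagonal blocks of $Q\phi(Q^2) \supseteq \phi(Q^2)Q$ yield \eqref{A.15} at once, those of $V_Q \phi(Q^2) = \phi(Q^2) V_Q$ yield \eqref{A.9} and \eqref{A.15a}, those of \eqref{A.18} yield \eqref{A.13}--\eqref{A.14}, and \eqref{A.17} together with the block structure of $V_Q$ yields \eqref{A.8}, \eqref{A.10a}. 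What the $Q$-based route buys is that the spectral theorem is invoked only once, for the single self-adjoint $Q$, so the intertwining of $H_1$ and $H_2$ that you build by hand is encoded automatically in the diagonal/off-diagonal block structure; what your route buys is that the mechanism --- $V_A$ restricting to a unitary from $(\ker A)^{\bot}$ onto $(\ker A^*)^{\bot}$ that conjugates the restricted $H_1$ to the restricted $H_2$ --- is made fully explicit, at the cost of somewhat more domain bookkeeping, which you correctly flag as the main point requiring care.
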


As noted by E.\ Nelson (unpublished), Theorem \ref{tA.2} follows from the spectral theorem and the 
elementary identities, 
\begin{align}
& Q = V_Q |Q| = |Q| V_Q,    \lb{A.16} \\
& \ker(Q) = \ker(|Q|) = \ker (Q^2) = (\ran(Q))^{\bot},    \lb{A.17}  \\
& I_{\cH_1 \oplus \cH_2} + z (Q^2 - z I_{\cH_1 \oplus \cH_2})^{-1} 
= Q^2 (Q^2 -z I_{\cH_1 \oplus \cH_2})^{-1} \supseteq Q (Q^2 -z I_{\cH_1 \oplus \cH_2})^{-1} Q,  \no \\
& \hspace*{9.3cm}   z \in \rho(Q^2),    \lb{A.18} \\
& Q \phi(Q^2) \supseteq \phi(Q^2) Q,   \lb{A.19}
\end{align}
where
\begin{equation}
V_Q = \begin{pmatrix} 0 & (V_A)^* \\ V_A & 0 \end{pmatrix}
= \begin{pmatrix} 0 & V_{A^*} \\ V_A & 0 \end{pmatrix}.   \lb{A.20}
\end{equation}

In particular,
\begin{equation}
\ker(Q) = \ker(A) \oplus \ker(A^*), \quad  
P_{\ker(Q)} = \begin{pmatrix} P_{\ker(A)} & 0 \\ 0 & P_{\ker(A^*)} \end{pmatrix},     \lb{A.21}
\end{equation}
and we also recall that
\begin{equation}
\sigma_3 Q \sigma_3 = - Q, \quad \sigma_3 = \begin{pmatrix} I_{\cH_1} & 0 \\ 0 & - I_{\cH_2} 
\end{pmatrix},     \lb{A.22}
\end{equation}
that is, $Q$ and $-Q$ are unitarily equivalent. 

Finally, we note the following relationships between $Q$ and $H_j$, $j=1,2$:

\begin{theorem} [\cite{BGGSS87}, \cite{Th88}] \lb{tA.3}
Assume Hypothesis \ref{hA.1}. \\
$(i)$ Introducing the unitary operator $U$ on $(\ker(Q))^{\bot}$ by
\begin{equation}
U = 2^{-1/2} \begin{pmatrix} I_{\cH_1} & (V_A)^* \\  - V_A & I_{\cH_2} \end{pmatrix} 
\, \text{ on } \,  (\ker(Q))^{\bot},     \lb{A.23}
\end{equation}
one infers that
\begin{equation}
U Q U^{-1} = \begin{pmatrix}  H_1^{1/2} & 0 \\ 0 & - H_2^{1/2} \end{pmatrix} 
\, \text{ on } \,  (\ker(Q))^{\bot}.     \lb{A.24}
\end{equation}
$(ii)$ One has
\begin{align}
\begin{split}
(Q - \zeta I_{\cH_1 \oplus \cH_2})^{-1} = \begin{pmatrix} \zeta (H_1 - \zeta^2 I_{\cH_1})^{-1} 
& A^* (H_2 - \zeta^2 I_{\cH_2})^{-1}  \\  A (H_1 - \zeta^2 I_{\cH_1})^{-1}  & 
\zeta (H_2 - \zeta^2 I_{\cH_2})^{-1}  \end{pmatrix},&    \\
\zeta^2 \in \rho(H_1) \cap \rho(H_2).&   \lb{A.25}
\end{split}
\end{align}
$(iii)$ In addition, 
\begin{align}
\begin{split} 
& \begin{pmatrix} f_1 \\ f_2 \end{pmatrix} \in \dom(Q) \, \text{ and } \, 
Q \begin{pmatrix} f_1 \\ f_2 \end{pmatrix} = \eta \begin{pmatrix} f_1 \\ f_2 \end{pmatrix}, \; \eta \neq 0,  
 \\
& \quad \text{ implies } \, f_j \in \dom (H_j) \, \text{ and } \, H_j f_j = \eta^2 f_j, \; j=1,2.    \lb{A.26}
\end{split} 
\end{align}
Conversely,
\begin{align}
\begin{split} 
& f \in \dom(H_1) \, \text{ and } H_1 f = \lambda^2 f, \; \lambda \neq 0, \\
& \quad \text{implies } \, \begin{pmatrix} f \\ \lambda^{-1} A f \end{pmatrix} \in \dom(Q) \, \text{ and } \, 
Q \begin{pmatrix} f \\ \lambda^{-1} A f \end{pmatrix} 
= \lambda \begin{pmatrix} f \\ \lambda^{-1} A f \end{pmatrix}.    \lb{A.29}
\end{split} 
\end{align}
Similarly,
\begin{align}
\begin{split} 
& g \in \dom(H_2) \, \text{ and } H_2 \, g = \mu^2 g, \; \mu \neq 0, \\
& \quad \text{implies } \, \begin{pmatrix} \mu^{-1} A^* g \\ g \end{pmatrix} \in \dom(Q) \, \text{ and } \, 
Q \begin{pmatrix} \mu^{-1} A^* g \\ g \end{pmatrix} 
= \mu \begin{pmatrix} \mu^{-1} A^* g \\ g \end{pmatrix}.   \lb{A.30}
\end{split} 
\end{align}
\end{theorem}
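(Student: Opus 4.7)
The plan is to handle the three parts of Theorem \ref{tA.3} essentially by direct verification, using the supersymmetric structure, the polar decompositions \eqref{A.5}, and the identities already established in Theorem \ref{tA.2}.

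For part $(i)$, I would first restrict everything to the reducing subspace $(\ker(Q))^\bot = (\ker(A))^\bot \oplus (\ker(A^*))^\bot$ (cf.\ \eqref{A.17}, \eqref{A.21}). On this subspace, the partial isometry $V_A$ restricts to a unitary map from $(\ker(A))^\bot$ onto $\overline{\ran(A)} = (\ker(A^*))^\bot$, so $V_A^* V_A = I_{\cH_1}$ and $V_A V_A^* = I_{\cH_2}$ when read on the respective complements. I would then check that $U$ is unitary on $(\ker(Q))^\bot$ by a block computation of $U^*U$ and $UU^*$ using these relations. Next, using $A = V_A |A| = V_A H_1^{1/2}$ and $A^* = |A| V_A^* = H_1^{1/2} V_A^*$ together with the intertwining $V_A H_1^{1/2} = H_2^{1/2} V_A$ from \eqref{A.9}, I would compute
\begin{equation}
UQU^{-1} = \tfrac{1}{2}\begin{pmatrix} I_{\cH_1} & V_A^* \\ -V_A & I_{\cH_2}\end{pmatrix}
\begin{pmatrix} 0 & H_1^{1/2} V_A^* \\ V_A H_1^{1/2} & 0 \end{pmatrix}
\begin{pmatrix} I_{\cH_1} & -V_A^* \\ V_A & I_{\cH_2}\end{pmatrix},
\end{equation}
and simplify the resulting blocks; the off-diagonal entries cancel, while the diagonal entries collapse to $H_1^{1/2}$ and $-H_2^{1/2}$ after invoking \eqref{A.9} once more.

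For part $(ii)$, the cleanest route is to use the supersymmetric identity $Q^2 = H_1 \oplus H_2$ from \eqref{A.3}. Since the matrix on the right-hand side of \eqref{A.25} can be rewritten as $(Q + \zeta I_{\cH_1\oplus\cH_2})(Q^2 - \zeta^2 I_{\cH_1\oplus\cH_2})^{-1}$ on suitable domains (which is meaningful because $\zeta^2 \in \rho(H_1)\cap\rho(H_2)$ is exactly the condition that $(Q^2-\zeta^2 I)^{-1}$ exists and is bounded), the verification reduces to the factorization $(Q-\zeta I)(Q+\zeta I)(Q^2-\zeta^2 I)^{-1} = I$. The only subtle point is domain tracking in the off-diagonal entries $A^*(H_2-\zeta^2 I_{\cH_2})^{-1}$ and $A(H_1-\zeta^2 I_{\cH_1})^{-1}$; here the intertwining relations $A\phi(H_1)\supseteq \phi(H_2)A$ and $A^*\phi(H_2)\supseteq \phi(H_1)A^*$ from \eqref{A.15} allow one to move $A$ or $A^*$ past the resolvents when multiplying out $(Q-\zeta I)\cdot \mathrm{RHS}$, and the collapse $A^*A = H_1$, $AA^* = H_2$ produces telescoping identities that yield $I_{\cH_1\oplus\cH_2}$.

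Part $(iii)$ is the easiest: it is a pure block-matrix computation. If $Q(f_1,f_2)^\top = \eta(f_1,f_2)^\top$ then by definition $f_1\in\dom(A)$, $f_2\in\dom(A^*)$, and $A^*f_2 = \eta f_1$, $A f_1 = \eta f_2$. Applying $A$ to the first equation and $A^*$ to the second and using $\eta\neq 0$ gives $f_j \in \dom(H_j)$ with $H_j f_j = \eta^2 f_j$. Conversely, given $H_1 f = \lambda^2 f$ with $\lambda\neq 0$, one checks that $\lambda^{-1}Af\in\dom(A^*)$ (because $A^*(\lambda^{-1}Af) = \lambda^{-1}H_1 f = \lambda f \in \cH_1$, so $\lambda^{-1}Af$ is in $\dom(A^*)$ by a standard argument using closedness of $A^*$ together with the fact that $Af \in \ran(A)\subseteq \dom(A^*)$ once we know $A^*Af$ exists), and then \eqref{A.29} follows by evaluating $Q$ on the pair $(f,\lambda^{-1}Af)^\top$; the analog \eqref{A.30} is symmetric.

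The one place where care is genuinely needed is the domain bookkeeping throughout — in particular, confirming that the partial isometries $V_A, V_A^*$ indeed act as unitaries on the respective orthogonal complements in part $(i)$, and that the intertwining relations \eqref{A.15} are applied only in the direction $\supseteq$ is compatible with the direction of the computation in part $(ii)$. Beyond those bookkeeping issues, the proof is a sequence of mechanical block-matrix verifications built on the spectral theorem for $H_1$, $H_2$ and the polar decompositions of $A$ and $A^*$.
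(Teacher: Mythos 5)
Your proposal is correct, and the paper itself offers no proof of Theorem \ref{tA.3} (it simply cites \cite{BGGSS87} and \cite{Th88}); your direct block-matrix verification based on the polar decomposition \eqref{A.5}, the intertwining relations \eqref{A.9}, and the factorization $(Q+\zeta I)(Q^2-\zeta^2 I)^{-1}$ is exactly the standard argument encoded in the ``Nelson identities'' \eqref{A.16}--\eqref{A.22} that the paper invokes for Theorem \ref{tA.2}. One cosmetic remark on part $(iii)$: the inclusion $\ran(A)\subseteq\dom(A^*)$ you appeal to is false in general, but it is also unnecessary --- $f\in\dom(H_1)=\dom(A^*A)$ means by definition of the operator product that $f\in\dom(A)$ and $Af\in\dom(A^*)$, which is all that is needed.
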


\medskip

\noindent {\bf Acknowledgments.}
We are indebted to Steve Cox for very stimulating discussions on this topic and for  initiating our interest in this problem. Moreover, we are indebted to 
Sergei Avdonin, Nigel Kalton, Mark Malamud, Alexander Markus, Andrei Shkalikov, 
Gerald Teschl, Vadim Tkachenko, Yuri Tomilov, Carsten Trunk, and Christian Wyss for very valuable correspondence on various topics on non-self-adjoint spectral problems. 

This paper was partly written when taking part in the international research program on Nonlinear Partial Differential Equations at the Centre for Advanced Study (CAS) at the Norwegian Academy of Science and Letters in Oslo during the academic year 2008Ð-09. F.G.\ gratefully acknowledges the great hospitality at CAS during his five-week stay in May--June, 2009.


\end{document}